\documentclass[11pt,twoside]{article}
\usepackage[english]{babel}
\usepackage{amssymb}
\usepackage[mathscr]{eucal}
\usepackage{amsmath,amsthm}
\usepackage{mathrsfs}
\usepackage{braket}
\usepackage{lgreek}
\usepackage[shortlabels]{enumitem}
\usepackage[colorlinks=true,
   urlcolor=blue,           filecolor=green,      
   citecolor=green,      
   linkcolor=red,           bookmarks=true,
  unicode,
   plainpages=false,   ]{hyperref}
\usepackage{color}

\definecolor{royalblue}{rgb}{0,0,0.128}
\def\bp{\begin{proof}}
\def\ep{\end{proof}}
\def\n{\nabla}
\def\ssfrac#1#2{\mbox{\large$\frac{#1}{#2}$}}
\def\sfrac#1#2{\mbox{\Large$\frac{#1}{#2}$}}

\def\intll#1#2{\int\limits_{#1}^{#2}}
\def\dm {|\hskip-0.05cm|}
\def\tm{|\hskip-0.05cm|\hskip-0.05cm|}

\def\displ{\displaystyle}
\def\VSE{\vspace{6pt}\\&\displ }
\def\VS{\vspace{6pt}\\\displ }
\def\VSs{\vspace{4pt}\\\displ}
\def\rf#1{{\rm(\ref{#1})}}

\newcommand{\R}{\mathbb{R}}
\newcommand{\N}{\mathbb{N}}

\def\à{à}
\def\vf{\varphi}

\def\dy{\displaystyle}
\def\vep{\varepsilon}

\def\be{\begin{equation}}
\def\ba{\begin{array}}
\def\ea{\end{array}}
\def\ee{\end{equation}}
\def\vs1{\vspace{1ex}}
\def\vp{\varphi}
\def\ov{\overline}

\def\Ã©{\'{e}}
\def\Ãš{\`{e}}
\newtheorem{lemma}
{\bf Lemma} 
\font\sc=cmcsc10
\title{\Large\bf The IBVP for the Navier-Stokes equations in the half-space in a class of weighted Lebesgue spaces}
\author{\sc  Angelica Pia Di Feola and Vittorio Pane\thanks{Dipartimento di Matematica e Fisica,  
Universit\`{a} degli 
Studi della Campania
``L. Vanvitelli'', via Vivaldi 43, 81100 \null\hskip0.55cmCaserta,
 Italy.\newline\null\hskip0.55cm
vittorio.pane2@unicampania.it
 \newline \null\hskip0.55cm 
angelicapia.difeola@unicampania.it}}
\date{\today}
\begin{document}
\markboth{\footnotesize\rm  A. P. Di Feola, V. Pane}{\footnotesize\rm
The IBVP for the Navier-Stokes equations in the half-space in a class of weighted Lebesgue spaces}
\maketitle
\noindent
\newcommand{\red}{\protect\bf}
\renewcommand\refname{\centerline
{\red {\normalsize \bf References}}}
\newtheorem{ass}
{\bf Assumption} 
\newtheorem{defi}
{\bf Definition} 
\newtheorem{theorem}
{\bf Theorem} 
\newtheorem{rem}
{\sc Remark} 
\newtheorem{coro}
{\bf Corollary} 
\newtheorem{prop}
{\bf Proposition} 
\renewcommand{\theequation}{\arabic{equation}}
\setcounter{section}{0}
\section{Introduction}

In the present work, we investigate the initial-boundary value problem (IBVP) for the Navier–Stokes equations in the half-space with initial data belonging to a suitable weighted Lebesgue space. More precisely, denoting by $\mathbb{R}^n_{+}$ the $n$-dimensional upper half-space, with $n\geq 3$, our aim is to study the following problem:
\begin{equation}\label{problem}
\ba{ll}
u_t - \Delta u = - u \cdot \nabla u - \nabla \pi  \,,\; &\text{in}\,\, (0,T) \times \R^n_+\,, \VSs
\nabla \cdot u=0\,, &\text{in}\,\, (0,T) \times \R_+^n \,,\VSs
u = 0\,, &\text{in}\,\, (0,T) \times \{x_n=0\} , \VS
u(0,x)=u_0(x)\,, \; &\text{on}\,\, \{0\} \times \R_+^n\,,
\ea
\end{equation}
under the assumptions that $u_0 \in L^p_{\texttt{w}}(\mathbb{R}^n_{+})$, and $\nabla \cdot u_0=0$ in weak sense. The weight function $\texttt{w}$ and the corresponding weighted Lebesgue space $L^p_{\texttt{w}}(\mathbb{R}^n_{+})$ are defined as follows.\par
Let $p>n$, and $\alpha=1-\frac{n}{p}$. Moreover, for $m\in \N$, let $\{\alpha_j\}_{j=1,\dots,m}$ be such that $\alpha_j\geq 0$ for all $j\in\{1,\dots,m\}$ and $\displaystyle \sum_{j=1}^m\alpha_j=\alpha$. By $\texttt{w}(x)$ we denote the following weight function
\be \label{wf}
\texttt{w}(x)=\prod_{j=1}^m|x-\bar{x}_j|^{\alpha_j}\,,
\ee
where $\bar{x}_j$, for $j=1,\dots,m$, are fixed (not necessarily distinct) points in $\R^n_+$. We consider the following weighted Lebesgue space:
\begin{equation}\label{DSP}
L^p_{\texttt{w}}(\R^n_+):=\{u\, :\, \texttt{w}(x)\,u \in L^p(\R^n_+)\}\,,
\end{equation}
endowed with the natural norm 
$$\dm   u\dm _{L^p_{\texttt{w}}(\R^n_+)}:=\dm u\,\texttt{w}\dm _{L^p(\R^n_+)}\,.$$
Where there is no ambiguity, we will indicate $\dm   \cdot \dm _{L^p_{\texttt{w}}(\R^n_+)} := \dm   \cdot \dm_{\texttt{w},p} $ and $\dm   \cdot \dm _{L^p(\R^n_+)}: = \dm   \cdot \dm _{p}$.\par
The choice of this class of weight functions is motivated by the fact that the corresponding metric is a particular case of a scale-invariant metric. The significance of this scale-invariant structure is reflected in the properties established on the solution and its derivatives, as will become clear below.\par
In the literature, several studies have been devoted to the Navier–Stokes equations in weighted functional spaces. We briefly mention some examples, without attempting to provide an exhaustive list.\par
In \cite{KK0}, after establishing several weighted $L^p$-$L^q$ estimates for the Stokes semigroup in exterior domains and perturbed half-spaces, for $1<p\leq q<\infty$, the authors apply these results to derive temporal asymptotic estimates for solutions to the Navier–Stokes equations. The weight considered therein is of the form $\overline{w}(x)=\langle x\rangle^{sp}=(1+|x|^2)^{\frac{sp}{2}}$, with $1<p<\infty$ and $0\leq s<(n-1)\left(1-\frac{1}{p}\right)$.\par
In the same spirit, in \cite{KK1}, the authors deal with the half-space setting and establish analogous asymptotic-in-time estimates for solutions to the Navier–Stokes equations in weighted spaces associated with a different class of weights. More precisely, the authors consider the space $L^p_{\omega}(\mathbb{R}^n_+)$, where $\omega(x)=\langle x'\rangle^{s_1}\langle x_n\rangle^{s_n}$ for $x=(x',x_n)\in\mathbb{R}^n_+$, with suitable exponents $s_1$ and $s_n$. In this framework, they derive weighted large-time asymptotic estimates for solutions to the Navier–Stokes equations.\par
The use of weighted functions and weighted integrability assumptions is not restricted to the framework considered here and appears in several other problems. For instance, in \cite{farwingarx}, the authors introduce a new approach based on a radially symmetric weight function to study the time-periodic Navier-Stokes problem. Moreover, in \cite{MarPalma}, concerning the interaction problem between a rigid body and a viscous Newtonian fluid, weighted integrability assumption on the gradient of the initial data are imposed in order to establish uniqueness of regular solutions.\par
The present investigation is part of a broader study started in \cite{GM} and later developed in \cite{MP}. In the first work, the authors study perturbations of a stationary state and prove an asymptotic stability result, showing in particular that the pointwise space-time behavior of the perturbations is fully controlled by the size of the initial data. In the second one, the authors consider the Navier--Stokes Cauchy problem and, for perturbations of the rest state, extend the results of \cite{GM} to the $n$-dimensional setting. We note that both of the aforementioned papers deal with the scale-invariant weighted Lebesgue space $L^p_\alpha$ corresponding to the radial weight $|x|^\alpha$, where $\alpha = 1 - \frac{n}{p}$, $p>n\ge3$.\par
The aim of the present work is to generalize the results of \cite{MP} to the half-space setting within a new functional framework, by considering problem \eqref{problem}. Our analysis relies on the results in \cite{DFP}, where the IBVP for the Stokes system\footnote{For further details on this problem in weighted space setting, we refer the reader to \cite{farsohr},\cite{FroII},\cite{KK0}, \cite{KK1} and the references therein.} in the half-space is studied with initial data in $L^p_{\texttt{w}}(\mathbb{R}^n_{+})$, where, with $\texttt{w}(x)$ we indicate the weight function defined in \eqref{wf}. The authors prove existence and uniqueness, establish $L^q$–$L^p_{\texttt{w}}$ semigroup estimates for $q > n$, and derive space-time asymptotic behavior of the solutions (see Theorem\,\ref{Theoremprincipale} and Corollary\,\ref{andamenti_asintotici}).\par
Here, relying on the theory developed in \cite{KS}, \cite{solo}, \cite{soln} and \cite{Solonn} we looking for a mild solution $(u,\pi)$ to problem \eqref{problem} in the form \eqref{Soluzione} and \eqref{Soluzionepres}. This is achieved via a successive approximation method and by introducing a suitable metric defined in \eqref{tremazze}.
  Consequently,  $L^q$-estimates for $q\in(n,\infty]$ and  pointwise space-time estimates for the solution thus constructed are obtained.\par
 Since the initial data are allowed to have arbitrary size, the corresponding existence interval is local in time. Nevertheless, by adapting the method introduced by Crispo and Maremonti in \cite{CM-L3}, and later exploited in \cite{MP}, we establish a qualitative relation between the interval of existence and the properties of the initial data. More precisely, this relation is obtained through the absolute continuity property of the integral. As a consequence, the argument provides only a qualitative description of the dependence of the existence time on the initial data, and does not yield a quantitative estimate of the instant $T$ in terms of their size. \par
In order to state our main result, we recall some definitions. Indicated by $p'$ the conjugate exponent of $p$ and setting $\alpha'_j:=-\alpha_j$,  $\alpha'=-\alpha$, we define
\begin{equation}\label{dwf}
    \texttt{w}'(x)=\prod_{j=1}^m|x-\bar{x}_j|^{\alpha'_j}\,.
\end{equation} 
The corresponding weighted space to the weight function \eqref{dwf}, say $L^{p'}_{\texttt{w}'}(\Omega)$, is the dual space \footnote{The reader is referred to \cite{Kuf} for further details.}  of $L^p_{\texttt{w}}(\Omega)$.\par
Now, we recall the definition of Lorentz spaces\footnote{\,For a wider background, we refer
the reader {\it e.g.} to \cite{Hunt}, \cite{PKJF-LS} .}. Let $\Theta \subseteq \R^n$ measurable, and $g(x)$ be a (Lebesgue) measurable function, we denote by $\mu_g$ the distribution function of $g$, that is:
\[\mu_g(\lambda):=meas(\{x\in \Theta\,:\, |g(x)|>\lambda\})\,,\quad \text{for all}\,\, \lambda>0,\]
and we define the non-increasing rearrangement of $g$ as:
\[g^*(t):=\inf\{\lambda\,:\, \mu_g(\lambda)\leq t\}\,, \quad \text{for}\,\, t\in[0,\infty).\]
Now, assume that $0<p\leq q \leq \infty$. The Lorentz space $L^{p,q}(\Theta)$ is the collection of all measurable functions $g(x)$ such that $\dm g  \dm _{L^{p,q}(\Omega)}<\infty$, where:
\begin{equation*}
\dm g  \dm _{L^{p,q}(\Theta)}=
\begin{cases}
\bigg(\mbox{\Large$ \int$}_0^{\infty}[t^{\frac{1}{p}}g^*(t)]^q\,  \frac{dt}{t}\bigg)^{\frac{1}{q}}\quad &\text{if}\,\, 0<q<\infty,\\
\displaystyle\sup_{t\in (0,\infty)} t^{\frac{1}{p}}g^*(t) \quad &\text{if}\,\, q=\infty.
\end{cases}
\end{equation*}
In particular, as it is well known, for $p\in \left[1, \infty\right]$, $L^{p,p}(\Theta) = L^p(\Theta)$. Moreover, if $p\in \left[1, \infty \right]$ and $0<q_1\leq q_2\leq \infty$, $L^{p,q_1}(\Theta) \subset L^{p,q_2}(\Theta)$. In what follows,  we will indicate $\dm   \cdot \dm _{L^{p,q}(\Theta)} := \dm   \cdot \dm _{(p,q)}\,$.
\par
In the following, with the symbol $X(p)$ we denote the following spaces:
$$X(p):=\left\{\hskip-0.5cm\ba{lll}&L^{p'}_{\texttt{w}'}(\R^n_+)\cap L^{q'}(\R^n_+)\,,q\in(p,2n)\,,&\mbox{for } p\in(n,2n)\,,\VSE L^{p'}_{\texttt{w}'}(\R^n_+)\cap L^{\frac n{n-1},1}(\R^n_+)\,,&\mbox{for }p=2n\,,\VSE L^{p'}_{\texttt{w}'}(\R^n_+)\cap L^{\frac{np}{p(n-2)+2n},1}(\R^n_+)\,,&\mbox{for }p>2n\,.\ea\right.$$\par 
Let $a \in L^p_{\texttt{w}}(\R^n_+)$. For $t>0$ and $\rho>0$, we set
\be \label{krho}
K_a(t,\rho):= c_0 \dm a \dm_{\texttt{w},p,\rho} + c_1 e^{-\frac{\rho^2}{8t}} \dm a \dm_{\texttt{w},p},
\ee
 where
 \be \label{normarho}
\dm a \dm_{\texttt{w},p,\rho}:=\sup_{x\in \R^n_+}\left[\int_{B_\rho(x)} |a(y)|^p \texttt{w}^p(y)\, dy \right]^{\frac{1}{p}},
\ee
and $B_\rho(x)$ is the semisphere of radius $\rho>0$ centered in $x$. \par
We make use of the following metric: for $q>n$, we set
\be \label{tremazze}
\tm u(t)\tm:=t^{\frac n{2p}}\dm u(t)|x|^{\alpha}\dm_\infty+t^\frac12\dm u(t)\dm_\infty+ t^{\frac n2\left(\frac1n-\frac1q\right)}\dm u(t)\dm_q + t^{\frac n2\left(\frac1n-\frac1q \right)+\frac12}\dm \n u(t)\dm_q\,.
\ee \par
In literature, for an initial datum  in belongs to usual $L^p$-spaces, it has been proved the existence of a solution for system \eqref{problem} in the form 
\be 
u(t,x) =\int_{\mathbb{R}^n_+}
\mathcal{G}(t-\tau,x,y)\, u_0 (y) \,dy - \int_0^t \int_{\mathbb{R}^n_+}
\mathcal{G}(t-\tau,x,y)\,P [u \cdot \n u(\tau,y)]\,dy\,d\tau ,\label{Soluzione}
\ee
\small{\be \pi (t,x) \!=\!\int_{\mathbb{R}^n_+}\!\!\!\!\!\!
\mathcal{Q}(t-\tau,x,y)\cdot u_0 (y) \,dy - \int_0^t \! \!\!\int_{\mathbb{R}^n_+}\!\!\!\!\!\!
\mathcal{Q}(t-\tau,x,y)\cdot P [u \cdot \n u(\tau,y)]\,dy\,d\tau +  \int_{\mathbb{R}^n_+}\!\! \!\!\!\!\n_y \mathcal{N}(x,y) \cdot(u\cdot \n u) (t,y) \, dy,\label{Soluzionepres}
\ee}
where $\mathcal{G}$, $\mathcal{Q}$, $ \mathcal{N}$ are defined in \eqref{Gkernel}, \eqref{Q} and \eqref{N}, respectively.\par

The goal of this paper is to establish the following results.

\begin{theorem}\label{esistenza}
{\sl 
Let $u_0 \in L^p_{\texttt{w}}(\R^n_+)$ and divergence free in weak sense. There exists an instant $T:=T(u_0)$ such that problem \eqref{problem} admits a solution $(u,\pi)$, given by \eqref{Soluzione} and \eqref{Soluzionepres}, with $u$ smooth for all $t \in (0,T)$, such that 
  for all $q>n$, 
\begin{equation}\label{andamento_sol_q}
t^\frac n{2p}\dm u(t)|x|^\alpha\dm_\infty+t^{\frac{1}{2}}\dm u(t)\dm _{\infty}+t^{\frac{n}{2}(\frac{1}{n}-\frac{1}{q})}\dm u(t)\dm _q + t^{\frac{n}{2}(\frac{1}{n}-\frac{1}{q})+\frac{1}{2}}\dm \n u(t)\dm _q \leq K(t,\rho)+  c\dm u_0\dm _{\texttt{w},p}^{1-\gamma}K(t,\rho)^{\gamma},
\end{equation}
 for all $t\in(0,T)$, where $\gamma=1-\frac{n}{q}$. 
The following limit properties hold:
\begin{equation}\label{limite_linfinito_soluzione}
\lim_{t \to 0^+} \tm u(t) \tm=0\,, 
\end{equation} \be \label{weak_convergence1}
\lim_{t\to 0^+}\,(u(t),\varphi)=(u_0,\varphi) \,, \text{ for all } \varphi \in  X(p)\,.
\ee
Moreover, concerning the pressure term, for any fixed time $\varepsilon>0$ and for all $r>1$, we obtain
\begin{equation}\label{stime_pressione_thm_principale}
 \dm \n 
\pi_{u } \dm_{L^r(\eta, T; L^q(\R^n_+))} \leq c(r,\varepsilon,\eta, T) \left(   K(\varepsilon,\rho)+  c\dm u_0\dm _{\texttt{w},p}^{1-\gamma}K(\varepsilon,\rho)^{\gamma}\right)\left(  1+ K(\varepsilon,\rho)+  c\dm u_0\dm _{\texttt{w},p}^{1-\gamma}K(\varepsilon,\rho)^{\gamma}\right)\,, 
\end{equation}
for all $\eta >\varepsilon$, where $c$ is a constant independent of $u$.
Finally, if the norm $\dm u_0\dm _{\texttt{w},p}$ is suitably small, then above results hold for all $t>0$.}
\end{theorem}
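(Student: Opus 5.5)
The plan is to build the solution by successive approximations in the metric \eqref{tremazze}, using the linear theory of \cite{DFP} (Theorem~\ref{Theoremprincipale} and Corollary~\ref{andamenti_asintotici}) as the basic input. Set $u^0(t,x):=\int_{\R^n_+}\mathcal G(t,x,y)u_0(y)\,dy$ and
$$u^{k+1}(t,x):=u^0(t,x)-\int_0^t\!\!\int_{\R^n_+}\mathcal G(t-\tau,x,y)\,P[u^k\cdot\n u^k](\tau,y)\,dy\,d\tau .$$
The first step is the linear estimate for $u^0$. From the $L^q$--$L^p_{\texttt{w}}$ semigroup estimates of \cite{DFP} we expect $\tm u^0(t)\tm\le c\dm u_0\dm_{\texttt{w},p}$. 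To obtain the sharper bound by $K(t,\rho)$, we split the kernel integral into the part over $B_\rho(x)$ and the part over its complement. On $B_\rho(x)$ the Gaussian-type bounds on $\mathcal G$ and H\"older's inequality with the weight $\texttt{w}$ (using that $\alpha=1-n/p$ makes everything scale-invariant) give $c_0\dm u_0\dm_{\texttt{w},p,\rho}$. Off $B_\rho(x)$ we use $e^{-|x-y|^2/(4t)}\le e^{-\rho^2/(8t)}e^{-|x-y|^2/(8t)}$ to get $c_1e^{-\rho^2/(8t)}\dm u_0\dm_{\texttt{w},p}$. The $L^q$ and $\n u$ terms do not follow directly from this splitting. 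For them we interpolate between the $L^\infty$-type bound, which is controlled by $K$, and an $L^n$-type bound, which is controlled by $\dm u_0\dm_{\texttt{w},p}$. This should produce the factor $\dm u_0\dm^{1-\gamma}_{\texttt{w},p}K^\gamma$ with $\gamma=1-n/q$, which is the right-hand side of \eqref{andamento_sol_q}.

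The second step is the nonlinear estimate. For each of the four terms in $\tm\cdot\tm$ we estimate the Duhamel term, writing $P[u\cdot\n u]=P\,\n\cdot(u\otimes u)$ where needed. We use the pointwise bounds on $\mathcal G$ and its derivatives, together with bounds on $P$ in $L^q$. For the weighted sup norm we use $|x|^\alpha\le c(|x-y|^\alpha+|y|^\alpha)$, so that the $t^{n/(2p)}\dm u|x|^\alpha\dm_\infty$ piece of the metric enters. The aim is the quadratic bound
$$\tm u^{k+1}(t)\tm\le \tm u^0(t)\tm+c\,\Big(\sup_{s<t}\tm u^k(s)\tm\Big)^2,$$
where every time integral is of Beta-function type $\int_0^t(t-\tau)^{-a}\tau^{-b}d\tau=c\,t^{1-a-b}$ and the exponents cancel by scale invariance. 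The corresponding difference estimate gives contraction. It then suffices to make $\sup_{t<T}\tm u^0(t)\tm$ small.

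The main obstacle is to show smallness for an arbitrary datum; this is the Crispo--Maremonti argument of \cite{CM-L3,MP}. By absolute continuity of the integral, $\dm u_0\dm_{\texttt{w},p,\rho}\to0$ as $\rho\to0$, uniformly in $x$. This needs a little care, since $u_0\texttt{w}\in L^p$ yields uniform smallness on small balls. First choose $\rho$ so that $c_0\dm u_0\dm_{\texttt{w},p,\rho}$ is small, then choose $T$ so that $c_1e^{-\rho^2/(8T)}\dm u_0\dm_{\texttt{w},p}$ is small. This makes $K(t,\rho)$ small on $(0,T)$, and hence so is $K+c\dm u_0\dm^{1-\gamma}K^\gamma$. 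The argument is only qualitative, so $T=T(u_0)$. The same argument gives \eqref{limite_linfinito_soluzione}, because $K(t,\rho)\to c_0\dm u_0\dm_{\texttt{w},p,\rho}$ as $t\to0$ and $\rho$ is arbitrary. If instead $\dm u_0\dm_{\texttt{w},p}$ is small, the bound $\tm u^0\tm\le c\dm u_0\dm_{\texttt{w},p}$ holds uniformly in $t$, which gives global existence.

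For \eqref{weak_convergence1}, the linear part converges by \cite{DFP}. For the Duhamel part we pair with $\varphi\in X(p)$ and use the bounds in $\tm\cdot\tm$ to show it is $o(1)$. The exponents defining $X(p)$ are exactly those that make this H\"older pairing time-integrable. Smoothness for $t>0$ follows by standard bootstrap. For the pressure, we bound $\n\pi$ using \eqref{Soluzionepres}, the kernel estimates on $\mathcal Q$ and $\mathcal N$, and $L^r(\eta,T;L^q)$ maximal-regularity-type bounds for $t\ge\varepsilon$. Here $u(\varepsilon)$ plays the role of initial datum, and its norm is controlled by \eqref{andamento_sol_q} at $t=\varepsilon$. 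This yields the linear-plus-quadratic structure of \eqref{stime_pressione_thm_principale}.
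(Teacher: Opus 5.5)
\textbf{Gap 1: the linear bound by $K(t,\rho)$.} Your overall architecture is the paper's. It uses Picard iteration in $\widetilde{\tm}\cdot\tm$, the quadratic recursion of Lemma~\ref{lemmaiterativo}, and the choice of $\rho$ first and then $T$. It also uses property (P) of Crispo--Maremonti for \eqref{limite_linfinito_soluzione} and a restart at $t=\varepsilon$ for the pressure. But the step that produces $K(t,\rho)$ for the linear part fails as you describe it.

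The half-space Stokes kernel is not of Gaussian type. By \eqref{Gkernel} and \eqref{SPNI}, $\mathcal G=\delta_{ij}[H(t,x-y)-H(t,x-y^*)]+G^*$. The part $G^*$ is only bounded by $(|x-y^*|^2+t)^{-n/2}e^{-\hat a y_n^2/t}$. It is Gaussian in $y_n$, but decays only algebraically in $|x'-y'|$. For $x_n\ge 2\rho$ and $t\ll\rho^2$, essentially all of its mass lies in the boundary layer $\{y_n\lesssim\sqrt t\}$, which does not meet $B_\rho(x)$. So the inequality $e^{-|x-y|^2/(4t)}\le e^{-\rho^2/(8t)}e^{-|x-y|^2/(8t)}$ is not available for this part. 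A direct tail estimate only gives something like $(t/\rho^2)^{\beta}\dm u_0\dm_{\texttt{w},p}$, not the factor $c_1e^{-\rho^2/(8t)}\dm u_0\dm_{\texttt{w},p}$ of \eqref{krho}.

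The paper (Lemma~\ref{piclinf}) handles the two parts of the kernel separately:
\begin{itemize}
\item The reflected-heat part is written as $H(t)\ast g^*$, with $g^*$ the odd extension, and the whole-space bounds \eqref{SLinfSCP}--\eqref{SPSCP} are applied to it.
\item $G^*$ is a fixed operator in $x$ applied to $H(t,\cdot-y^*)$. Using $H(t)=H(t/2)H(t/2)$, the paper rewrites $\int G^*(t,x,y)g(y)\,dy=\int G^*(t/2,x,y)\hat\psi(t/2,y)\,dy$, where $\hat\psi$ is the whole-space heat evolution of the reflected datum $\hat g$.
\item Then \eqref{SLinfSCP}--\eqref{SPSCP} control $\hat\psi(t/2)$ by $K_g(t/2,\rho)\le K_g(t,\rho)$. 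One only needs the uniform $L^1$ bound on $G^*(t/2,x,\cdot)$, and its $|y|^{-\alpha}$-weighted analogue for \eqref{picpunt}.
\end{itemize}
Some device of this kind is indispensable. A covering of $\R^n_+$ by $\rho$-cells that uses the integrability of the tangential tail $|x'-y'|^{-n}$ when $t\lesssim\rho^2$ is another option. Without such a device you do not obtain \eqref{andamento_sol_q} with the $K$ of \eqref{krho}.

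\textbf{Gap 2: the weighted pointwise bound on the Duhamel term.} The $L^q$, $\nabla$--$L^q$ and even $L^\infty$ parts can be obtained from $L^q$-boundedness of $P$ plus $L^q$--$L^\infty$ smoothing of the Stokes semigroup, as you suggest. However, $t^{n/(2p)}|x|^\alpha|\bar u(t,x)|$ is a genuinely pointwise quantity, and $P$ is a nonlocal singular integral. Pointwise bounds on $\mathcal G$ together with $L^q$ bounds on $P$ therefore do not give it. Your inequality $|x|^\alpha\le c(|x-y|^\alpha+|y|^\alpha)$ presupposes a kernel acting directly on $u\otimes u$ with pointwise control, and you do not say where that kernel comes from.

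The paper gets it from Solonnikov's representation \eqref{rapfabmod}. Using $u\otimes u=0$ on $\{x_n=0\}$, the projection is transferred onto the kernel. This leaves terms with $D_y\mathcal G$, bounded by \eqref{SKIC}, and terms with $K_{ijq}=\int G\,D_y\mathcal N^{\pm}$, bounded by \eqref{SKKS}. With these kernel bounds, splitting into $|y|<|x|/2$ and $|y|>|x|/2$ gives \eqref{sppnl} (Lemma~\ref{PPNLF}).
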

\begin{rem}\label{R-I}{\rm It is well known that, by standard arguments based on the representation formulas \rf{Soluzione}--\rf{Soluzionepres} (see, e.g., \cite{CMon, MS}), the solution $(u,\pi)$ given by Theorem~\ref{esistenza} enjoys pointwise regularity for all $(t,x)\in (\eta,T)\times \R^n$, for every $\eta>0$.
}
\end{rem}
\begin{rem}
\rm We do not address here a detailed analysis of the pressure term. Nevertheless, we would like to emphasize that, for $p>2n$, our solution belongs to the class of solutions considered in \cite{CMon}. Therefore, for such exponents, the theory developed in the aforementioned work applies to our setting as well.
\end{rem}
\begin{theorem}[Uniqueness]\label{unicita}{\sl
Let $(u,\pi)$ be a solution to problem \eqref{problem} whose existence is ensured by Theorem\,\ref{esistenza}. Then $(u,\pi)$ is unique in the class of existence detected in Theorem\,\ref{esistenza}.}
\end{theorem}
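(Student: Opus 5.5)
Let $(u_1,\pi_1)$ and $(u_2,\pi_2)$ be two solutions with the same datum $u_0$, both of the form \eqref{Soluzione}--\eqref{Soluzionepres} on a common interval $(0,T)$ and both satisfying \eqref{andamento_sol_q} and \eqref{limite_linfinito_soluzione}. The plan is to show $w:=u_1-u_2\equiv0$ on $(0,T)$. Once this is done, $\pi_1=\pi_2$ follows from \eqref{Soluzionepres}, whose right-hand side depends only on $u$.

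Subtracting the two representations \eqref{Soluzione} gives
\[
w(t)=-\int_0^t\!\!\int_{\R^n_+}\mathcal G(t-\tau,x,y)\,P\big[w\cdot\n u_1+u_2\cdot\n w\big](\tau,y)\,dy\,d\tau .
\]
I would estimate $w$ in $L^q$ for a fixed $q>n$. The first step uses the $L^r$--$L^q$ estimate for the Stokes semigroup composed with $P$ on $\R^n_+$ (the Solonnikov/Kato-type bounds behind \eqref{Soluzione}), taking $r=q$ and splitting the product by Hölder:
\[
\dm w\cdot\n u_1\dm_q\le \dm w\dm_q\dm\n u_1\dm_\infty
\quad\text{or}\quad
\dm w\cdot\n u_1\dm_q\le\dm w\dm_\infty\dm\n u_1\dm_q .
\]
The cleaner choice is
\[
\dm w\cdot\n u_1+u_2\cdot\n w\dm_q\le \dm w\dm_\infty\dm\n u_1\dm_q+\dm u_2\dm_\infty\dm\n w\dm_q .
\]
Accordingly I would work with the difference in the norm $\tm w(t)\tm$ of \eqref{tremazze}, restricted to its last three terms, namely
\[
N(t):=\sup_{s<t}\Big(s^{\frac12}\dm w(s)\dm_\infty+s^{\frac n2(\frac1n-\frac1q)}\dm w(s)\dm_q+s^{\frac n2(\frac1n-\frac1q)+\frac12}\dm\n w(s)\dm_q\Big).
\]

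The second step estimates each piece of $N$. For the gradient I use $\dm\n\,\mathcal G(t-\tau)P f\dm_q\le c(t-\tau)^{-1/2}\dm f\dm_q$. For the $L^\infty$ part I use $\dm\mathcal G(t-\tau)Pf\dm_\infty\le c(t-\tau)^{-\frac n{2q}}\dm f\dm_q$, which is integrable since $q>n$. Inserting the weighted bounds, each integrand takes the form $(t-\tau)^{-a}\tau^{-b}$, and the resulting Beta integrals reproduce exactly the powers of $t$ in $N$, because \eqref{tremazze} is scale invariant. This yields
\[
N(t)\le c\,\big(\tm u_1\tm_t+\tm u_2\tm_t\big)\,N(t),\qquad \tm u_i\tm_t:=\sup_{s<t}\tm u_i(s)\tm .
\]
By \eqref{limite_linfinito_soluzione}, there is $t_0>0$ with $c(\tm u_1\tm_{t_0}+\tm u_2\tm_{t_0})<1$, so $N(t_0)=0$ and hence $w=0$ on $(0,t_0)$. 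Here $N(t)<\infty$ because both solutions lie in the existence class.

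The third step extends this to the whole interval by continuation. Let $t^*=\sup\{t:w=0\text{ on }(0,t)\}$ and suppose $t^*<T$. Restarting at $t^*$, the datum $u_1(t^*)=u_2(t^*)$ is now bounded with $\n u_i\in L^q$ on $[t^*,T)$. A standard local estimate then gives
\[
\dm w\dm_{L^\infty(t^*,t^*+\delta;L^q)}\le c\,\delta^{1/2}\sup_{(t^*,T)}\big(\dm\n u_1\dm_\infty+\dm u_2\dm_\infty\big)\dm w\dm .
\]
Alternatively, one can use $\dm w\dm_q\dm\n u_1\dm_q$-type bounds with $\dm u_i\dm_\infty$ bounded away from $0$, and small $\delta$ gives $w=0$ past $t^*$, a contradiction.

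I expect the main obstacle to be the first step near $t=0$. The weights $\tau^{-b}$ are singular there, so the exponents in the Beta integrals must be checked to remain below $1$; this is where $q>n$ and the choice of Hölder splitting matter. One must also justify that the semigroup bounds for $\mathcal G P$ on $\R^n_+$ hold in the needed $L^q\to L^\infty$ and gradient forms. If the $L^\infty$ splitting fails integrability, I would instead take $r=q/2$ with $\dm w\dm_q\dm\n u_1\dm_q$, which also closes for $q>n$.
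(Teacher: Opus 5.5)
\paragraph{Missing reduction to mild solutions.} Your argument assumes from the outset that the competitor satisfies the integral equation \eqref{Soluzione}. In the paper the competitor $v$ is only a solution of \eqref{problem}, regular for $t>0$, that satisfies \eqref{andamento_sol_q}, \eqref{limite_linfinito_soluzione} and the initial condition \eqref{weak_convergence1}. That such a $v$ has the representation \eqref{Soluzione} is exactly Lemma~\ref{Dv}. The paper proves it by testing $v-\mathcal G[u_0]-\overline v$ against backward Stokes flows $\psi(t-s)$, $\psi_0\in\mathscr C_0(\R^n_+)$, and letting $s\to0$. This needs \eqref{weak_convergence1} together with the small-time behaviour of $\overline v$ in $L^{q/2}$, $L^{n,\infty}$ or $L^{\frac{np}{2(p-n)},\infty}$, according to $p<2n$, $p=2n$, $p>2n$ (Lemma~\ref{andamento_a_0_parte_non_lineare}). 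Your proof never uses \eqref{weak_convergence1}, which shows the problem. As it stands you prove uniqueness only among mild solutions; the step reducing a general solution in the class to a mild one is missing.

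\paragraph{Step 2 does not close as written.} With the splitting you call cleaner, \eqref{andamento_sol_q} and the definition of $N$ give $\|w\|_\infty\|\nabla u_1\|_q+\|u_2\|_\infty\|\nabla w\|_q\le c\,\tau^{-\frac32+\frac{n}{2q}}$. Since $\frac n{2q}<\frac12$, this is not integrable at $\tau=0$. Your kernel factors $1$, $(t-\tau)^{-\frac n{2q}}$, $(t-\tau)^{-\frac12}$ are bounded there, so all three integrals diverge; scale invariance fixes the power of $t$ only when the Beta integral converges.

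Your $L^{q/2}$ fallback gives $\tau^{-\frac32+\frac nq}$, which is integrable only for $q<2n$, not for all $q>n$. Since the class gives the bounds for every $q>n$, you can fix $q\in(n,2n)$. The three components then close with kernels $(t-\tau)^{-\frac n{2q}}$, $(t-\tau)^{-\frac nq}$ and $(t-\tau)^{-\frac12-\frac n{2q}}$. Alternatively, write the forcing as $\nabla\cdot(w\otimes u_1+u_2\otimes w)$; this tensor vanishes at $x_n=0$, and it gives $\tau^{-1+\frac n{2q}}$, with the gradient integral split at $t/2$ as in Lemma~\ref{LEmmaPNLGRADLq}.

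\paragraph{Step 3.} Use the divergence form with $\|u_i(\tau)\|_\infty\le c\,t_0^{-1/2}$ for $\tau\ge t_0$, then Lemma~\ref{L-GWSI} as in \eqref{WB}. Avoid $\|\nabla u_1\|_\infty$, which the class does not list.

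\paragraph{Comparison with the paper.} With these repairs, and once both solutions are known to be mild, your Kato-type contraction in the scale-invariant norm is a valid and simpler substitute for the paper's duality argument in $L^{\overline q/2}$ or weak Lorentz spaces. It also avoids the case distinction in $p$. But the reduction to mild solutions has to be supplied first.
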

Uniqueness is proved as in \cite{MP}. The argument relies the decomposition of the solution into a linear part and a remainder containing the convective term which allows the uniqueness issue to be reduced to the latter component. The conclusion then follows from a duality argument and a generalized Gronwall inequality (see Lemma\,\ref{L-GWSI}).\par
The paper is organized as follows. In Sect.\,\ref{sect. 2}, we introduce the notation and collect the preliminary results that will be used throughout the paper. In Sect.\,\ref{sect. 3}, we recall the results concerning the Cauchy problem for the Stokes system in the weighted $L^p_{\texttt{w}}$ setting. In Sect.\,\ref{sect. 4}, we introduce the IBVP problem for the Stokes equations in the half space; here, we introduce the Green function related to this problem and present several estimates that are instrumental for our analysis. We then recall the results for the homogeneous Stokes system and subsequently establish pointwise and $L^q$-estimates, with $q \in (n,\infty]$, for the solution to the Stokes initial-boundary value problem with a suitable body force. Finally, in Section\,\ref{sect. 5}, we prove our main results, that is Theorems\,\ref{esistenza} and \ref{unicita}.

\section{Preliminary results}\label{sect. 2}
In this section, we introduce some notations, and further we recall some useful preliminary results.\par
 At first, we remark that the weight function \eqref{wf} is a Muckenhoupt weight, and the space $L^p_{\texttt{w}}(\Omega)$, with the relative norm, can be defined for each measurable set $\Omega \subseteq \R^n$.\par
Hereafter, we use the notation $D_x^{\mathtt{k}}$ for partial derivatives with respect to $x$, where $\mathtt{k}$ is a multi-index with $|\mathtt{k}|=k>1$. For first-order derivatives, i.e., $|\mathtt{k}|=1$, we use the standard notation $\nabla$.
In addition, with the symbol $D_t^h$, we denote  the $h$-th partial derivative with respect to the real variable $t$. \par
For the sake of exposition, all the definitions and results  presented in this section, are given for the half-space $\mathbb{R}^n_+$. The corresponding statements remain valid for the whole space, as well as for bounded and exterior domains, provided suitable regularity assumptions are satisfied. \par
We start by recalling the Helmholtz decomposition in the $L^p_{\texttt{w}}(\R^n_+)$ space. We restrict our attention to the weighted spaces; for a comprehensive overview of the Helmholtz decomposition in the classical Lebesgue spaces and related topics, we refer the reader to \cite{galdilibro}.\par
We define the following spaces\,\footnote{For more details and proof, we refer to \cite{DFP}.}
\begin{gather*}
J^q_{\texttt{w}}(\mathbb{R}_+^n)=\{v\in L^q_{\texttt{w}}(\mathbb{R}_+^n)\, |\, (v,\nabla h)=0 \,\, \text{for all}\, h \in W^{1,q'}_{loc}(\mathbb{R}_+^n,w')\text{,}\, \nabla h \in L^{q'}_{\texttt{w}'}(\R_+^n)\}\,,\\
G^q_{\texttt{w}}(\mathbb{R}_+^n)=\{u\in L^q_{\texttt{w}}(\mathbb{R}_+^n)\, |\, \exists \pi\, : \,\, u=\nabla \pi \text{,}\,\text{with} \, \pi \in W^{1,q}_{loc}(\mathbb{R}_+^n,w)\text{,}\, \nabla \pi \in L^{q}_{\texttt{w}}(\R_+^n)\}\,,
\end{gather*}
where $\texttt{w}$ and $\texttt{w}'$ are the weight function and its dual, as defined in \eqref{wf} and \eqref{dwf}, respectively.\par
\begin{theorem}\label{dec_helmotz_teorema}{\sl 
Let $q \in (1,+\infty)$. Then, there holds
\begin{equation}
L^q_{\texttt{w}}(\R_+^n)=J^q_{\texttt{w}}(\mathbb{R}_+^n)\oplus G^q_{\texttt{w}}(\mathbb{R}_+^n)\,,
\end{equation}
 that is for all $u \in L^q_{\texttt{w}}(\R_+^n)$, $u=v+\nabla \pi_u$, with the following integral identities:
\begin{gather}
(u,\nabla \pi)=(\nabla \pi_u ,\nabla \pi) \quad \text{for all}\, \, \nabla \pi \in G_{\texttt{w}'}^{q'}(\R_+^n)\, ,  \label{1)} \\
(v,\nabla \pi)=0 \quad \text{for all}\, \, \nabla \pi \in G_{\texttt{w}'}^{q'}(\R_+^n) \label{2)}\, ,
\end{gather}
and
\begin{equation} \label{3}
\dm v\dm _{L^q_\texttt{w}(\R^n_+)} + \dm  \nabla \pi \dm _{L^q_\texttt{w}(\R^n_+)} \leq C \dm u\dm _{L^q_\texttt{w}(\R^n_+)}\,
\end{equation}
with $C$ independent of $u$.}
\end{theorem}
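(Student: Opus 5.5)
The plan is to reduce the weighted Helmholtz decomposition to a weighted Neumann problem. The reduction rests on the fact that $\texttt{w}^q$ is a Muckenhoupt $A_q$ weight.

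\textbf{Step 1 (the weight class).} Since $\texttt{w}=\prod_j|x-\bar x_j|^{\alpha_j}$ with $0\le\alpha_j$ and $\sum\alpha_j=\alpha<1$, we have $q\alpha_j<q<n(q-1)$ as soon as $q>1$, so each factor $|x-\bar x_j|^{q\alpha_j}$ lies in $A_q(\R^n)$. A finite product of power weights with distinct singular points is again $A_q$, because near each $\bar x_j$ only one factor is singular and at infinity it behaves like $|x|^{q\alpha}$. When some points coincide, the exponents add, and the sum still satisfies $q\alpha<n(q-1)$ for $q>1$. The condition $q\alpha>-n$ holds trivially. Dually, $(\texttt{w}')^{q'}=\texttt{w}^{-q'}$ lies in $A_{q'}$, and $(L^q_{\texttt{w}})^*=L^{q'}_{\texttt{w}'}$. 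Restricting to $\R^n_+$ keeps the $A_q$ property relative to the half-space.

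\textbf{Step 2 (existence of $\pi_u$).} Given $u\in L^q_{\texttt{w}}(\R^n_+)$, I would first extend $u$ to $\R^n$. The tangential components are extended evenly in $x_n$ and the normal component oddly, or equivalently one uses the even/odd reflection adapted to the Neumann problem. Then define $\nabla\pi_u:=\nabla(-\Delta)^{-1}\nabla\cdot \tilde u$ restricted to $\R^n_+$; this is the Riesz-transform composition $R\otimes R\,\tilde u$. To handle the reflected weight, set $\tilde{\texttt{w}}(x',x_n)=\texttt{w}(x',|x_n|)$, which is comparable to a product of power weights centered at $\bar x_j$ and at their reflections $\bar x_j^*$. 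It is therefore again in $A_q$ by Step 1. The Coifman–Fefferman weighted boundedness of Calderón–Zygmund operators on $L^q(\tilde{\texttt{w}}^q)$ then yields $\|\nabla\pi_u\|_{\texttt{w},q}\le C\|u\|_{\texttt{w},q}$. By the choice of parity, $\nabla\pi_u$ solves the weak Neumann problem $(\nabla\pi_u,\nabla h)=(u,\nabla h)$ for all test gradients, which is \eqref{1)}. First verify this on $C_0^\infty$ data, then pass to general $u$ by density, which holds since the weight is $A_q$. Setting $v:=u-\nabla\pi_u$ gives \eqref{2)}, with $v\in J^q_{\texttt{w}}$, and \eqref{3} follows from the triangle inequality.

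\textbf{Step 3 (directness of the sum).} Suppose $\nabla\pi\in J^q_{\texttt{w}}\cap G^q_{\texttt{w}}$. For any $\phi\in L^{q'}_{\texttt{w}'}$, apply Step 2 in the dual space to get $\phi=\psi+\nabla\sigma$. Then $(\nabla\pi,\phi)=(\nabla\pi,\psi)+(\nabla\pi,\nabla\sigma)=0$. The first term vanishes because $\psi$ is orthogonal to the $L^{q'}_{\texttt{w}'}$-gradients, after approximating $\pi$; the second vanishes because $\nabla\pi\in J^q_{\texttt{w}}$. Hence $\nabla\pi=0$ by duality.

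The main obstacle is Step 2. One must check that the reflection is compatible with the boundary condition implicit in the definition of $J^q_{\texttt{w}}$, namely a Neumann condition with tests $h\in W^{1,q'}_{loc}$ that are not vanishing on $x_n=0$. One must also verify that the reflected weight stays $A_q$ uniformly. The first thing to try is to confirm that even reflection of $\pi$ corresponds to the correct parity choice for the components of $u$.
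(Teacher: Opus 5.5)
The paper gives no proof of this theorem. It refers to the authors' earlier work on the Stokes problem and only remarks that $\texttt{w}$ is a Muckenhoupt weight, so your route (reflection plus weighted Calder\'on--Zygmund theory) is the natural reading of the intended argument. Your parity choice is correct: extending $u'$ evenly and $u_n$ oddly makes $\tilde\pi$ even, and for the even extension $\tilde h$ of any test function, $(\nabla\tilde\pi-\tilde u,\nabla\tilde h)_{\R^n}=2(\nabla\pi_u-u,\nabla h)_{\R^n_+}$.

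\textbf{The gap is in Step 1.} The inequality $q<n(q-1)$ holds only for $q>n/(n-1)$, not for every $q>1$. Since $\texttt{w}(x)\sim|x|^{\alpha}$ as $|x|\to\infty$, a necessary condition for $\texttt{w}^q\in A_q$ is $q\alpha<n(q-1)$, i.e. $q>n/(n-\alpha)$. This condition is also sufficient, because every local exponent at a (possibly repeated) point $\bar x_j$ is at most $q\alpha$. Because $\alpha=1-n/p\in(0,1)$, the range $1<q\le n/(n-\alpha)$ is nonempty, and there Coifman--Fefferman is unavailable. In fact your reflected Riesz projection does not map $L^q_{\texttt{w}}(\R^n_+)$ into itself there. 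For $u\in C_0^\infty(\R^n_+)$ with $\int u_1\,dx\neq0$, the leading term of $\nabla\pi_u$ is the gradient of a dipole, so $|\nabla\pi_u(x)|\geq c|x|^{-n}$ on a cone at infinity. But $\int_{|x|>1}|x|^{(\alpha-n)q}\,dx=\infty$ when $q\alpha\ge n(q-1)$.

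This cannot be repaired by a finer argument, because for such $q$ the statement itself can fail. Take two distinct points with $\alpha_1=\alpha_2=\alpha/2$, and take $q$ with $n/\alpha<q'<2n/\alpha$.
\begin{itemize}
\item Then $\texttt{w}^{-1}\in L^{q'}(\R^n_+)$: it is integrable near each $\bar x_j$ since $\alpha q'/2<n$, and at infinity since $\alpha q'>n$.
\item Hence $e_1=\nabla x_1\in G^{q'}_{\texttt{w}'}$, so every $v\in J^q_{\texttt{w}}$ satisfies $(v,e_1)=0$.
\item On the other hand, every $\nabla\pi\in G^q_{\texttt{w}}$ lies in $L^1(\R^n_+)$ by H\"older.
\item Consequently $\int_{\R^n_+}\partial_1\pi\,dx=0$. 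On a.e.\ slice $\{x_n=\mathrm{const}\}$, $\pi$ differs from a constant by a function in $L^{\frac{n-1}{n-2}}(\R^{n-1})$ (recall $n-1\ge2$), so $\int\partial_1\pi\,dx_1=0$ on a.e.\ line.
\end{itemize}
Thus $\int u_1\,dx=0$ for every $u\in J^q_{\texttt{w}}\oplus G^q_{\texttt{w}}$, which excludes any $u\in C_0^\infty(\R^n_+)$ with $\int u_1\neq0$.

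So your argument covers precisely the Muckenhoupt range $q>n/(n-\alpha)$. That range contains $q=p$ (there $p\alpha=p-n<n(p-1)$), which is the exponent the paper actually uses. The range $q\in(1,+\infty)$ in the statement, and the paper's Muckenhoupt remark, are only justified in that range.

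Two smaller points:
\begin{itemize}
\item The reflected weight is $\prod_j\min(|x-\bar x_j|,|x-\bar x_j^*|)^{\alpha_j}$. It is not comparable to a product of power weights at $\bar x_j$ and $\bar x_j^*$, since that product grows like $|x|^{2\alpha}$. It is nonetheless $A_q$ under the same condition.
\item In Step 3, $\psi\in J^{q'}_{\texttt{w}'}$ is orthogonal to gradients in $L^q_{\texttt{w}}$, not to gradients in $L^{q'}_{\texttt{w}'}$. So $(\nabla\pi,\psi)=0$ holds directly by definition, and no approximation of $\pi$ is needed.
\end{itemize}
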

Setting
\[\mathscr{C}_0(\R^n_+):=\{\phi \in \mathit{C}_0^{\infty}(\R^n_+)\, :\, \n \cdot \phi=0\}\,,\]
it's no difficult to prove that $J^{p}_{\texttt{w}}(\R^n_+)$ coincide with the completion of $\mathscr{C}_0(\R^n_+)$ in the $L^p_{\texttt{w}}(\R^n_+)$-norm\footnote{For more details, see \cite{DFP} and references therein. 
}.\par
In \cite{yamazaki}, the author showed that the Helmholtz decomposition also holds for Lorentz spaces. In particular, by extending the projection operator $P$ to the space $L^{p,q}(\R^n_+)$, for $p\in(1,+\infty)$, $q\in [1,+\infty)$, he proved that the direct sum decomposition $L^{p,q}(\R^n_+)=J^{p,q}(\R^n_+)\oplus G^{p,q}(\R^n_+)$ holds. Moreover, for $p$ and $q$ as above, the space $\mathscr{C}_0(\R^n_+)$ is dense in $J^{p,q}(\R^n_+)$ and estimates \eqref{1)}, \eqref{2)} and \eqref{3} still hold with natural adaptations.\par
For the sake of completeness, we recall the following lemma contained, \textit{e.g.}, in \cite{Mlectures}.
\begin{lemma}\label{HDLS}{\sl Let $w\in J^q(\R^n_+)\cap L^{r,\infty}(\R^n_+)$, for some $q,r\in(1,\infty)$. Assume that \be\label{HDLS-I} |(w,\varphi)|\leq A\dm \varphi\dm_{(r',1)}\,,\mbox{ for all }\varphi\in \mathscr C_0(\R^n_+)\,.\ee
Then, there exists a constant $c>0$ such that \be\label{HDLS-II}|(w,\psi)|\leq cA\dm\psi\dm_{(r',1)}\,,\mbox{ for all }\psi\in L^{r',1}(\R^n_+)\,.\ee
In particular, we get $$\dm w\dm_{(r,\infty)}\leq c\,A\,.$$}\end{lemma}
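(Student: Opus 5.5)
The statement to prove is Lemma~\ref{HDLS}. The plan is to pass from the divergence-free test functions $\mathscr C_0(\R^n_+)$ to arbitrary $\psi\in L^{r',1}(\R^n_+)$ by means of the Helmholtz projection in Lorentz spaces recalled above from \cite{yamazaki}. The last claim will then follow from the duality $(L^{r',1})'=L^{r,\infty}$.

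\emph{Step 1: extension to $J^{r',1}$.} Since $\mathscr C_0(\R^n_+)$ is dense in $J^{r',1}(\R^n_+)$ (recall $r'\in(1,\infty)$), the functional $\varphi\mapsto(w,\varphi)$, bounded on $\mathscr C_0$ by \eqref{HDLS-I}, extends to $J^{r',1}$ with the same bound $A$. The extension must coincide with the pairing $(w,\cdot)$. This holds because $w\in L^{r,\infty}$ makes $\varphi\mapsto (w,\varphi)$ continuous on $L^{r',1}$, by the Lorentz H\"older inequality $|(w,\varphi)|\le c\dm w\dm_{(r,\infty)}\dm\varphi\dm_{(r',1)}$. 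Hence $|(w,\phi)|\le A\dm\phi\dm_{(r',1)}$ for all $\phi\in J^{r',1}(\R^n_+)$.

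\emph{Step 2: the gradient part vanishes.} For $\psi\in L^{r',1}$, decompose $\psi=P\psi+\nabla\pi_\psi$ with $\dm P\psi\dm_{(r',1)}\le c\dm\psi\dm_{(r',1)}$, by the Lorentz version of \eqref{3}. I then need $(w,\nabla\pi_\psi)=0$. This is the main obstacle, because $w$ is only known to lie in $J^q$ with $q$ possibly unrelated to $r$, so \eqref{2)} does not apply directly in the $L^{r,\infty}$–$L^{r',1}$ pairing. I would approximate $\psi$ by $\psi_k\in C_0^\infty(\R^n_+)$ in $L^{r',1}$. Each $\psi_k$ lies in $L^{q'}\cap L^{r',1}$, and the Helmholtz decompositions in $L^{q'}$ and in $L^{r',1}$ coincide on such functions (both projections are given by the same explicit half-space formula, hence agree on the intersection). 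For $\nabla\pi_{\psi_k}\in G^{q'}$ the definition of $J^q$ gives $(w,\nabla\pi_{\psi_k})=0$. Since $\nabla\pi_{\psi_k}\to\nabla\pi_\psi$ in $L^{r',1}$ by boundedness of the projection, and $w\in L^{r,\infty}$, passing to the limit yields $(w,\nabla\pi_\psi)=0$.

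\emph{Step 3: conclusion.} Combining the two steps,
$$|(w,\psi)|=|(w,P\psi)|\le A\dm P\psi\dm_{(r',1)}\le cA\dm\psi\dm_{(r',1)},$$
which is \eqref{HDLS-II}. Finally, the dual norm of $L^{r',1}$ is equivalent to the $L^{r,\infty}$ norm (see \cite{Hunt}), so taking the supremum over $\dm\psi\dm_{(r',1)}\le1$ gives $\dm w\dm_{(r,\infty)}\le cA$.
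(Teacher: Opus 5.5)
Your proposal is correct and follows essentially the same route as the paper. Both proofs extend \eqref{HDLS-I} from $\mathscr C_0(\R^n_+)$ to $J^{r',1}(\R^n_+)$ by density and the Lorentz H\"older inequality. Both then approximate $\psi$ by compactly supported smooth $\psi_m$, whose gradient parts lie in $G^{q'}$ and are therefore annihilated by $w\in J^q$, and conclude with the boundedness of the projection in $L^{r',1}$. The only difference is organizational: you prove $(w,\nabla\pi_\psi)=0$ as a separate limit statement, whereas the paper runs the same approximation through a single triangle-inequality chain in $\psi-\psi_m$ and $\varphi-\varphi_m$.
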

\bp
We claim that the estimate \rf{HDLS-I} initially defined on $\mathscr C_0(\R^n_+)$ can be extended to all $\varphi\in J^{r',1}(\R^n_+)$.
Actually, if $\{\varphi_m\} \subset \mathscr C_0(\R^n_+)$ is convergent to $\varphi$ in $J^{r',1}$, then  we easily get $$\ba{ll}|(w,\varphi)|\hskip-0.2cm&\leq A\dm \varphi\dm_{(r',1)}+A\dm \varphi_m-\varphi\dm_{(r',1)}+|(w,\varphi_m-\varphi)|\VSE \leq A\dm \varphi\dm_{(r',1)}+A\dm \varphi_m-\varphi\dm_{(r',1)}+\dm w\dm_{(r,\infty)}\dm \varphi_m-\varphi\dm_{(r',1)}\,.\ea$$ Letting $m\to\infty$, we realize the goal.
Now, we look for \rf{HDLS-I} with $\psi\in L^{(r',1)}(\R^n_+)$.\par
Assume $\psi\in L^{(r',1)}(\R^n_+)$  as limit of $\{\psi_m\}\subset C_0^1(\R^n_+)$ in $L^{r',1}(\R^n_+)$-norm. For all $m\in\N$, we consider the Helmholtz decomposition of $\psi_m$ in $J^{r',1}$. The decomposition  detects two sequences $\{\varphi_m\}\subset J^{r',1}\cap  J^{q'}(\R^n_+)$ and $\{\n h_m\}\subset L^{r',1}(\R^n_+)\cap L^{q'}(\R^n_+)$, the former, letting $m\to\infty$, furnishes $\varphi\in J^{r',1}(\R^n_+)$ of the Helmholtz decomposition of $\psi$. Both the sequences are in $L^{q'}(\R^n_+)$ as consequence of the compact support of $\psi_m$.\par 
By virtue of the orthogonality between  $w$ and $\n h_m$, recalling that $\varphi_m=\psi_m-\n h_m$ via the Helmholtz decomposition, we get
$$|(w,\psi)|=|(w,\psi-\psi_m)+(w,\psi_m)|=|(w,\psi_m-\n h_m)|=|(w,(\psi-\psi_m)+(w,\varphi_m)|\,.$$  
Applying H\"older's inequality, via \rf{HDLS-I} we arrive at
$$ \ba{ll}|(w,\psi)|\hskip-0.2cm&\leq|(w,\psi-\psi_m)|+|(w,-\varphi+\varphi_m)|+|(w,\varphi)|\VSE\leq \dm w\dm_{(r,\infty)}\Big[\dm \psi-\psi_m\dm_{{(r',1)}}+\dm \varphi-\varphi_m\dm_{{(r',1)}}\Big]+|(w,\varphi)|\VSE \leq \dm w\dm_{(r,\infty)}\Big[\dm \psi-\psi_m\dm_{{(r',1)}}+\dm \varphi-\varphi_m\dm_{{(r',1)}}\Big]+A\dm \varphi\dm_{{(r',1)}}\VSE\leq \dm w\dm_{(r,\infty)}\Big[\dm \psi-\psi_m\dm_{{(r',1)}}+\dm \varphi-\varphi_m\dm_{{(r',1)}}\Big]+cA\dm \psi\dm_{{(r',1)}}\,,\ea$$ where in the last estimate we also employ the estimate $\dm \varphi\dm_{J^{(r',1)}}\leq c\dm\psi\dm_{(r',1)}$.\par
Letting $m\to\infty$ in the last relation, we complete the proof. 
\ep

In the following, we denote by $\mathcal{E}(x) $ the fundamental solution of the Laplace equation, i.e. for $n\geq 3$
\begin{equation}\label{Erapp}
    \mathcal{E}(x) = \frac{1}{n(n-2)V(n) |x|^{2-n}} 
\end{equation}
where $V(n)$ denotes the volume of the unit sphere in $\R^n$, and we denote by $H(t,x)$ the fundamental solution of the heat equation
\begin{equation}\label{Hrapp}
    H(t,x) = \begin{cases}
\frac{1}{(4\pi t)^{\frac{n}{2}}}e^{-\frac{|x|^2}{4t}} & \text{if } \; t>0\\
0 & \text{if } \; t=0.
\end{cases}
\end{equation}
We recall an interpolation inequality for our specific weighted space
\begin{lemma} \label{interpol}
\sl
Let $g \in L^{\infty}(\R^n) \cap L^p_{\texttt{w}}(\R^n)$.There exists a positive constant $c$, independent of $g$, such that
\begin{equation}\label{stima_Lq}
\dm g\dm _q \leq c \dm g\dm _{\texttt{w},p}^{1-\gamma}\dm g\dm _{\infty}^{\gamma }\,, \qquad \mbox{with }\gamma=1-\frac{n}{q}\,,
\end{equation}
for all $q>n$.
\end{lemma}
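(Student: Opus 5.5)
The plan is to split $\R^n$ into the region where the weight is large and its complement, using the scale-invariance $\alpha=1-\frac np$. For $R>0$ to be chosen, set
$$E_R:=\{x\in\R^n:\ \texttt{w}(x)> R\},\qquad F_R:=\R^n\setminus E_R .$$

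On $E_R$ we use the weighted norm. Since $q>n$ and $p>n$, we may assume $q\ge p$; otherwise H\"older plus the bound below on $F_R$ handle it. On $E_R$ we have $|g|\le |g|^{p/q}\,\|g\|_\infty^{1-p/q}$ and $1\le(\texttt{w}/R)^{p}$. Hence
$$\int_{E_R}|g|^q\,dx\le \|g\|_\infty^{q-p}\,R^{-p}\int_{E_R}|g|^p\texttt{w}^p\,dx\le R^{-p}\|g\|_\infty^{q-p}\|g\|_{\texttt{w},p}^p .$$

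On $F_R$ we use the sup norm and a measure bound:
$$\int_{F_R}|g|^q\,dx\le \|g\|_\infty^q\,|F_R| .$$
The key geometric step is to show $|F_R|\le c\,R^{n/\alpha}$ for all $R>0$. For the pure power $|x|^\alpha$ this is exact, since $F_R$ is the ball of radius $R^{1/\alpha}$. For the product weight \eqref{wf}, I would argue as follows.

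For $|x|\ge 2\max_j|\bar x_j|=:2M$, each factor satisfies $|x-\bar x_j|\ge|x|/2$. So $\texttt{w}(x)\ge 2^{-\alpha}|x|^\alpha$, and the part of $F_R$ outside $B_{2M}$ lies in $B_{cR^{1/\alpha}}$.

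Inside $B_{2M}$ the set $F_R$ is bounded, with measure at most $c(M)$. This does not scale correctly as $R\to0$, however. A cleaner route is to check that $|\{\texttt{w}\le R\}|\le c R^{n/\alpha}$ near the points $\bar x_j$ by local analysis. Near each cluster of coinciding points, $\texttt{w}\approx C|x-\bar x|^{\beta}$ with $\beta\le\alpha$, so small $R$ gives measure $\lesssim R^{n/\beta}\le cR^{n/\alpha}$ for $R\le1$.

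This measure estimate is the expected obstacle. For large $R$ the far-field bound suffices. For small $R$ the local bound gives $|F_R|\lesssim R^{n/\beta}$, which is even smaller. However, it is not of the form $R^{n/\alpha}$ uniformly, so the final constant will depend on the points $\bar x_j$. That is acceptable, since $c$ is only required to be independent of $g$.

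It is therefore safest to prove only $|F_R|\le c\,R^{n/\alpha}$ for $R\ge R_0$, and $|F_R|\le c$ otherwise. If necessary, avoid small $R$ altogether via the alternative optimization described below.

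Combining the two regions,
$$\|g\|_q^q\le R^{-p}\|g\|_\infty^{q-p}\|g\|_{\texttt{w},p}^p + c\,R^{n/\alpha}\|g\|_\infty^q .$$
We choose $R$ to balance the terms, namely $R^{p+n/\alpha}=\|g\|_{\texttt{w},p}^p\,\|g\|_\infty^{-p}$. Since $\frac n\alpha=\frac{np}{p-n}$, we have $p+\frac n\alpha=\frac{p^2}{p-n}$, so $R^{n/\alpha}=(\|g\|_{\texttt{w},p}/\|g\|_\infty)^{n}$. Hence
$$\|g\|_q^q\le c\,\|g\|_{\texttt{w},p}^n\,\|g\|_\infty^{q-n},$$
which is exactly \eqref{stima_Lq} with $1-\gamma=\frac nq$.

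The dimensional check confirms the exponents are forced by scaling. The remaining issue is the regime where the optimal $R$ is small, where $|F_R|\le cR^{n/\alpha}$ might fail. Before pursuing the local bound near clusters, I would first try to circumvent it by a case analysis on the ratio $\|g\|_{\texttt{w},p}/\|g\|_\infty$, using the bounded-region estimate in that case.
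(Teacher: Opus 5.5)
Your splitting by the level sets of $\texttt{w}$ and the choice of $R$ by balancing are the right mechanism. For $q\ge p$ the argument closes as soon as $|F_R|\le c\,R^{n/\alpha}$ holds for \emph{every} $R>0$. As written, however, the proposal leaves two genuine holes.

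First, you never commit to the small-$R$ bound, and the fallback you suggest cannot work. That fallback uses only $|F_R|\le c$ for small $R$, plus a case analysis on $\|g\|_{\texttt{w},p}/\|g\|_\infty$. When this ratio is small (e.g.\ $g$ the indicator of a tiny ball), your bound then contains the term $c\|g\|_\infty^q$, which is not controlled by $\|g\|_{\texttt{w},p}^n\|g\|_\infty^{q-n}$. The decay $|F_R|=O(R^{n/\alpha})$ as $R\to0$ is indispensable here. Your own cluster analysis already gives it, since $R^{n/\beta}\le R^{n/\alpha}$ for $R\le1$ when $\beta\le\alpha$, so the doubt you raise afterwards is unfounded.

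Second, the range $n<q<p$ is not handled, although the statement covers it and the paper uses it (e.g.\ $p,q\in(n,2n)$ in Lemma~\ref{andamento_a_0_parte_non_lineare}). On $E_R$ the inequality $|g|^q\le|g|^p\|g\|_\infty^{q-p}$ reverses when $q<p$. The ``H\"older'' step you mention would need an estimate of $\int_{E_R}\texttt{w}^{-pq/(p-q)}\,dx$ with the correct power of $R$, which you do not supply.

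Both holes close with one elementary observation. Since $\alpha_j\ge0$ and $\sum_j\alpha_j=\alpha$,
\[
\texttt{w}(x)\ge\big(\min_j|x-\bar x_j|\big)^{\alpha},\qquad\text{so}\qquad F_R\subset\bigcup_{j=1}^m B_{R^{1/\alpha}}(\bar x_j),\quad |F_R|\le m\,|B_1|\,R^{n/\alpha}\ \ \text{for all }R>0.
\]
This needs no cluster analysis, and the constant is even independent of the positions $\bar x_j$.

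You can then treat all $q>n$ at once. For $S_\lambda=\{|g|>\lambda\}$ one has $\lambda^{-p}\|g\|_{\texttt{w},p}^p\ge\int_{S_\lambda}\texttt{w}^p\ge R^p\big(|S_\lambda|-|F_R|\big)$. Choose $R$ with $m|B_1|R^{n/\alpha}=|S_\lambda|/2$ and use $1+\frac{\alpha p}{n}=\frac pn$. This gives $|S_\lambda|\le c\,\lambda^{-n}\|g\|_{\texttt{w},p}^n$, i.e.\ $L^p_{\texttt{w}}\hookrightarrow L^{n,\infty}$; equivalently, $\texttt{w}^{-1}\in L^{n/\alpha,\infty}$ combined with H\"older in Lorentz spaces. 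The layer-cake formula then yields $\|g\|_q^q=q\int_0^{\|g\|_\infty}\lambda^{q-1}|S_\lambda|\,d\lambda\le\frac{c\,q}{q-n}\|g\|_{\texttt{w},p}^n\|g\|_\infty^{q-n}$, which is \eqref{stima_Lq}.

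If you prefer to keep your split for $q<p$, the same minimum bound works there too. On $E_R$ it gives $\texttt{w}^{-s}\le\sum_j\min\big(R^{-s},|x-\bar x_j|^{-\alpha s}\big)$ with $s=\frac{pq}{p-q}$. Hence $\int_{E_R}\texttt{w}^{-s}\le c\,R^{\frac n\alpha-s}$, because $\alpha s>n$ exactly when $q>n$. H\"older then gives $\int_{E_R}|g|^q\le c\,R^{-\frac{q-n}{\alpha}}\|g\|_{\texttt{w},p}^q$, and balancing against the $F_R$ term again yields \eqref{stima_Lq}.

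The paper itself gives no argument for this lemma; it only refers to \cite{DFP}.
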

\begin{proof}
    For the proof, see \cite{DFP}.
\end{proof}
In what follows, we will also need to recall a Gronwall-type inequality established in \cite{MP}. For the sake of brevity, we state only the result relevant to our purposes. A more extensive treatment of the topic can be found in the quoted work and in the references cited therein.\par
Let
\be\label{GRLNI}\mu\in [0,1)\,,\; h(t)\leq \psi(t)+ \intll0t\sfrac{g(\tau,h(\tau))}{(t-\tau)^\mu}d\tau\,,\;h(t)\geq0\,,\;\mbox{ a.e. in }t\in(0,T)\,.\ee
where $\psi(t) \in L(\ssfrac{1}{\mu},\infty)(0,T)$ and $g$ is a function such that, a.e. in $\tau\in(0,T)$, \begin{equation}\label{PgGI}
g(\tau,h(\tau))\leq  A \,h(\tau)\hskip0.05cm |\tau-t_0|^{-\nu}
\end{equation}
with $A\geq 0$, independent of $\tau$, $t_0 \in [0,T)$ and for  suitable exponents $\nu\in[0,1)$.\par
We have the following
\begin{lemma}\label{L-GWSI}{\sl 
Suppose that $\mu+\nu<1$ and   $|t-t_0|^{-\nu}h(t)\in L^1(0,T)$. Then:
\be\label{I-G-I}\Big[\intll0t h^s(\tau)d\tau\Big]^\frac1s\leq t^{\frac1s-\mu}\dm h\dm_{(\frac{1}{\mu},\infty)}\leq C \dm \psi \dm_{(\frac{1}{\mu},\infty)}\,\;\mbox{for all }s\in[1,\ssfrac1\mu)\mbox{ and }t\in [0,T)\,,   \ee 
where $C:= C(A,T)$ is a constant.
}
\begin{proof}
    For the proof of this lemma, see Lemma$\,$3 in \cite{MP}.
\end{proof}
\end{lemma}
\section{Stokes Cauchy problem in \texorpdfstring{$L^p_\texttt{w}(\R^n)$}{Lp-weighted(Rn)}-setting}\label{sect. 3}
We first recall the existence and uniqueness theorem for the Stokes Cauchy problem, together with an auxiliary result suited to our approach.\par
We consider  the Stokes Cauchy problem
\begin{equation}\label{Cproblem}
\begin{array}{ll}
u_t - \Delta u = - \nabla \pi  \,,\;&\text{in}\,\, (0,\infty) \times \mathbb{R}^n\,, \\
\nabla \cdot u=0\,, &\text{in}\,\, (0,\infty) \times \mathbb{R}^n, \\
u(0,x)=u_0(x)\,, 
\end{array}
\end{equation}
under the assumptions that $u_0 \in L^p_{\texttt{w}}(\mathbb{R}^n)$ and $\nabla \cdot u_0=0$ in the weak sense.\par
In \cite{MP}, the authors consider this problem with initial data in $L^p_{\alpha}(\R^n)=\{u\, :\, u|x|^{\alpha} \in L^p(\R^n)\}$ and, among other things, establish results on the existence and uniqueness of a regular solution. In \cite{DFP}, for the Stokes problem, we extend the results considering the more general weight function \eqref{wf}.
\begin{theorem}\label{SPWS}
\sl{
Let consider $p>n\geq 3$. For all $u_0 \in J_{\texttt{w}}^p(\R^n)$ there exists a unique  smooth solution $u(t,x)$ to the Cauchy problem \eqref{Cproblem} such that the following representation holds
\be\label{RHEOu} 
u(t,x)=\int_{\R^n} H(t,x-y)u_0(y)\,dy\,,
\ee
and
\begin{equation}\label{DCWS}
\dm u(t)\dm _{L^{p}_\texttt{w}(\R^n)}\leq \dm u(s)\dm _{L^{p}_\texttt{w}(\R^n)}\,,\mbox{ for all }t>s \geq 0\,.
\end{equation}
Moreover, there exists a constant $c$ independent of $u_0$ and of $s\geq0$, such that for all $l,k \in \N_0$ and for $q \in (n,\infty]$ the following estimate holds
\begin{equation}\label{gradienti_eq_stokes_beta_q}
\dm D_t^l\nabla^k u(t)\dm _{L^q(\R^n)}\leq c(t-s)^{-\frac{k}{2}-l-\frac{n}{2}({\frac{1}{n}}-\frac{1}{q})}\dm u(s)\dm _{L^{p}_\texttt{w}(\R^n)}\,, \text{ for all } t>0\,.
\end{equation} 
Furthermore, the following limit property holds
\begin{equation}\label{convergenza_parte_lineare}
\lim_{t \to 0^+} \dm u(t) - u_0\dm _{L^{p}_\texttt{w}(\R^n)} = 0\,.
\end{equation} 
  Finally,  for all $T>0$ and $q\in (n, +\infty]$, we get \be\label{CRHE}u\in C([0,T);J^p_{\texttt{w}}(\R^n))\cap C((\vep,T); L^q(\R^n)),\mbox{ for all }\vep>0\,.\ee
  }
\end{theorem}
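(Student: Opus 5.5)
Since $u_0\in J^p_{\texttt{w}}(\R^n)$ is divergence free, the pressure can be taken constant (zero), so the Stokes Cauchy problem reduces to the heat equation for each component. I define $u$ by the representation \rf{RHEOu} and check the claimed properties in turn.

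\emph{Well-posedness of \rf{RHEOu}.} The convolution converges absolutely. Indeed, $u_0=\texttt{w}^{-1}(\texttt{w}u_0)$, and $\texttt{w}'=\texttt{w}^{-1}$ behaves like $|y-\bar x_j|^{-\alpha_j}$ near each $\bar x_j$, which is locally in $L^{p'}$ because $\alpha p'<n$. At infinity it behaves like $|y|^{-\alpha}$, and the Gaussian is integrable against it. Hölder's inequality then gives $|u(t,x)|\le \dm H(t,x-\cdot)\texttt{w}'\dm_{p'}\dm u_0\dm_{\texttt{w},p}$, so $u$ is smooth and solves the heat equation. Divergence freeness is preserved because convolution commutes with derivatives in the distributional sense.

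\emph{Estimate \rf{gradienti_eq_stokes_beta_q}.} By the semigroup property it suffices to take $s=0$. For $q=\infty$, I bound $\dm D_t^l\nabla^k H(t,x-\cdot)\texttt{w}'\dm_{p'}$ uniformly in $x$ by $ct^{-k/2-l-1/2}$. Scaling is the key: under $y=x+\sqrt t z$, the product weight satisfies $\texttt{w}'(x+\sqrt t z)=t^{-\alpha/2}\prod|x'+z-\bar x_j'|^{-\alpha_j}$ with rescaled centres, and the factor $t^{-n/2}\cdot t^{n/(2p')}\cdot t^{-\alpha/2}$ equals $t^{-1/2}$. It therefore remains to show $\sup_{x,\bar x_j}\int |\nabla^kH(1,z)|^{p'}\prod|x+z-\bar x_j|^{-\alpha_jp'}dz<\infty$. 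I would prove this by applying the generalized Hölder inequality with exponents $\alpha/\alpha_j$, which reduces it to single-centre integrals $\int e^{-c|z|^2}|z-a|^{-\alpha p'}dz$. These are bounded uniformly in $a$ because $\alpha p'<n$. This uniformity in the centres is the main technical point.

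For finite $q>n$, I use Lemma \ref{interpol}, which gives $\dm \nabla^k u\dm_q\le c\dm\nabla^k u\dm_{\texttt{w},p}^{1-\gamma}\dm\nabla^ku\dm_\infty^\gamma$. This requires a weighted bound $\dm\nabla^k u(t)\dm_{\texttt{w},p}\le ct^{-k/2}\dm u_0\dm_{\texttt{w},p}$. Since $\texttt{w}\in A_p$, I obtain it from the weighted boundedness of the maximal function, because $|\nabla^kH(t)*f|\le ct^{-k/2}Mf$. The exponents then combine: $(1-\gamma)(-k/2)+\gamma(-k/2-1/2)=-k/2-\frac n2(\frac1n-\frac1q)$, as required. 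Time derivatives are handled through $D_t=\Delta$.

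\emph{Estimate \rf{DCWS}.} A maximal-function bound would only yield \rf{DCWS} up to a constant, whereas the statement has constant $1$. I would try the following weighted energy argument first. Multiply the heat equation by $|u|^{p-2}u\,\texttt{w}^p$ and integrate by parts. The cross term involves $\nabla \texttt{w}^p$, whose size $\sim p\alpha_j|x-\bar x_j|^{-1}\texttt{w}^p$ must be absorbed by the dissipation via the Hardy inequality. Since $\alpha<1$, the relevant constants plausibly allow this, but it is delicate. If the argument fails, I would settle for a constant $c$, which is what the later applications actually need.

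\emph{Continuity.} For \rf{convergenza_parte_lineare}, I approximate $u_0$ by $C_0^\infty$ data, for which the convergence is classical, and combine this with the uniform bound $\sup_t\dm H(t)*\cdot\dm_{L^p_{\texttt{w}}\to L^p_{\texttt{w}}}<\infty$ from the $A_p$ maximal estimate. \rf{CRHE} then follows from the semigroup property together with \rf{gradienti_eq_stokes_beta_q}.

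\emph{Uniqueness.} I use duality with the weight $\texttt{w}'$ in the class $C([0,T);J^p_{\texttt{w}})$: for a solution with zero data, testing against backward heat solutions with smooth compactly supported data gives $(u(t),\varphi)=0$.

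The main obstacle is the sharp contraction \rf{DCWS}; the uniform-in-centres kernel estimate is the other real step.
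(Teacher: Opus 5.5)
Your proof of \eqref{gradienti_eq_stokes_beta_q} is sound. The scaling $t^{-\frac n2}t^{\frac n{2p'}}t^{-\frac\alpha2}=t^{-\frac12}$ is right, and so is the reduction to single-centre integrals by generalized H\"older, which is uniform in the centres since $\alpha p'=\frac{p-n}{p-1}<n$. The $A_p$ maximal bound for $\dm\nabla^k u(t)\dm_{\texttt{w},p}$, the exponent bookkeeping with Lemma~\ref{interpol} and the continuity arguments also check out. The gap is \eqref{DCWS}, which the statement asserts with constant exactly $1$ and which you do not prove. The maximal-function route yields only $\dm u(t)\dm_{\texttt{w},p}\le c\,\dm u(s)\dm_{\texttt{w},p}$. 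The energy route is only announced (``plausibly'', ``if the argument fails, I would settle for a constant $c$''). Settling for $c$ proves a weaker theorem, whatever the later applications require.

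The energy route does close, but the missing step is exactly the one you call delicate, and your heuristic for it is not the right one. Set $W=\texttt{w}^p$ and $v=|u|^{p/2}$. Integrating the cross term by parts once more, and using $|\nabla|u||\le|\nabla u|$ for vector fields, gives
\[
\frac{d}{dt}\int_{\R^n}|u|^pW\,dx\le-\theta\int_{\R^n}|\nabla v|^2W\,dx+\int_{\R^n}v^2\,\Delta W\,dx,\qquad \theta:=\frac{4(p-1)}{p}.
\]
So you need the weighted Hardy-type inequality $\int v^2\Delta W\le\theta\int|\nabla v|^2W$ with exactly this constant. With several centres this is not the classical one-centre Hardy inequality.

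One way to obtain it is the ground-state identity with $\phi=W^{-\sigma}$, which gives $\int|\nabla v|^2W\ge\int v^2\bigl(\sigma\Delta W-\sigma^2|\nabla W|^2/W\bigr)$. It then suffices that $(\theta\sigma-1)\Delta W\ge\theta\sigma^2|\nabla W|^2/W$ pointwise. To check this:
\begin{itemize}
\item write $\Delta W/W=|\nabla\log W|^2+(n-2)S$ with $S=\sum_j p\alpha_j|x-\bar x_j|^{-2}$;
\item note that the resulting coefficient $\theta\sigma-1-\theta\sigma^2$ of $|\nabla\log W|^2$ is negative, because $\theta<4$;
\item apply Cauchy--Schwarz, $|\nabla\log W|^2\le\alpha p\,S$.
\end{itemize}
This reduces everything to the one-centre condition. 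With $\sigma=\frac{\alpha p+n-2}{2\alpha p}$ it reads $\alpha p\le(p-1)(n-2)$, i.e.\ $p(n-3)+2\ge0$. So the constant $1$ comes from $n\ge3$, not from $\alpha<1$; for $n=2$ this condition fails. You would still have to justify the integrations by parts: the behaviour of $u(t)$ at infinity for $t>0$, and of $\phi$ near the centres.
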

The following result follows the same argument as Lemma$\,8$ and $9$ in \cite{MP}. We give the analogous of these lemmas in our framework. The proof proceeds along the same lines, once the results established in \cite{DFP} are taken into account. Therefore, for the sake of brevity, we omit the details of the proof.
\begin{lemma}
\sl{  Let $g \in L^p_\texttt{w}(\R^n)$. Then, for the convolution product $H\ast g$ the following estimates hold 
  \begin{equation}\label{SLinfSCP}
      t^{\frac{1}{2}}\dm H\ast g(t)\dm _{\infty}\leq K_g(t,\rho),\quad \text{for all}\, \, t>0\mbox{ and }\rho>0\,.
  \end{equation}
In particular, for all $(t,x)\in(0,+\infty)\times\R^n$,
\begin{equation} \label{SPSCP}
     t^{\frac{n}{2p}}|x|^{\alpha}|H\ast g(t,x)|\leq K_g(t,\rho)\,,
\end{equation}
where $K_g(t, \rho)$ is defined in \eqref{krho}.
}
\end{lemma}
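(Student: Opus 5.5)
The plan is to estimate the heat convolution $H\ast g(t,x)=\int_{\R^n}H(t,x-y)g(y)\,dy$ pointwise. The integral is split into a near part over $B_\rho(x)$ and a far part over its complement. The near part produces the local norm $\dm g\dm_{\texttt{w},p,\rho}$, and the far part produces the exponentially small factor $e^{-\rho^2/8t}$ multiplying $\dm g\dm_{\texttt{w},p}$. Both pieces are handled with H\"older's inequality in the weighted duality $L^p_{\texttt{w}}$--$L^{p'}_{\texttt{w}'}$:
\[
|H\ast g(t,x)|\le \Big[\int_{B_\rho(x)}|g|^p\texttt{w}^p\Big]^{\frac1p}\,\dm H(t,x-\cdot)\,\texttt{w}'\dm_{L^{p'}(B_\rho(x))}+\dm g\dm_{\texttt{w},p}\,\dm H(t,x-\cdot)\,\texttt{w}'\dm_{L^{p'}(\R^n\setminus B_\rho(x))}.
\]
On the complement of $B_\rho(x)$ we use $|x-y|\ge\rho$ and write $e^{-|x-y|^2/4t}\le e^{-\rho^2/8t}e^{-|x-y|^2/8t}$. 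This extracts the factor $e^{-\rho^2/8t}$, and what remains is a Gaussian of the same type as the original kernel.

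The heart of the proof is the weighted kernel bound
\[
\Big[\int_{\R^n}H^{p'}(t,x-y)\,\texttt{w}'^{\,p'}(y)\,dy\Big]^{\frac1{p'}}\le c\,t^{-\frac n{2p}}\,\min\Big\{t^{-\frac\alpha2},\ \prod_j|x-\bar x_j|^{-\alpha_j}\Big\}.
\]
The same bound is needed with $H$ replaced by $e^{-|x-y|^2/8t}t^{-n/2}$. To prove it, we apply the generalized H\"older inequality to the product $\prod_j|y-\bar x_j|^{-\alpha_j p'}$, splitting the Gaussian into $m$ factors with exponents $\theta_j=\alpha_j/\alpha$. Each factor then reduces to a single-singularity integral
\[
\int t^{-\frac{n}{2}p'}e^{-c|x-y|^2/t}|y-\bar x_j|^{-\alpha p'}\,dy.
\]
This integral is finite because $\alpha p'<n$, which is equivalent to $p>n$ since $\alpha p'=\frac{p-n}{p-1}<n$.

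The single-singularity integral is estimated in two ways. Scaling $y=\bar x_j+\sqrt t\,z$ gives the bound $t^{-\frac n2(p'-1)-\frac{\alpha p'}2}$, which is the $t^{-\alpha/2}$ branch. Splitting according to whether $|y-\bar x_j|\ge|x-\bar x_j|/2$ gives the $|x-\bar x_j|^{-\alpha}$ branch: on that region the weight is bounded by $c|x-\bar x_j|^{-\alpha p'}$, while on the remaining region $|x-y|\ge|x-\bar x_j|/2$ and the Gaussian decay absorbs the singularity. Since $\frac n{2p}+\frac\alpha2=\frac12$, the first branch yields \eqref{SLinfSCP}. Using the second branch and $\prod_j|x-\bar x_j|^{\alpha_j}$ gives the analogue of \eqref{SPSCP} with weight $\texttt{w}(x)$.

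The main obstacle is the weight $|x|^\alpha$ in \eqref{SPSCP}, since the points $\bar x_j$ need not be the origin. We intend to interpolate between the two estimates. Writing $|x|^\alpha\le c\big(\texttt{w}(x)+\sum_j|\bar x_j|^\alpha\big)$ is not scale-invariant, so a better route is needed. First we try to control $|x|^{\alpha}$ by $\prod_j(|x-\bar x_j|+\sqrt t)^{\alpha_j}$ plus a correction term. If that fails, we read \eqref{SPSCP} in the framework of \cite{DFP}, where the statement is understood with the weight $\texttt{w}$ (as for $\bar x_j=0$), and prove it in that form by the argument above. Constants $c_0,c_1$ depend only on $n,p,\alpha_j$.
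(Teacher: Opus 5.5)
Your treatment of \eqref{SLinfSCP} is correct and is essentially the argument the paper delegates to Lemmas 8--9 of \cite{MP}. It splits the integral over $B_\rho(x)$ and its complement, applies H\"older in the $L^p_{\texttt{w}}$--$L^{p'}_{\texttt{w}'}$ duality, and extracts $e^{-\rho^2/8t}$ from the Gaussian on the far part. Your generalized H\"older step with exponents $\alpha_j/\alpha$ is the natural way to reduce the product weight to single-point integrals, and the bookkeeping $\frac n{2p}+\frac\alpha2=\frac12$ is right. (A minor slip: $\alpha p'=\frac{p-n}{p-1}<1$ for every $p>1$, so local integrability is automatic and is not equivalent to $p>n$.) Your second branch also correctly gives $t^{\frac n{2p}}\texttt{w}(x)|H\ast g(t,x)|\le K_g(t,\rho)$.

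What your plan leaves open is \eqref{SPSCP} with $|x|^\alpha$, and this gap cannot be closed, because that inequality is false in general. Take $m=1$, $\texttt{w}(y)=|y-\bar x|^\alpha$ with $\bar x\neq0$, fix $\beta\in(\frac np,1)$, and set $g(y)=|y-\bar x|^{-\beta}\chi_{B_1(\bar x)}(y)$. Since $(\alpha-\beta)p>(\alpha-1)p=-n$, we have $g\in L^p_{\texttt{w}}(\R^n)$ and $K_g(t,\rho)\le(c_0+c_1)\dm g\dm_{\texttt{w},p}$ for all $t,\rho$. Yet scaling $y=\bar x+\sqrt t\,\zeta$ gives, for $t\le1$,
\[
t^{\frac n{2p}}|\bar x|^{\alpha}\,H\ast g(t,\bar x)=|\bar x|^{\alpha}\,t^{\frac n{2p}-\frac\beta2}\int_{|\zeta|<t^{-1/2}}H(1,\zeta)|\zeta|^{-\beta}\,d\zeta\;\ge\; c\,|\bar x|^{\alpha}\,t^{\frac n{2p}-\frac\beta2}\longrightarrow\infty\quad(t\to0^+).
\]
The same construction works near any $\bar x_j$ carrying a positive exponent, and $\bar x_j\neq0$ always holds for points of the open half-space.

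So your first fallback must fail for any correction term compatible with $K_g$. Combined through generalized H\"older, your single-point bounds actually control $t^{\frac n{2p}}\prod_j(|x-\bar x_j|+\sqrt t)^{\alpha_j}|H\ast g(t,x)|$. That weight degenerates like a power of $t$ at $x=\bar x_j$, whereas $|x|^\alpha=|\bar x_j|^\alpha$ does not. Nor is \eqref{SPSCP} an ``in particular'' of \eqref{SLinfSCP}: the latter yields it only where $|x|\le\sqrt t$.

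The correct statement is the one you proved, with $\texttt{w}(x)$ in place of $|x|^\alpha$. The $|x|^\alpha$ form is then recovered, for instance, for $|x|\ge2\max_j|\bar x_j|$, where $\texttt{w}(x)\ge2^{-\alpha}|x|^\alpha$. This matches the restriction $|x|>4R$ in Corollary~\ref{andamenti_asintotici}. Present your second alternative as a necessary correction of the lemma, supported by the counterexample, rather than as a conditional ``reading''; your remark ``as for $\bar x_j=0$'' never applies in this setting. Note also that \eqref{picpunt}, which is derived from \eqref{SPSCP}, needs the same correction.
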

\section{IBVP of the Stokes system in the half-space}\label{sect. 4}
Let us consider the following Stokes initial boundary value problem:
\begin{equation}\label{Stokesf}
\begin{array}{ll}
v_t - \Delta v = -  \nabla \pi +  f \,,\;&\text{in}\,\, (0,\infty) \times \mathbb{R}^n_{+}\,, \\
\nabla \cdot v=0\,, &\text{in}\,\, (0,\infty) \times \mathbb{R}^n_{+}, \\
v_{| x_n=0}\,=0 \,,\\
v(0,x)=v_0(x)\,,
\end{array}
\end{equation}
Assuming suitable hypothesis on the datum $f$, it can be represented via the Helmholtz decomposition as
\[
f = Pf + \nabla \Psi\,\,\,\,\,\,\,\, \mbox{with}\qquad
\Psi(x,t) = - \int_{\mathbb{R}^n_+} \nabla_y \big[ \mathcal{E}(x-y) + \mathcal{E}(x-y^*) \big] \cdot f(t,y)\,dy,
\]
where $\mathcal{E}(z)$ is the fundamental solution of the Laplace equation defined in \eqref{Erapp}. Moreover,
\[
\nabla \cdot Pf = 0, \qquad Pf \cdot n\big|_{x_n=0} = 0.
\]
In \cite{CMon, solo, soln}, it has been shown that a solution of problem \eqref{Stokesf} is formally given by
\begin{equation}\label{rap}
v(x,t) =
\int_{\mathbb{R}^n_+} \mathcal{G}(t,x,y)\,v_0(y)\,dy
+
\int_0^t \int_{\mathbb{R}^n_+}
\mathcal{G}(t-\tau,x,y)\,Pf(\tau,y)\,dy\,d\tau,
\end{equation}

\[
\pi(x,t) =
\int_{\mathbb{R}^n_+} \mathcal{Q}(t,x,y)\cdot u_0(y)\,dy
+
\int_0^t \int_{\mathbb{R}^n_+}
\mathcal{Q}(t-\tau,x,y)\cdot Pf(\tau,y)\,dy\,d\tau
+ \Psi(t,x),
\]
where $\mathcal{G} = (G_{ij})_{i,j=1,\dots,n}$ and $\mathcal{Q} = (Q_j)_{j=1,\dots,n}$ are the following functions
\begin{equation}
\begin{aligned}\label{Gkernel}
G_{ij}(t,x,y)
&= \delta_{ij}\big(H(t,x-y) - H(t,x-y^*)\big)
+ 4\,\delta_{jn}\,\partial_{x_i}
\int_{\mathbb{R}^n_+}
\partial_{z_n}\mathcal{E}(x-z)\,H(t,x-z^*)\,dz \\
&= \delta_{ij}\big(H(t,x-y) - H(t,x-y^*)\big)
+ G^{*}_{ij}(t,x,y)\,,
\end{aligned}
\end{equation}
\be Q_j(t,x,y)=4\hat{\delta}_{rj}D_{x_r} \left[ \int_{\R^{n-1}}\!\!\!\!\!\! D_{x_n} \mathcal{E}(x-z')H(t,z'-y',y_n) dz' + \int_{\R^{n-1}}\!\!\!\!\!\! \mathcal{E}(x-z')D_{y_n}H(t,z'-y', y_n) dz'\right],\label{Q}\ee
where $\hat{\delta}_{rj} =1- \delta_{rj}\,$.\par
The functions $G^*_{ij}$ and $Q_j$ satisfy the following pointwise estimate
\begin{equation}\label{SPNI}
\begin{array}{cc}
\vspace{5pt}
     |D^{h,l,k}_{x,y,t} G^*_{ij}(t,x,y)|\leq c\, t^{-\frac{l_n}{2}-k}(x_n^2 + t)^{\frac{h_n}{2}} (|x-y^*|^2 +t)^{-\frac{n+|h'|+|l'|}{2}}e^{-\hat{a}\frac{y_n^2}{t}}\,, \\ 
    |D^{h,l,k}_{x,y,t} Q_{j}(t,x,y)|\leq c\, t^{1-\frac{l_n}{2}-k} (|x-y^*|^2 +t)^{-\frac{n-1+|h|+|l'|}{2}}e^{-\hat{a}\frac{y_n^2}{t}}\,,
\end{array}
\end{equation}
for any $|h|,|l|, k \in \N \cup\{0\}$ with $h=(h',h_n)$, $l=(l',l_n)$, and $\hat{a}>0$. Therefore, taking into account the following property of the heat kernel
\begin{equation}
       |D^{h,l,k}_{x,y,t} H(t,x-y)|\leq c\, t^{-\frac n2-\frac{|l|}{2}-\frac {|h|}{2}} e^{a\frac{|x-y|^2}{t}}\leq c \,(|x-y|^2 +t)^{-\frac{n+|h|+|l|+2k}{2}}\,,
\end{equation}
we have that
\begin{equation}\label{SKIC}
       |D^{h,l,k}_{x,y,t} G_{ij}(t,x,y)|\leq c \,(|x-y|^2 +t)^{-\frac{n+|h|+|l|+2k}{2}}+ c\, t^{-\frac{l_n}{2}-k}(x_n^2 + t)^{\frac{h_n}{2}} (|x-y^*|^2 +t)^{-\frac{n+|h'|+|l'|}{2}}e^{-\hat{a}\frac{y_n^2}{t}}\,
\end{equation}
since $H(t, x-y)$ satisfies \eqref{SPNI} as well.\par
\begin{lemma}\label{lemmaStimeyoungKernell}
\sl{
    Let $\mathcal{G} = (G_{ij})_{i,j=1,\dots,n}$ be the kernel appearing in \eqref{rap}, where $G_{i,j}$ is defined in \eqref{Gkernel}. Then, for $r\geq1$, the following estimate holds  \begin{equation}\label{StimeyoungKernellx}
      \sup_{x\in \R^n_+}\dm D^{h,l,k}_{x,y,t} \, G_{i,j}(t,x)\dm_{r}\leq c\, t^{-\frac{n+|h|+|l|+2k}{2}+\frac{n}{2r}}\,,
    \end{equation}
and, similarly,
\begin{equation}\label{StimeyoungKernelly}
      \sup_{y\in \R^n_+}\dm D^{h,l,k}_{x,y,t} \, G_{i,j}(t,y)\dm_{r}\leq c\, t^{-\frac{n+|h|+|l|+2k}{2}+\frac{n}{2r}}\,,
    \end{equation}
for each fixed $t>0$.}
\end{lemma}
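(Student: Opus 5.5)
The proof of \eqref{StimeyoungKernellx} and \eqref{StimeyoungKernelly} will rest only on the pointwise bound \eqref{SKIC}. The plan is to split $G_{ij}$ into its heat part, $\delta_{ij}\big(H(t,x-y)-H(t,x-y^*)\big)$, and the correction $G^*_{ij}$, and to estimate the $L^r$-norm of each piece by scaling. For $y\in\R^n_+$ and $x\in\R^n_+$ we have $|x-y^*|\geq|x-y|$ and $|x-y^*|\geq |x'-y'|+|x_n|+|y_n|$ up to a constant. Hence all reflected terms are dominated by kernels that behave like $(|z|^2+t)^{-\frac{n+m}{2}}$ in a translated variable.

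For the heat part, the bound $|D^{h,l,k}H(t,x-y)|\leq c(|x-y|^2+t)^{-\frac{n+|h|+|l|+2k}{2}}$ together with the substitution $z=(x-y)/\sqrt t$ gives
\[
\Big(\int_{\R^n}(|x-y|^2+t)^{-\frac{(n+m)r}{2}}dy\Big)^{\frac1r}=t^{-\frac{n+m}{2}+\frac{n}{2r}}\Big(\int_{\R^n}(1+|z|^2)^{-\frac{(n+m)r}{2}}dz\Big)^{\frac1r},
\]
with $m=|h|+|l|+2k$. The last integral is finite since $(n+m)r>n$ for $r\geq1$. The reflected term $H(t,x-y^*)$ is treated in the same way, because $y\mapsto y^*$ is measure preserving. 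By symmetry of the integrand in $x,y$, the same computation gives the supremum in $y$ of the $L^r$-norm in $x$.

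For $G^*_{ij}$, the bound \eqref{SPNI} has to be absorbed. I would use $e^{-\hat a y_n^2/t}\,t^{-l_n/2}\leq c\,(y_n^2+t)^{-l_n/2}$, which follows from $(1+s^2)^{l_n/2}e^{-\hat a s^2}$ being bounded, and $t^{-k}\leq$ the analogous factor with another Gaussian splitting; this part needs care. The factor $(x_n^2+t)^{h_n/2}$ grows in $x_n$ and must be compensated. Since $|x-y^*|^2\geq x_n^2$, we get $(x_n^2+t)^{h_n/2}\leq (|x-y^*|^2+t)^{h_n/2}$, which kills $h_n$ powers of decay. This is exactly the main obstacle: naively one only gets $(|x-y^*|^2+t)^{-\frac{n+|h'|+|l'|}{2}}$, with the $h_n$ derivatives producing no extra decay in $x_n$.

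To recover the claimed exponent I would exploit the Gaussian factor differently in the two estimates. For \eqref{StimeyoungKernellx} (integration in $y$), the factor $e^{-\hat a y_n^2/t}$ localizes $y_n\lesssim\sqrt t$. I would integrate $y'$ against $(|x'-y'|^2+x_n^2+t)^{-\frac{n+|h'|+|l'|}{2}}$, producing $(x_n^2+t)^{-\frac{n+|h'|+|l'|}{2}+\frac{n-1}{2r}}$, and $y_n$ over a region of size $\sqrt t$, producing $t^{\frac{1}{2r}}$. Combined with $t^{-l_n/2-k}(x_n^2+t)^{h_n/2}$, the total power of $(x_n^2+t)$ is nonpositive when $n-1+|h'|+|l'|-h_n\geq \frac{n-1}{r}$ fails only in degenerate cases. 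In those cases one bounds $(x_n^2+t)\geq t$ and checks that the resulting power of $t$ is the claimed $-\frac{n+|h|+|l|+2k}{2}+\frac n{2r}$, at least when the normal derivatives are counted with the $t^{-1/2}$ weight. If this balance fails for large $h_n$, I would go back to the explicit formula \eqref{Gkernel}, move $x_n$-derivatives onto the heat factor through the structure $\partial_{x_n}H(t,x-z^*)$, and use the more refined estimate of Solonnikov \cite{soln}. For \eqref{StimeyoungKernelly} (integration in $x$), the same argument applies with the roles swapped: the $x'$-integration gives $t$-scaling, and the $x_n$-integral of $(x_n^2+t)^{\cdots}$ converges by the decay in $|x-y^*|$.
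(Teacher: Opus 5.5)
Your heat-kernel estimate and the order of integration you propose for $G^*_{ij}$ (first $y'$ by scaling, then $y_n$ against the Gaussian) are the same as the paper's computation of $I_1$ and $I_2$. The gap is in the case of normal $x$-derivatives, $h_n\ge1$. You were right that the factor $(x_n^2+t)^{+h_n/2}$ printed in \eqref{SPNI} is an obstacle, but you do not resolve it. Taking that factor at face value, your computation gives
\[
\dm D^{h,l,k}_{x,y,t}G^*_{ij}(t,x,\cdot)\dm_r\le c\,t^{-\frac{l_n}{2}-k+\frac{1}{2r}}\,(x_n^2+t)^{\frac{h_n-n-|h'|-|l'|}{2}+\frac{n-1}{2r}} .
\]
Its supremum over $x_n>0$ is $+\infty$ if the exponent is positive, and equals $t^{h_n}\,t^{-\frac{n+|h|+|l|+2k}{2}+\frac{n}{2r}}$ otherwise. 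Neither is the claimed bound with $c$ independent of $t$.

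Your repair does not help. Using $x_n^2+t\ge t$ goes the wrong way in the first case and reproduces the wrong power in the second. So the sentence that one ``checks that the resulting power of $t$ is the claimed'' one is false, and the fallback to \eqref{Gkernel} is only announced, not carried out.

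The missing observation is that the sign in \eqref{SPNI} is a misprint: Solonnikov's bound, like its companion \eqref{SKKCM}, carries $(x_n^2+t)^{-h_n/2}$. Scaling confirms this. Since $G_{ij}(t,x,y)=\lambda^nG_{ij}(\lambda^2t,\lambda x,\lambda y)$, the derivative $D^{h,l,k}_{x,y,t}G^*_{ij}$ is homogeneous of degree $-(n+|h|+|l|+2k)$ in $(\sqrt t,x,y)$. The printed bound has degree $-(n+|h|+|l|+2k)+2h_n$, so rescaling and letting $\lambda\to0$ would force $D^{h,l,k}_{x,y,t}G^*_{ij}\equiv0$.

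With the correct sign, use $(x_n^2+t)^{-h_n/2}\le t^{-h_n/2}$ and keep $t^{-\frac{l_n}{2}-k}$ as a constant prefactor. Do not trade it for $(y_n^2+t)^{-l_n/2}$ as you suggest; that only wastes the Gaussian you need for the $y_n$-integral. Then, for every $x\in\R^n_+$,
\[
\dm D^{h,l,k}_{x,y,t}G^*_{ij}(t,x,\cdot)\dm_r^r\le c\,t^{-\frac{(n+|h|+|l|+2k)r}{2}+\frac{n-1}{2}}\int_0^\infty e^{-\hat a r\frac{y_n^2}{t}}\,dy_n=c\,t^{-\frac{(n+|h|+|l|+2k)r}{2}+\frac{n}{2}},
\]
with no case distinction. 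This is what the paper does in its model case, where the $G^*$-term appears with the factor $t^{-\frac12}$ and no growing factor in $x_n$.

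A smaller point: your claim that $\int_{\R^n}(1+|z|^2)^{-\frac{(n+m)r}{2}}dz<\infty$ ``since $(n+m)r>n$ for $r\ge1$'' fails for $m=0$, $r=1$. For the heat part, just keep the Gaussian. For $G^*_{ij}$ in \eqref{StimeyoungKernelly}, no Gaussian acts on the integration variable $x$. After the $x'$-integration, even the correct pointwise bound leads to $\int_0^\infty(x_n^2+t)^{-\frac{(n+|h|+|l'|)r}{2}+\frac{n-1}{2}}dx_n$, which is finite only if $(n+|h|+|l'|)r>n$. So the endpoint $|h|+|l'|=0$, $r=1$ is not covered by this method, neither by your argument nor by the paper's. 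It should be excluded or handled by a different argument rather than asserted.
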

\begin{proof}
    We only prove the estimate in the case $|h|=1$ and $|l|=k=0$, since the general case can be treated in the same way.\par
    Considering the zero extension of $\n\, G$ to $\R^n$ and taking into account the pointwise estimate \eqref{SKIC}, we have
\begin{equation}\label{b}
\begin{split}
    \dm \nabla G(t,x)\dm _{r}^{r} & \leq \int_{\R^n} \frac{1}{\left(|x-y|^2 + t \right)^{\frac{n+1}{2}r}} \, dy + \int_{\R^n} \frac{e^{-\frac{\hat{a}y^2_n}{t}r}}{\left(|x-y^*|^2 + t \right)^{\frac{n}{2}r} t^{\frac{r}{2}}} \, dy =: I_1 + I_2. 
\end{split}
\end{equation}
About the first term, by simple computations, we have
\begin{equation}
  I_1  = \frac{1}{t^{\frac{n+1}{2}r}}\int_{\R^n} \frac{1}{\left(\frac{|x-y|^2}{t} + 1 \right)^{\frac{n+1}{2}r}} \, dy =\frac{1}{t^{\,\frac{(n+1)r-n}{2}}}\int_{\R^n} \frac{1}{\left(|z|^2 + 1 \right)^{\frac{n+1}{2}r
  }} \, dy \leq \frac{c}{t^{\,\frac{(n+1)r-n}{2}}}\,, \label{I_1}
\end{equation}
with $c$ independent of $x$. \par
Concerning the second term, recalling that $x':=(x_1, ..., x_{n-1}),$ we have 
\begin{align*}
    I_2&= \frac{1}{t^{\frac{r}{2}}} \int_{-\infty}^{+\infty} e^{-\hat{a}\frac{y^2_n}{t}r} \int_{\R^{n-1}} \frac{1}{\left(|x'-y'|^2 + x_n^2 + y_n^2+ t \right)^{\frac{n}{2}r} } \, dy=\frac{1}{t^{\frac{r}{2}}} \int_{-\infty}^{+\infty} \frac{ e^{-\hat{a}\frac{y^2_n}{t}q'} }{ (x_n^2 + y_n^2+ t )^{\frac{n(r-1)+1}{2}}} dy_n,
\end{align*}
where the last integral has been computed by the change of variables $z'= \frac{x'-y'}{t^{\frac{1}{2}}}$.\par
Now,
\begin{equation}
    \label{I_2}
    I_2  \leq \frac{1}{t^{\frac{r}{2}-\frac{1}{2}+\frac{n(r-1)+1}{2}} }
    \int_{-\infty}^{+\infty} \frac{e^{-\hat{a}\frac{y^2_n}{t}r}}{t^\frac{1}{2}} dy_n \leq\frac{c}{t^{\,\frac{(n+1)r}{2}-\frac{n}{2}}}.
\end{equation} 
with $c$ independent of $x$.\par
So, using \eqref{I_1} and \eqref{I_2} in \eqref{b}, we have
\begin{equation}\label{c}
    \dm \nabla G(t,x)\dm_{r} \leq \frac{c}{t^{\,\frac{n+1}{2}-\frac{n}{2r}}}\,,
\end{equation}
from which the claim follows. Equivalently, one obtains \eqref{StimeyoungKernelly}.
\end{proof}
In \cite{Solonn}, it has been shown that if there exists a tensor $F$ such that
\[
f_j = \nabla \cdot F_j
\quad \text{and} \quad
F_{nj}\big|_{x_n=0}=0, 
\qquad j=1,\dots,n,
\]
where by $F_j$ we denote the $j$th-column of $F$, then $Pf$ can be written as
\[
\mathtt{f}_j = (Pf)_j = \nabla \cdot \mathcal{F}_j,
\]
with
\begin{align*}
\mathcal{F}_{nj}&(x,t) = F_{nj}(x,t) - \delta_{nj}F_{nn}(x,t),
\qquad j=1,\dots,n,\\
\VS
\mathcal{F}_{ms}&(x,t)
=
F_{ms}(x,t) - \delta_{ms}F_{nn}(x,t) \\
  & + D_{x_s}
\left[
\int_{\mathbb{R}^n_+}
\Big(
D_{y_q}\mathcal{N}(x,y)F_{mq}(y,t)
+
D_{y_n}\mathcal{N}(x,y)F_{nm}(y,t)
-
D_{y_m}\mathcal{N}(x,y)F_{nn}(y,t)
\Big)\,dy
\right],\\
\VS
\mathcal{F}_{mn}&(x,t)
=
-\hat{\delta}_{rs}
\int_{\mathbb{R}^n_+}
D_{x_n}\mathcal{N}(x,y)F_{ms}(y,t)\,dy
+
D_{x_m}
\int_{\mathbb{R}^n_+}
D_{x_n}\mathcal{N}(x,y)F_{nn}(y,t)\,dy \\
& - F_{nm}(x,t)
- \hat{\delta}_{rs}D_{x_r}
\int_{\mathbb{R}^n_+}
D_{y_s}\mathcal{N}^{-}(x,y)
\big(F_{mn}(y,t)+F_{nm}(y,t)\big)\,dy,
\end{align*}
where
\be \label{N}
\mathcal{N}(x,y)=\mathcal{N}^{+}(x,y)
= \mathcal{E}(x-y)+\mathcal{E}(x-y^{*}),
\qquad
\mathcal{N}^{-}(x,y)
=\mathcal{E}(x-y)-\mathcal{E}(x-y^{*}).
\ee

Thus,
\begin{equation*}
\mathtt{f}_j
=
D_{x_i}F_{ij}
-
D_{x_j}F_{nn}
-
\hat{\delta}_{rm}\hat{\delta}_{nj}
D_{x_r}(F_{mn}+F_{nm})
+
h_j,
\end{equation*}

where $h_j$ is a linear combination of terms of the kind
\be \label{hj}
D_{x_m x_s}
\int_{\mathbb{R}^n_+}
D_{y_q}\mathcal{N}^{\pm}(x,y)F_{pr}(y,t)\,dy,
\qquad m,s \neq n.
\ee

Therefore, if we also assume that $F_{ij \,| x_n=0}=0$, then after integration by parts, we can write the last term on the right-hand side of $\eqref{rap}_1$ in the form
\be \label{rapfab}
\begin{split}
&\int_0^t \int_{\mathbb{R}^n_+}
G_{ij}(t-\tau,x,y)\mathtt{f}_j(y,\tau)\,dy\,d\tau
=\\
&-\int_0^t \int_{\mathbb{R}^n_+}
D_{y_p}G_{ij}(t-\tau,x,y)F_{pj}(y,\tau)\,dy\,d\tau 
+
\int_0^t \int_{\mathbb{R}^n_+}
D_{y_j}G_{ij}(t-\tau,x,y)F_{nn}(y,\tau)\,dy\,d\tau\\
&+
\hat{\delta}_{rm}
\int_0^t \int_{\mathbb{R}^n_+}
D_{y_r}G_{in}(t-\tau,x,y)
(F_{mn}+F_{nm})(y,\tau)\,dy\,d\tau+
\int_0^t \int_{\mathbb{R}^n_+}
G_{ij}(t-\tau,x,y)h_j(y,\tau)\,dy\,d\tau. 
\end{split}
\ee
Now, integrating by parts twice and taking in account that $D_{x'}G(s,x,y)=- D_{y'}G(s,x,y)$, the last term can be written as a linear combination of the expressions
\small{\begin{equation}\label{lincomK}
D_{x_s x_m} \!\!\int_0^t \!\!\!\int_{\R^n_+}\!\!\!\! G_{ij}(t \!-\!\tau,x,y)\!\bigg(\!\int_{\R^n_+}\!\!\!\!\!\!D_{y_q}\mathcal{N}^{\pm}(y,z)F_{pr}(z,\tau)\, dz \!\bigg) dy \,d\tau \!= \!\!\int_0^t \!\!\!\int_{R^n_+}\!\!\!\!\!\! D_{x_s x_m} K_{ijq} (t\! -\!\tau, x,z) F_{pr}(\tau, z) \, dz\,,
\end{equation}}
where
\begin{equation}\label{Kdef}
    K_{ijq}(s,x,z)=\int_{\R^n_+}G_{ij}(s,x,y) D_{y_q}\mathcal{N}^{\pm}(y,z)\,dy\,.
\end{equation}
In order to derive suitable estimates for the solution to the problem, we recall below some useful properties of this kernel.
\begin{lemma}\label{lemma9SPK}
    \sl{Let $l,h\in \N^n$ be multi-indices with $|l|,|h|\geq0$. Write $h=(h',h_n)$ and $l=(l',l_n)$ , where $l',h'\in \N^{n-1}$. Then, the kernel $K_{ijq}$ satisfies the estimate
    \begin{equation}\label{SKKCM}
       |D_{s,x, z' }^{k,h,l'}K_{ijq}(s,x,z)|\leq \ssfrac{c}{(|x-z|^{2}+s)^{\frac{n-1+|h|+|l'|+2k}{2}}}\,+ cs^{-k}\ssfrac{(x_n^2+s)^{-\frac{h_n}{2}}}{(|x-z|^{2}+s)^{\frac{n-1+|h'|+|l'|}{2}}}+c\ssfrac{[(x_n-z_n)^2+s]^{-\frac{h_n+2k}{2}}}{(|x-z|^{2}+s)^{\frac{n-1+|h'|+|l'|}{2}}}\,.    
    \end{equation}
    uniformly in $x$ and $z\in \R^n_+$, where  $s>0$, provided that $|l'|=0,1$. In particular, for $|l'|=k=0$, we have
    \begin{equation}\label{SKKS}
       |D_{x'}^{h'}K_{ijq}(s,x,z)|\leq \frac{c}{(|x-z|^{2}+s)^{\frac{n-1+|h'|}{2}}}\,. 
    \end{equation}
    }
\end{lemma}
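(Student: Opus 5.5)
We prove \eqref{SKKCM} directly from the definition \eqref{Kdef}. We split $G_{ij}$ according to \eqref{Gkernel} into the heat part $\delta_{ij}(H(s,x-y)-H(s,x-y^*))$ and the correction $G^*_{ij}$, and we estimate the contribution of each to $K_{ijq}$ separately.

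\emph{Heat part.} For the heat part we move derivatives onto the convolution structure. Set $\mathcal{N}^{\pm}(y,z)=\mathcal{E}(y-z)\pm\mathcal{E}(y-z^*)$. Derivatives in $x'$ fall on $H(s,x-y)$ and, after the identity $D_{x'}H(s,x-y)=-D_{y'}H(s,x-y)$ and integration by parts in $y'$ (no boundary terms arise), they transfer to $D_{y_q}\mathcal{N}^\pm(y,z)$. Since $\mathcal{N}^\pm$ depends on $y'-z'$, they become $z'$-derivatives. The same trick handles $D_{z'}^{l'}$, and time derivatives become $\Delta_x$ via the heat equation. We then extend $y\mapsto y^*$ across the boundary, so that the heat part becomes a whole-space convolution $H(s)\ast \partial^{\beta}\mathcal{E}$ with $|\beta|=1+|h'|+|l'|+\dots$, which gives a reflected term centered at $z$ or $z^*$. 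The standard estimate
\[
|H(s)\ast D^{\beta}\mathcal{E}(\cdot-z)|(x)\le c\,(|x-z|^2+s)^{-\frac{n-2+|\beta|}{2}},
\]
proved by splitting the convolution into $|y-z|<\frac12(|x-z|+\sqrt s)$ and its complement, yields the first term on the right of \eqref{SKKCM}. Since $|x-z^*|\ge|x-z|$, the reflected term obeys the same bound. The $x_n$-derivatives cannot be moved to $z$. For these we differentiate the Gaussian directly, keeping $n-1+|h'|+|l'|$ derivatives on $\mathcal{E}$ in the tangential variables. This produces the factor $[(x_n-z_n)^2+s]^{-\frac{h_n+2k}{2}}$, i.e.\ the third term.

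\emph{Correction part $G^*_{ij}$.} Here we use \eqref{SPNI}: the $G^*$ piece decays like $(|x-y^*|^2+s)^{-\frac{n+|h'|}{2}}$ in the tangential directions and is localized in $y_n\lesssim\sqrt s$ by $e^{-\hat a y_n^2/s}$. Each $x_n$-derivative costs $(x_n^2+s)^{\frac{h_n}{2}}$ times $s^{-h_n}$ effectively. More precisely, the $x_n$-derivatives on $G^*$ produce $(x_n^2+s)^{h_n/2}$ times a kernel of order $n+h_n$; after integrating in $y_n$ over a layer of width $\sqrt s$ this gives the factor $(x_n^2+s)^{-h_n/2}$ appearing in the second term. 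We move tangential derivatives onto $\mathcal{N}^\pm$ as before, using $D_{x'}G=-D_{y'}G$. We then bound
\[
\int_{\R^n_+} s^{-k}(|x-y^*|^2+s)^{-\frac{n+|h'|}{2}}e^{-\hat a y_n^2/s}\,|D\mathcal{N}^\pm(y,z)|\,dy,
\]
with $|D\mathcal{N}^\pm(y,z)|\le c|y-z|^{1-n}$. We split $\R^n_+$ into the regions $|y-z|<\frac12|x-z|$, $|y-x|<\frac12|x-z|$, and the rest. Using $\int e^{-\hat a y_n^2/s}dy_n\le c\sqrt s$ and the $(n-1)$-dimensional integrability of the tangential kernel, we get $(|x-z|^2+s)^{-\frac{n-1+|h'|+|l'|}{2}}$, i.e.\ the second term. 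The restriction $|l'|\le1$ appears precisely here: for $|l'|\ge2$ the derivatives $D^{l'}_{z'}D_{y_q}\mathcal{N}^\pm$ are not locally integrable near $y=z$ in a way compatible with the layer estimate. Finally, \eqref{SKKS} follows by setting $h_n=k=|l'|=0$, because then all three terms reduce to the common bound $(|x-z|^2+s)^{-\frac{n-1+|h'|}{2}}$.

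\emph{Main obstacle.} The hardest step is the correction-part integral in the near-field region $|y-z|\lesssim\sqrt s$ when $z_n$ is small, where the Gaussian layer and the singularity of $D\mathcal{N}$ overlap. If the direct splitting fails there, I would instead rely on the explicit formula for $G^*$ as $\partial_{x_i}\int\partial_{z_n}\mathcal{E}(x-z)H(t,x-z^*)\,dz$. This rewrites $K$ as an iterated potential of $\mathcal{E}$ against heat kernels, which can be estimated by the same whole-space convolution bound; this is the route of Solonnikov \cite{Solonn}, which I would follow.
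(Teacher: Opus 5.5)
\paragraph{Verdict.} The paper gives no argument for this lemma. It refers to \cite{CMon} for \eqref{SKKCM} and to Solonnikov \cite{Solonn} for \eqref{SKKS}. Two things in your proposal are fine: the observation that \eqref{SKKS} is just \eqref{SKKCM} with $h_n=k=|l'|=0$, and the overall architecture (heat part versus $G^*$, near/far splitting, tangential translation invariance), which is the standard one. The proposal is nevertheless not a proof as written. The central $G^*$ estimate fails, the normal derivatives are mishandled, and your fallback to Solonnikov's explicit representation amounts to the same citation the paper makes.

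\paragraph{The $G^*$ estimate.} You bound $\int_{\R^n_+}s^{-k}(|x-y^*|^2+s)^{-\frac{n+|h'|}{2}}e^{-\hat a y_n^2/s}|y-z|^{1-n}\,dy$, keeping the tangential derivatives on $G^*$ and taking absolute values. This cannot give the claimed decay. Take $k=h_n=0$, $l'=0$, $|h'|\ge1$, $x_n\lesssim\sqrt s$ and $R=|x-z|\gg\sqrt s$. The region $|y-x|\lesssim\sqrt s$ alone contributes about $s^{-|h'|/2}R^{1-n}$, whereas the right-hand side of \eqref{SKKCM} is then $c(R^2+s)^{-\frac{n-1+|h'|}{2}}$.

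The decay in $R$ comes from the cancellation $\int_{\R^{n-1}}D_{y'}G^*\,dy'=0$. You need to move the tangential derivatives onto $\mathcal N^\pm$ away from $z$, and keep them on $G^*$ near $z$, because $|D^{1+|h'|+|l'|}\mathcal N^\pm(y,z)|\sim|y-z|^{1-n-|h'|-|l'|}$ is not locally integrable. This cutoff and integration-by-parts redistribution is the real content of the lemma, and it is missing from your argument.

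It also shows your explanation of the restriction $|l'|\le1$ is wrong. Already $|l'|=1$ produces the non-integrable singularity $|y-z|^{-n}$. Moreover $K$ depends on $x',z'$ only through $x'-z'$, so $D_{z'}K=-D_{x'}K$: $z'$-derivatives are nothing but $x'$-derivatives.

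\paragraph{The normal derivatives.} There are three problems.
\begin{itemize}
\item You start from the exponent $+\frac{h_n}{2}$ in \eqref{SPNI}. This is evidently a misprint: Solonnikov's bound has $(x_n^2+t)^{-h_n/2}$, consistent with the second term of \eqref{SKKCM}. You then assert that integrating in $y_n$ turns it into $(x_n^2+s)^{-h_n/2}$, which is impossible. With the correct bound the factor is independent of $y$ and simply comes out of the integral.
\item In the heat part, the odd reflection of $D_{y_q}\mathcal N^+$ ($q<n$), or of $D_{y_n}\mathcal N^-$, is $\mathrm{sgn}(y_n)$ times a function that does not vanish on $\{y_n=0\}$. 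So you do not get a whole-space convolution with a derivative of $\mathcal E$. An $x_n$-derivative, once integrated by parts, produces the trace term $2\int_{\R^{n-1}}H(s,x'-y',x_n)\,D_{y_q}\mathcal N^\pm((y',0),z)\,dy'$. For $x_n\sim\sqrt s\ll z_n\sim|x-z|$ this term has size $s^{-1/2}|x-z|^{1-n}$. It is dominated only by the second term of \eqref{SKKCM}, not by the first or third as you claim.
\item Differentiating the Gaussian directly yields $s^{-\frac{h_n+2k}{2}}$, not $[(x_n-z_n)^2+s]^{-\frac{h_n+2k}{2}}$. For $h_n\ge1$ and $x_n,\,|x_n-z_n|\gg\sqrt s$ this exceeds every term on the right-hand side. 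The improvement requires transferring $y_n$-derivatives onto $\mathcal E$ away from $z$, which again generates boundary terms.
\end{itemize}
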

\begin{proof}
    We refer to \cite{CMon} for the proof of estimate \eqref{SKKCM}. The second one is due to Solonnikov and was proved in \cite{Solonn}, Proposition$\,$3.1.
\end{proof}

\begin{lemma}\label{stimalrKlemma}
\sl{
Let $K$ be the kernel defined in \eqref{Kdef}, and let $l$ and $k$ be multi-indices as in the previous lemma. Then, for $r\geq 1$, the following estimate holds
\begin{equation}\label{stimeLrKx}
    \sup_{x\in \R^n_+}\dm D_{s,x, z' }^{k,h,l'}K(s,x)\dm_r \leq c\,s^{-\frac{n+|h|+|l|+|l'|-1}{2}+\frac{n}{2r}}\,,
\end{equation}
}
and, similarly,
\begin{equation}\label{stimeLrKz}
    \sup_{z\in \R^n_+}\dm D_{s,x, z' }^{k,h,l'}K(s,z)\dm_r \leq c\,s^{-\frac{n+|h|+|l|+|l'|-1}{2}+\frac{n}{2r}}\,,
\end{equation}
for all fixed $s>0$.
\end{lemma}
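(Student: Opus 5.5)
We plan to argue exactly as in the proof of Lemma \ref{lemmaStimeyoungKernell}, now using the pointwise bound \eqref{SKKCM} of Lemma \ref{lemma9SPK} in place of \eqref{SKIC}. We fix $s>0$ and $x\in\R^n_+$, extend $D^{k,h,l'}_{s,x,z'}K(s,x,\cdot)$ by zero to $\R^n$, and split the $L^r$ norm in $z$ into the three contributions coming from the three terms on the right-hand side of \eqref{SKKCM}:
\[
\dm D^{k,h,l'}_{s,x,z'}K(s,x)\dm_r^r\leq c\,(J_1+J_2+J_3).
\]
Each $J_i$ will be estimated uniformly in $x$ by rescaling the variables with $s^{1/2}$. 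The estimate \eqref{stimeLrKz} follows in the same way, since \eqref{SKKCM} is symmetric in the roles of $x$ and $z$ except for the factors $(x_n^2+s)$ and $((x_n-z_n)^2+s)$. These factors depend only on the fixed point in the first case, and on $x_n-z_n$ in the other, and both are treated identically.

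For $J_1=\int_{\R^n}(|x-z|^2+s)^{-\frac{n-1+|h|+|l'|+2k}{2}r}dz$ we substitute $z=x+s^{1/2}\zeta$. This gives
\[
J_1=c\,s^{-\frac{(n-1+|h|+|l'|+2k)r-n}{2}}\int_{\R^n}(1+|\zeta|^2)^{-\frac{(n-1+|h|+|l'|+2k)r}{2}}d\zeta .
\]
The integral converges precisely when $(n-1+|h|+|l'|+2k)r>n$. This holds for $r\geq1$ as soon as $|h|+|l'|+2k\geq 2$, and for $r>\frac{n}{n-1}$ in the borderline cases. I will note this restriction; in the applications we only use the derivative orders for which the condition is met.

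For $J_2$ the factor $(x_n^2+s)^{-h_n r/2}$ is simply bounded by $s^{-h_n r/2}$ and pulled out, together with $s^{-kr}$. The remaining integral is handled as $J_1$, giving
\[
J_2\leq c\,s^{-kr-\frac{h_n r}{2}}\,s^{-\frac{(n-1+|h'|+|l'|)r-n}{2}} .
\]
This produces the same total power of $s$.

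The main obstacle is $J_3$, because the factor $[(x_n-z_n)^2+s]^{-\frac{h_n+2k}{2}}$ couples with the normal variable. Here I would proceed as for $I_2$ in the proof of Lemma \ref{lemmaStimeyoungKernell}. First we integrate in $z'\in\R^{n-1}$ with $z'=x'+s^{1/2}\zeta'$. Since $|x-z|^2+s\geq |x'-z'|^2+s$, this yields
\[
\int_{\R^{n-1}}(|x'-z'|^2+(x_n-z_n)^2+s)^{-\frac{(n-1+|h'|+|l'|)r}{2}}dz'\leq c\,((x_n-z_n)^2+s)^{-\frac{(n-1+|h'|+|l'|)r-(n-1)}{2}}.
\]
Then we integrate in $z_n$ the product
\[
((x_n-z_n)^2+s)^{-\frac{(h_n+2k)r+(n-1+|h'|+|l'|)r-(n-1)}{2}}
\]
with the substitution $z_n=x_n+s^{1/2}\sigma$. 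This gives $c\,s^{-\frac{(n-1+|h|+|l'|+2k)r-n}{2}}$, provided the total exponent exceeds $\frac12$, which is the same integrability condition as before. Collecting the three bounds and taking the $r$-th root gives
\[
\sup_x\dm D^{k,h,l'}K(s,x)\dm_r\leq c\,s^{-\frac{n-1+|h|+|l'|+2k}{2}+\frac{n}{2r}} .
\]
Reading the multi-index $l$ in \eqref{stimeLrKx} as the tangential derivatives $l'$ (with $k$ the time derivatives), this is the claimed rate.
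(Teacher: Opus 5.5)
Your route is the paper's: its proof only says to rerun the proof of Lemma~\ref{lemmaStimeyoungKernell} with \eqref{SKKCM} in place of \eqref{SKIC}. Your scaling computations do give the rate $s^{-\frac{n-1+|h|+|l'|+2k}{2}+\frac{n}{2r}}$ for each of the three pieces.

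The gap is in the integrability bookkeeping, which you tie to the wrong order. In $J_2$ the factors $s^{-kr}(x_n^2+s)^{-h_nr/2}$ are constant in $z$. What remains is $(|x-z|^2+s)^{-\frac{(n-1+|h'|+|l'|)r}{2}}$, which is integrable over $\R^n_+$ only if $(n-1+|h'|+|l'|)r>n$. So $J_2$ is not ``handled as $J_1$'' under $(n-1+|h|+|l'|+2k)r>n$. Similarly, the $z'$-integration in $J_3$ needs $(n-1+|h'|+|l'|)r>n-1$, on top of your condition on the $z_n$-integral.

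Hence your claim that everything works for all $r\ge1$ once $|h|+|l'|+2k\ge2$ fails when the derivatives are normal or temporal. Take $r=1$, $h=(0,\dots,0,2)$, $k=0$, $l'=0$. The second term of \eqref{SKKCM} is then $c\,(x_n^2+s)^{-1}(|x-z|^2+s)^{-\frac{n-1}{2}}$, which is not in $L^1_z(\R^n_+)$. The $z'$-integral in $J_3$ also diverges logarithmically. So \eqref{SKKCM} alone gives nothing there.

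The condition under which your argument (and the paper's) actually works is $(n-1+|h'|+|l'|)r>n$, which implies all three convergence requirements. It holds in the only application, Lemma~\ref{LEmmaPNLGRADLq}, where at least two of the three $x$-derivatives are tangential and $r=q'>1$. Your caveat is therefore warranted (the paper's ``for $r\ge1$'' is not supported by \eqref{SKKCM} either), but as you state it, it is wrong.

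Three smaller points. First, the bound you display for the $z'$-integral comes from the substitution $z'=x'+((x_n-z_n)^2+s)^{1/2}\zeta'$. The inequality $|x-z|^2+s\ge|x'-z'|^2+s$ that you cite would discard the $(x_n-z_n)^2$ you need next; without it the $z_n$-integral diverges when $h_n=k=0$.

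Second, for \eqref{stimeLrKz} the integration variable is $x$, so $(x_n^2+s)$ does not depend only on the fixed point. This is harmless, since $(x_n^2+s)^{-h_n/2}\le s^{-h_n/2}$ uniformly.

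Third, identifying $l$ with $l'$ does not reconcile the stated exponent $n-1+|h|+|l|+|l'|$ with your $n-1+|h|+|l'|+2k$; it would give $2|l'|$. The consistent reading is that $|l|$ in the statement is a misprint for $2k$. This is immaterial where the lemma is used, since there $k=|l'|=0$.
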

\begin{proof}
    The pointwise estimate \eqref{SKKCM} allows one to repeat the proof of the previous lemma; hence, the details are omitted.  
\end{proof}
\subsection{The case \texorpdfstring{$f=0$}{f=0}}
The previous paper \cite{DFP} is devoted to the study of IBVP for the Stokes system in the half-space in the same Lebesgue weighted functional spaces. In this section, we summarize the results obtained and refine them to suit the purposes of the present work.\par
We start to recall a classical result regarding the problem \eqref{Stokesf} with $f\equiv 0$.
\begin{lemma}\label{SHE}{\sl Let $\varphi_0\in \mathscr C_0(\R^n_+)$. Then we get a unique solution to problem \rf{Stokesf}, with $f\equiv 0$, such that
\be\label{RHE}\varphi(t, x)\in C^k(0,T;{\underset{q>1}\cap} J^q(\R^n_+))\cap({\underset{q>1}\cap} L^q(0,T;W^{2,q}(\R^n_+))\,,\mbox{ for }k\in\N_0, \mbox{ and for all }T>0\,.\ee
Moreover, the following representation formula holds:\be\label{RHEO} 
\varphi(t,x)=\int_{\R^n_+} \mathcal{G}(t,x,y)\varphi_0(y)\,dy\,.
\ee
In particular, for all $r\geq q>1$, there exists a constant $c$ such that \be\label{SP}\dm \varphi(t)\dm_r+(t-s)^\frac12\dm\n \varphi(t)\dm_r\leq c(t-s)^{-\frac n2\left(\frac1q-\frac1r\right)}\dm \varphi(s)\dm_q\,,\mbox{ for all }t>s\geq0\,.\ee Finally,  for  $q\in(1,\infty)$ and $r\in[1,\infty)$, we get 
\be\label{SPLS}\ba{c}\varphi\in C([0,T);L^{q,r}(\R^n_+))\,,\VS\dm \varphi(t)\dm_{(q,r)}+(t-s)^\frac12\dm\n \varphi(t)\dm_{(q,r)}\leq c\dm \varphi(s)\dm_{(q,r)}\,\mbox{ for all }t>s\geq0\,.\ea\ee
}\end{lemma}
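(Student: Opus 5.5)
The plan is to build the solution from the Green function and read the estimates off the kernel bounds. I will construct $\varphi$ as $\varphi(t,x)=\int_{\R^n_+}\mathcal{G}(t,x,y)\varphi_0(y)\,dy$, i.e. formula \rf{rap} with $f\equiv0$, and verify that it solves \rf{Stokesf}. Since $\varphi_0\in\mathscr C_0(\R^n_+)$ is smooth, compactly supported and solenoidal, we have $P\varphi_0=\varphi_0$, and the Solonnikov representation of \cite{solo,soln} applies directly. The pressure is given by the $\mathcal Q$-integral, and \eqref{SPNI} guarantees that it is locally integrable. Uniqueness is the classical one in $L^q$-spaces: the difference of two solutions lies in $L^q$ with zero data, so testing against the adjoint Stokes problem (or using energy methods when $q\ge 2$, then duality) gives that it vanishes.

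The regularity \rf{RHE} comes from $L^q$-theory. For every $q>1$, $\varphi_0\in J^q(\R^n_+)\cap D(A_q)^k$ for all $k$, where $A_q$ is the Stokes operator. Since the Stokes semigroup on the half-space is analytic in $J^q$ (Ukai, Borchers–Miyakawa, Desch–Hieber–Prüss), $\varphi\in C^k([0,T);J^q)$. Maximal regularity, or simply $A_q\varphi(t)=e^{-tA_q}A_q\varphi_0$ together with the resolvent $W^{2,q}$ estimate, gives $\varphi\in L^q(0,T;W^{2,q})$. The representation \rf{RHEO} then identifies the semigroup with the Green-function integral, by uniqueness.

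For \rf{SP}, I will use the semigroup property $\varphi(t)=\int\mathcal G(t-s,x,y)\varphi(s,y)\,dy$. Young's inequality with Lemma \ref{lemmaStimeyoungKernell} does the rest: the kernel bounds \eqref{StimeyoungKernellx} and \eqref{StimeyoungKernelly} hold uniformly in $x$ and in $y$, so the generalized Young (Schur) inequality with exponent $1+\frac1r=\frac1\sigma+\frac1q$ gives
\[
\dm\varphi(t)\dm_r\le c\,(t-s)^{-\frac n2+\frac n{2\sigma}}\dm\varphi(s)\dm_q=c\,(t-s)^{-\frac n2(\frac1q-\frac1r)}\dm\varphi(s)\dm_q .
\]
Taking $|h|=1$ in the same lemma yields the extra factor $(t-s)^{-1/2}$ for $\n\varphi$.

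The Lorentz estimate \rf{SPLS} is the step I expect to be the main obstacle, since Young's inequality in $L^{q,r}$ needs care. I would handle it by real interpolation. For fixed $t>s$, the operators $g\mapsto\int\mathcal G(t-s,\cdot,y)g\,dy$ and $g\mapsto\int\n_x\mathcal G(t-s,\cdot,y)g\,dy$ are bounded on $L^{q_0}$ and $L^{q_1}$ for $q_0<q<q_1$, with norms $c$ and $c(t-s)^{-1/2}$ respectively; this is the case $r=q$ of the previous step. The Marcinkiewicz–Hunt interpolation theorem, using $(L^{q_0},L^{q_1})_{\theta,r}=L^{q,r}$, then gives the bound with the same time powers. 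The bound is applied to general $g\in L^{q,r}$ rather than only to solenoidal fields, which avoids interpolating the space $J$. Continuity in $L^{q,r}$ at $t=0$ follows from strong continuity on $L^{q_0}\cap L^{q_1}$, which contains $\varphi_0$. Continuity at $t>0$ follows from the time regularity already obtained, together with the embedding $L^{q_0}\cap L^{q_1}\subset L^{q,r}$, which holds for $r\ge1$.
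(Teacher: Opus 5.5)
Your outline follows the paper's route: the paper simply quotes Solonnikov's classical $L^q$ theory for existence, uniqueness and \rf{RHE}--\rf{SP}, and obtains \rf{SPLS} by real interpolation. However, your own derivation of \rf{SP} fails in the case $r=q$ for $\varphi$ itself, and that is exactly the bound your interpolation step needs.

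With $r=q$ the Schur/Young exponent is $\sigma=1$, so you need $\sup_y\dm\mathcal G(t,\cdot,y)\dm_{L^1_x}<\infty$. The Gaussian part satisfies this. The correction $G^*$, however, only obeys $|G^*(t,x,y)|\le c(|x-y^*|^2+t)^{-\frac n2}e^{-\hat a\frac{y_n^2}{t}}$. Integrating in $x'\in\R^{n-1}$ leaves $c((x_n+y_n)^2+t)^{-\frac12}$, which is not integrable on $x_n\in(0,\infty)$. This is not a loss in the estimate. As $|x|\to\infty$, $G^*(t,x,y)$ behaves like a second derivative of $\mathcal E$ centred at $(y',0)$, times a positive factor depending only on $(y_n,t)$. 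It is therefore homogeneous of degree $-n$, and $\mathcal G(t,\cdot,y)\notin L^1(\R^n_+)$. Hence \eqref{StimeyoungKernelly} cannot hold for $r=1$, $|h|=|l|=k=0$. Your step ``Young's inequality does the rest'' then gives neither $\dm\varphi(t)\dm_q\le c\dm\varphi(s)\dm_q$ nor the $L^{q_0}$, $L^{q_1}$ bounds you interpolate, so \rf{SPLS} is also left unproved.

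The remaining cases are sound. For $r>q$ one has $\sigma>1$ and the $x_n$-integral converges. For $\n\varphi$ the extra decay makes $\sigma=1$ admissible: tangential derivatives gain a factor $(|x-y^*|^2+t)^{-\frac12}$, and the normal one gains $(x_n^2+t)^{-\frac12}$ in Solonnikov's bound.

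The repair is to exploit $q>1$ in the normal variable instead of using Schur's test. Since $(x_n+y_n)^2\ge x_n^2$,
\[
\Big|\int_{\R^n_+}G^*(t,x,y)\,g(y)\,dy\Big|\le c\int_{\R^{n-1}}\frac{\tilde g(y')\,dy'}{(|x'-y'|^2+x_n^2+t)^{\frac n2}}\,,\qquad \tilde g(y'):=\int_0^\infty e^{-\hat a\frac{y_n^2}{t}}\,|g(y',y_n)|\,dy_n\,.
\]
Then:
\begin{itemize}
\item H\"older in $y_n$ gives $\dm\tilde g\dm_{L^q(\R^{n-1})}\le c\,t^{\frac1{2q'}}\dm g\dm_q$.
\item Young in $x'$ gives a factor $c(x_n^2+t)^{-\frac12}$.
\item $\big(\int_0^\infty(x_n^2+t)^{-\frac q2}dx_n\big)^{\frac1q}=c\,t^{-\frac1{2q'}}$, which is finite precisely because $q>1$.
\end{itemize}
So $g\mapsto\int\mathcal G(t,\cdot,y)g\,dy$ is bounded on $L^q(\R^n_+)$ uniformly in $t$, for every $g$, solenoidal or not. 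With this input, your interpolation argument for \rf{SPLS} and the continuity statements go through as written. Alternatively, do as the paper does and quote the uniform $L^q$-boundedness of the half-space Stokes semigroup from the classical literature.
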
\bp The existence, the  uniqueness and properties \rf{RHE}-\rf{SP} are classical results and we refer to monograph \cite{Solonn}. Property \rf{SPLS}    follows from \rf{RHE} and \eqref{SP} via real interpolation.\ep
In \cite{DFP}, we considered the problem \eqref{Stokesf} with $f\equiv0$, under the assumptions that $v_0 \in L^p_{\texttt{w}}(\mathbb{R}^n_{+})$, and $\nabla \cdot v_0=0$ in weak sense. The main results obtained in the quoted work, useful in the present investigation, are the following:
\begin{theorem}\label{Theoremprincipale}
\sl
Let  consider $p>n\geq 3$. Let $w$ be the weight function defined in \eqref{wf}. Then, for all $v_0\in J^p_{\texttt{w}}(\R^n_+)$ there exists a unique smooth solution to the problem \eqref{problem} and admits the represention
\begin{equation}
    v(t,x)=\int_{\R_n^+ }\mathcal{G}(t,x,y)v_0(y)\,dy\,.
\end{equation}
Moreover, there exists a constant $c$ independent of $v_0$ and of $s\geq 0$, such that
\be \label{smgP}
\dm v(t)\dm_{\texttt{w},p}\leq c \,\dm v(s)\dm_{\texttt{w},p}, \quad \mbox{for all } t>s\geq 0, 
\ee
for any $l, k \in \mathbb{N}_0$, $q\in (n,+\infty]$ and for all $t>0$, the following estimate holds
\be \label{RP}
\dm D_t^l \nabla^k v(t)\dm_q
\leq c\, (t-s)^{-\frac{n}{2}(\frac{1}{n}-\frac{1}{q})-\frac{k}{2}-l}\dm v(s)\dm_{\texttt{w},p},  \quad \mbox{for } t>s\geq 0\,,
\ee
and the pressure term fulfills the following estimate
\begin{equation}\label{pressure}
 \dm  \nabla \pi_v(t)\dm_q\leq c\, (t-s)^{-\frac{n}{2}(\frac{1}{n}-\frac{1}{q})-1}\dm v(s)\dm_{\texttt{w},p}\,, \quad \mbox{for } t>s\geq 0\,.  
\end{equation}
Additionally, the solution satisfies the following initial condition
\be \label{LP}
\lim_{t \to 0^+} \dm v(t)-v_0 \dm_{L^p_{\texttt{w}}(\R^n_+)}=0.
\ee
Finally, for any $q\in (n,\infty]$, $\varepsilon>0$, and $T>0$ we have
\be \label{regular_Hs}
v\in C([0,T);J^p_{\texttt{w}}(\R^n_+))\cap C((\varepsilon,T); L^q(\R^n_+))\,.
\ee
\end{theorem}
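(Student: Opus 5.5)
This is Theorem~\ref{Theoremprincipale}, recalled from \cite{DFP}. I would prove it by reducing everything to the representation formula and the kernel bounds \eqref{SKIC}.

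\emph{Step 1: existence and representation.} Approximate $v_0\in J^p_{\texttt{w}}(\R^n_+)$ by a sequence $\{\varphi_{0,m}\}\subset\mathscr C_0(\R^n_+)$; this is possible because $J^p_{\texttt{w}}$ is the completion of $\mathscr C_0$ in the weighted norm. Lemma~\ref{SHE} gives smooth solutions $\varphi_m=\int\mathcal G\varphi_{0,m}\,dy$. The whole proof then reduces to operator bounds for $v_0\mapsto\int_{\R^n_+}\mathcal G(t,x,y)v_0(y)\,dy$ in the weighted setting. Once these are in hand, $\varphi_m$ converges locally uniformly with all derivatives for $t>0$, and the limit $v$ is a smooth solution with the stated representation.

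\emph{Step 2: the estimate \eqref{RP}.} By the semigroup property it suffices to take $s=0$.
\begin{itemize}
\item The heat part $H(t,x-y)-H(t,x-y^*)$ is treated as in the Cauchy case, Theorem~\ref{SPWS}, after odd reflection of $v_0$ across $\{x_n=0\}$. Reflection preserves $L^p_{\texttt{w}}$ up to a constant, since $|x-\bar x_j|\simeq|x^*-\bar x_j|$ fails only near the boundary in a controlled way; alternatively one uses $\texttt{w}$ evaluated at the reflected points.
\item For the $G^*$ part, write
$$|D^k_xv(t,x)|\le\int|D^kG(t,x,y)|\,\texttt{w}'(y)\,|v_0(y)|\,\texttt{w}(y)\,dy$$
and apply H\"older. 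This requires $\sup_x\|D^kG(t,x,\cdot)\,\texttt{w}'\|_{p'}\le c\,t^{-\frac k2-\frac12}$, which I would check via scaling of \eqref{SKIC}. Note that $\texttt{w}'^{p'}$ is locally integrable because $\alpha_jp'<n$, and that $\alpha$ is exactly the scale-invariant exponent, so $\frac n{2p'}-\frac\alpha2-\frac n2=-\frac12$.
\item The $L^q$ bounds for $q\in(n,\infty)$ then follow from Lemma~\ref{interpol} combined with uniform $L^p_{\texttt{w}}$ bounds on $D^kv$, or directly from Young/H\"older with Lemma~\ref{lemmaStimeyoungKernell}. Time derivatives are handled by $D_t=\Delta$ plus pressure, or directly via the $t$-derivatives in \eqref{SKIC}.
\item The pressure bound \eqref{pressure} is obtained the same way from \eqref{SPNI} for $Q_j$.
\end{itemize}

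\emph{Step 3: the main obstacle, \eqref{smgP}.} Weighted boundedness of $v_0\mapsto\mathcal Gv_0$ on $L^p_{\texttt{w}}$ is the hardest point. For the heat part, the weight $\texttt{w}^p$ is an $A_p$ weight because $-n<\alpha_jp<n(p-1)$. Hence the heat convolution, being dominated by the Hardy--Littlewood maximal function, is bounded on $L^p_{\texttt{w}}$ uniformly in $t$ by Muckenhoupt's theorem. For $G^*$, \eqref{SKIC} gives $|G^*(t,x,y)|\le c\,(|x-y^*|^2+t)^{-n/2}$ with the factor $(x_n^2+t)^0$. Again this is dominated by a maximal-type operator evaluated at reflected points, and $\texttt{w}(y^*)$ is comparable to $\texttt{w}$ in the $A_p$ sense. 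I would first try this maximal-function domination. If the reflection comparison fails, I would split the domain into $|y-\bar x_j|<\frac12|x-\bar x_j|$ and its complement and estimate each region by hand.

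\emph{Step 4: continuity and uniqueness.} The limit \eqref{LP} follows from density: it holds for $\mathscr C_0$ data, and the uniform bound \eqref{smgP} transfers it to general $v_0$. The regularity \eqref{regular_Hs} follows from Steps 2--3 together with the smoothness for $t>0$. Uniqueness follows by duality: pair a solution with zero datum against the adjoint solutions of Lemma~\ref{SHE} with data in $\mathscr C_0$, and use that the dual space is $L^{p'}_{\texttt{w}'}$.
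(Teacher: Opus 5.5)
The paper does not prove this theorem; it is quoted from \cite{DFP}, so your argument has to stand on its own. Several parts of it do. The H\"older bound $\sup_x\|D^kG(t,x,\cdot)\,\texttt{w}'\|_{p'}\le c\,t^{-\frac k2-\frac12}$ is correct. With several distinct $\bar x_j$ it is not literally a scaling identity, but generalized H\"older with exponents $\alpha/\alpha_j$ on $\prod_j|y-\bar x_j|^{-\alpha_jp'}$ reduces it to the one-point case. It gives existence, the representation formula, and \eqref{RP} with $q=\infty$.

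\textbf{The gap: the $G^*$ part of \eqref{smgP}.} This uniform bound is what you also use for \eqref{LP}, for the first inclusion in \eqref{regular_Hs}, and, through Lemma~\ref{interpol}, for \eqref{RP} with $q<\infty$. Your proposed domination by a maximal operator is false.

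As you wrote it, without the factor $e^{-\hat a y_n^2/t}$, the majorant $(|x-y^*|^2+t)^{-n/2}$ is not even integrable in $y$: integrating in $y'$ leaves $((x_n+y_n)^2+t)^{-1/2}$. So it cannot be controlled by $Mg\le\|g\|_\infty$.

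With the factor restored, set $T_tg(x)=\int_{\R^n_+}(|x-y^*|^2+t)^{-n/2}e^{-\hat a y_n^2/t}g(y)\,dy$ and take $x=(0,d)$ with $d\gg\sqrt t$. The kernel is then essentially $d^{-1}$ times a Poisson kernel in $x'-y'$, acting on the layer $\{y_n\lesssim\sqrt t\}$. Its least radially decreasing majorant about $x$ (and likewise about $x^*$) behaves like $\min\{d^{-n},r^{-n}\}$ in the radial variable, which is not integrable. Concretely, for $g=|y'|\,\chi_{\{d<|y'|<R,\ 0<y_n<\sqrt t\}}$ one gets $T_tg(x)\simeq\sqrt t\,\log(R/d)$. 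On the other hand, $Mg(x)$, $Mg(x^*)$, and $M$ of the reflected function at $x$ are all $\lesssim\sqrt t$. So neither Muckenhoupt's theorem nor a comparison of $\texttt{w}(y^*)$ with $\texttt{w}(y)$ can close the argument.

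\textbf{What is needed instead.} You need an estimate that uses the boundary-layer structure. Even the unweighted $L^p$-boundedness of $T_t$ holds only for $p>1$, and it is a Hardy-type effect in $x_n$ coming from $\int_{\sqrt t}^\infty x_n^{-p}\,dx_n<\infty$. One workable route:
\begin{enumerate}
\item Cut $y_n$ into layers of width $\sqrt t$; the contributions are summable thanks to $e^{-\hat a y_n^2/t}$.
\item Bound each layer's contribution by $\frac{\sqrt t}{x_n+\sqrt t}$ times an $(n-1)$-dimensional maximal function of a layer average of $|g|$.
\item Prove a weighted Hardy inequality in $x_n$ and a weighted maximal inequality in $x'$. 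This must track the growth $\texttt{w}\simeq|x|^\alpha$ at infinity, and the layers that meet some $\bar x_j$ when $t\gtrsim(\bar x_j)_n^2$.
\end{enumerate}
Your fallback of splitting into $|y-\bar x_j|<\frac12|x-\bar x_j|$ and its complement addresses the wrong difficulty. The points $\bar x_j$ lie at positive distance from $\{x_n=0\}$, where the $G^*$ kernel is harmless. The real obstruction is the tangential tail along the boundary set against the growing weight.

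\textbf{Two smaller points.}

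First, the reflection claim in Step~2 is backwards. For $y\in\R^n_+$ one has $|y^*-\bar x_j|\ge|y-\bar x_j|$, with equality on $\{y_n=0\}$ and ratio tending to $\infty$ as $y\to\bar x_j$. So the odd extension of $g\in L^p_{\texttt{w}}(\R^n_+)$ need not lie in $L^p_{\texttt{w}}(\R^n)$: take $g\simeq|y-\bar x_j|^{-\alpha_j-\frac np+\epsilon}$ near $\bar x_j$, with $0<\epsilon<\alpha_j$. This is easily repaired. Since $H(t,x-y^*)\le H(t,x-y)$ for $x,y\in\R^n_+$, the zero extension suffices, and your $A_p$/maximal argument then does handle the heat part.

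Second, \eqref{RP} for $q\in(n,\infty)$ can be decoupled from \eqref{smgP}. Since $\texttt{w}'\in L^{\frac n\alpha,\infty}$, H\"older's inequality in Lorentz spaces gives $L^p_{\texttt{w}}\hookrightarrow L^{n,p}$. Real interpolation of the estimates \eqref{SP} in Lemma~\ref{SHE} then yields the $L^{n,p}\to L^q$ bound with rate $t^{-\frac n2(\frac1n-\frac1q)}$. What remains genuinely unproved is \eqref{smgP} itself, and with it \eqref{LP}.
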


\begin{coro}\label{andamenti_asintotici}
\sl{
Let $v(t,x)$ be the solution given in the Theorem \ref{Theoremprincipale} and let $\alpha=1-\frac{n}{p}$, with $p>n\geq3$. Then, there exists a suitable $R>0$ such that 
\begin{equation}\label{pe}
    |v(t,x)|\leq c  \, |x|^{-\alpha}\, t^{-\frac{n}{2p}}\,\dm v_0\dm_{L^p_{\texttt{w}}(\R^n_+)}\,, \quad \mbox{for all }t>0\,,\mbox{ and }|x|>4R\,. 
\end{equation}}
\end{coro}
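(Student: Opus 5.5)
We want to prove Corollary \ref{andamenti_asintotici}: for $|x|>4R$ the solution decays like $|x|^{-\alpha}t^{-\frac n{2p}}\dm v_0\dm_{\texttt{w},p}$. The idea is that far from the poles $\bar x_j$ the weight behaves like $|x|^\alpha$. We then apply a Hölder argument to the representation $v(t,x)=\int_{\R^n_+}\mathcal G(t,x,y)v_0(y)\,dy$, using the pointwise kernel bound \eqref{SKIC}.

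\textbf{Choice of $R$ and splitting.} Choose $R$ so that all $\bar x_j\in B_R(0)$. Then for $|y|>2R$ we have $|y-\bar x_j|\ge \frac12|y|$, hence $\texttt{w}(y)\ge c|y|^\alpha$. Fix $x$ with $|x|>4R$ and split the $y$-integration into $D_1=\{|y|\le |x|/2\}$ and $D_2=\{|y|>|x|/2\}$.

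\textbf{Estimate on $D_2$.} Here $|y|>2R$, so $\texttt{w}(y)^{-1}\le c|y|^{-\alpha}\le c|x|^{-\alpha}$. By Hölder,
$$\Big|\int_{D_2}\mathcal G v_0\,dy\Big|\le c|x|^{-\alpha}\dm\mathcal G(t,x,\cdot)\dm_{p'}\dm v_0\dm_{\texttt{w},p}\le c|x|^{-\alpha}t^{-\frac n{2p}}\dm v_0\dm_{\texttt{w},p},$$
where the kernel norm follows from \eqref{StimeyoungKernellx} with $r=p'$, since $-\frac n2+\frac n{2p'}=-\frac n{2p}$.

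\textbf{Estimate on $D_1$.} Here $|x-y|\ge |x|/2$ and $|x-y^*|\ge|x-y|\ge|x|/2$. By \eqref{SKIC} with no derivatives, $|\mathcal G(t,x,y)|\le c(|x|^2+t)^{-\frac n2}$. Then
$$\Big|\int_{D_1}\mathcal G v_0\Big|\le c(|x|^2+t)^{-\frac n2}\dm v_0\dm_{\texttt{w},p}\Big(\int_{|y|\le|x|/2}\texttt{w}^{-p'}dy\Big)^{1/p'}.$$

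\textbf{Main obstacle: integrability of $\texttt{w}^{-p'}$ on $D_1$.} This is the step needing care, because the poles $\bar x_j$ may coincide. Near a cluster point, the total exponent is at most $\alpha p'<n$: indeed $\alpha p'=\frac{p-n}{p-1}<1$. So the singularities are integrable, and the integral over $B_{2R}$ is a finite constant $c_R$. On $2R<|y|<|x|/2$ we have $\texttt{w}^{-p'}\le c|y|^{-\alpha p'}$, giving a contribution $\le c|x|^{n-\alpha p'}$. Using $|x|>4R$, the constant $c_R$ is also bounded by $c|x|^{n-\alpha p'}$. Hence the $D_1$ contribution is at most
$$c|x|^{-n}|x|^{\frac n{p'}-\alpha}\dm v_0\dm_{\texttt{w},p}=c|x|^{-\alpha-\frac np}\dm v_0\dm_{\texttt{w},p}.$$

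\textbf{Converting to the claimed rate.} We still need $|x|^{-\frac np}\le t^{-\frac n{2p}}$, which holds only when $|x|\ge t^{1/2}$. To avoid this restriction, keep $(|x|^2+t)^{-\frac n2}$ in place of $|x|^{-n}$. Then
$$(|x|^2+t)^{-\frac n2}|x|^{\frac n{p'}}=(|x|^2+t)^{-\frac n{2p}}\Big(\frac{|x|^2}{|x|^2+t}\Big)^{\frac n{2p'}}\le (|x|^2+t)^{-\frac n{2p}}\le t^{-\frac n{2p}}.$$
Summing the $D_1$ and $D_2$ bounds gives \eqref{pe} for all $t>0$.
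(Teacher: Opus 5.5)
The paper gives no proof of this corollary. It is imported, together with Theorem~\ref{Theoremprincipale}, from \cite{DFP}, so there is no in-paper argument to compare against. Your proof is correct and self-contained:
\begin{itemize}
\item Choosing $R$ with all $\bar x_j\in B_R(0)$ gives $\texttt{w}(y)\ge 2^{-\alpha}|y|^{\alpha}$ for $|y|>2R$.
\item On $\{|y|>|x|/2\}$, H\"older together with \eqref{StimeyoungKernellx} at $r=p'$ produces exactly the factor $t^{-\frac n{2p}}$.
\item On $\{|y|\le|x|/2\}$ three facts close the argument for every $t>0$: the inequality $|x-y^*|\ge|x-y|\ge|x|/2$ (valid since $x_n,y_n>0$); the local integrability of $\texttt{w}^{-p'}$, since $\alpha p'=\frac{p-n}{p-1}<1$ even when poles coincide; and the bound $(|x|^2+t)^{-\frac n2}|x|^{\frac n{p'}}\le (|x|^2+t)^{-\frac n{2p}}$.
\end{itemize}

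You really do need $r=p'>1$ in the far-field step. Without derivatives, the majorant $(|x-y|^2+t)^{-\frac n2}$ coming from \eqref{SKIC} is not integrable in $y$, so the case $r=1$ of Lemma~\ref{lemmaStimeyoungKernell} could not be used there.

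The closest in-paper argument is the one for the related bound \eqref{picpunt} in Lemma~\ref{piclinf}. That proof uses the odd extension for the heat part, then the semigroup property to transfer the $G^*$ part onto a whole-space solution, and finally the whole-space estimate \eqref{SPSCP}. Your argument needs nothing beyond the pointwise kernel bound and the behaviour of $\texttt{w}$ away from its poles. It is the same near/far splitting ($|y|\le|x|/2$ versus $|y|>|x|/2$) that the paper uses for the nonlinear term in Lemma~\ref{PPNLF}. It also makes clear why the restriction $|x|>4R$ appears, and that the constant depends only on $n$, $p$ and the configuration $\{\bar x_j,\alpha_j\}$.
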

We now proceed to the statement and proof of the following results: the former is equivalent to Lemma$\,3$ in the half-space setting, while the latter will allow us to handle the linear part in the next section.\par
For $g\in L^p_{\texttt{w}}(\R^n_+)$, let us define the following function
\be \label{snellimento}
\vf(t,x)= \int_{\R^n_+} \mathcal{G}(t,x,y)g(y)\,dy\,.
\ee
The following results hold
\begin{lemma}\label{piclinf} \sl{
    Let $K_g(t, \rho)$ be the quantity defined in \eqref{krho} and $g\in L^p_{\texttt{w}}(\R^n_+)$. For $\vp$ defined as in \eqref{snellimento}, the following estimates hold
    \begin{equation}\label{stimapiclinf}
        t^{\frac{1}{2}} \dm \vf(t,x)\dm_{\infty} \leq K_g(t,\rho) \hspace{0.7cm} \text{for all } t>0, \rho>0,
    \end{equation}
and, for all $(t,x)\in(0,+\infty)\times\R^n_+$,
\begin{equation}\label{picpunt}
    t^{\frac{n}{2p}}|x|^{\alpha} |\vp (t,x)| \leq K_g(t,\rho)
\end{equation}
with $\alpha=1-\frac np$.
}
\end{lemma}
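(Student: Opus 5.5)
The plan is to reproduce, in the half-space, the localization argument behind \eqref{SLinfSCP}--\eqref{SPSCP} (Lemmas 8 and 9 of \cite{MP}). The only changes are to replace the heat kernel by $\mathcal G$ and to use the pointwise bound \eqref{SKIC} in place of the Gaussian. From \eqref{SKIC} with $h=l=k=0$ one has $|\mathcal G(t,x,y)|\le c\,(|x-y|^2+t)^{-\frac n2}+c\,(|x-y^*|^2+t)^{-\frac n2}e^{-\hat a y_n^2/t}$. Since $|x-y^*|\ge|x-y|$ for $x,y\in\R^n_+$, both terms are bounded by $c\,(|x-y|^2+t)^{-\frac n2}$. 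Where Gaussian decay is needed, one keeps the more precise Gaussian-type form, since the heat-kernel part is exactly Gaussian and the reflected part is controlled as in \cite{DFP}. The point of the argument is that $|\vf(t,x)|$ is dominated by an integral of $|g|$ against a kernel with the same size and decay as the whole-space one, so the splitting used for $H\ast g$ can be repeated.

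For \eqref{stimapiclinf}, fix $x$ and $\rho>0$ and split $\R^n_+=B_\rho(x)\cup(\R^n_+\setminus B_\rho(x))$, writing $\vf=\vf_1+\vf_2$. On $B_\rho(x)$ apply H\"older with weight $\texttt w$:
\[
|\vf_1(t,x)|\le \Big(\int_{B_\rho(x)}|g|^p\texttt w^p\Big)^{\frac1p}\Big(\int_{B_\rho(x)}|\mathcal G(t,x,y)|^{p'}\texttt w'^{p'}(y)\,dy\Big)^{\frac1{p'}} .
\]
The first factor is bounded by $\dm g\dm_{\texttt w,p,\rho}$. The second factor, extended to the whole space, should be bounded by $c\,t^{-\frac12}$, uniformly in $x$, by scaling: since $\sum\alpha_j=\alpha=1-\frac np$, we have $\texttt w'^{p'}=\prod|y-\bar x_j|^{-\alpha_jp'}$, with each $\alpha_jp'<\alpha p'<n$ so the weight is locally integrable. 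The generalized H\"older inequality splitting $\texttt w'$ into its factors, as in \cite{DFP}, then reduces matters to $\sup_x\int(|x-y|^2+t)^{-\frac{np'}{2}}|y-\bar x|^{-\alpha p'}dy\le c\,t^{-\frac{np'}2+\frac n2-\frac{\alpha p'}2}$. This gives exactly $t^{-\frac n{2}\cdot\frac1p-\frac\alpha2}=t^{-\frac12}$ after taking the $1/p'$ power.

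Outside the ball, $|x-y|\ge\rho$, so one splits off the factor $e^{-\rho^2/(8t)}$ from the Gaussian part, using $e^{-|x-y|^2/4t}\le e^{-\rho^2/8t}e^{-|x-y|^2/8t}$. The same weighted H\"older inequality with the remaining kernel then gives $c\,t^{-\frac12}e^{-\rho^2/8t}\dm g\dm_{\texttt w,p}$. Adding the two pieces gives $K_g(t,\rho)$.

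\textbf{Main obstacle.} The hardest step is the $G^*$ part outside the ball. There \eqref{SPNI} supplies only algebraic decay in $|x-y^*|$, together with $e^{-\hat a y_n^2/t}$, and not the factor $e^{-\rho^2/8t}$. I would first try to show directly that for the relevant $y$ the product $(|x-y^*|^2+t)^{-n/2}e^{-\hat a y_n^2/t}$ still carries exponential smallness whenever $|x-y|\ge\rho$. If that fails, the fallback is to absorb this piece into the $c_0\dm g\dm_{\texttt w,p,\rho}$-type term by covering $\R^n_+$ with balls of radius $\rho$ and summing the algebraic tails, which converges since the decay exponent $np'$ exceeds $n$; this may come at the cost of adjusting the constants $c_0,c_1$.

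For \eqref{picpunt}, if $|x|^\alpha\le c\,t^{\frac\alpha2}$ then $t^{\frac n{2p}}|x|^\alpha|\vf|\le c\,t^{\frac n{2p}+\frac\alpha2}|\vf|=c\,t^{\frac12}|\vf|$, and \eqref{stimapiclinf} applies. Otherwise $|x|^2>t$, and one proceeds as in \cite{MP}: use $|x|^\alpha\le c\,(|x-y|^\alpha+|y|^\alpha)$ and redo the two-region H\"older estimate. In the term with $|x-y|^\alpha$, the extra factor is absorbed by the kernel and improves the time exponent to $t^{-\frac n{2p}}$. The term with $|y|^\alpha$ is handled the same way, comparing $|y|^\alpha$ with $\texttt w(y)$ plus bounded contributions near the points $\bar x_j$, in the way this comparison is carried out in Corollary~\ref{andamenti_asintotici} and \cite{DFP}.
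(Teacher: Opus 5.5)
\emph{Verdict: there is a genuine gap in your treatment of \eqref{picpunt}, and it cannot be filled, because \eqref{picpunt} as stated is false near the points $\bar x_j$. Your argument for \eqref{stimapiclinf} can be completed, but the covering step needs a different justification from the one you give.}

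\textbf{The gap in \eqref{picpunt}.} The failing step is where you compare $|y|^\alpha$ with $\texttt{w}(y)$ ``plus bounded contributions near the points $\bar x_j$''. Near $\bar x_j$ the weight $\texttt{w}$ vanishes, while $|y|^\alpha\simeq|\bar x_j|^\alpha>0$. The origin lies on $\partial\R^n_+$, so your first case $|x|^2\le t$ never reaches the $\bar x_j$ when $t$ is small.
\begin{itemize}
\item The leftover piece $\int_{B_r(\bar x_j)}|\mathcal G(t,x,y)|\,|g(y)|\,dy$ can only be bounded as in \eqref{stimapiclinf}, by $c\,t^{-1/2}K_g(t,\rho)$.
\item After multiplying by $t^{\frac n{2p}}$ you are left with $t^{-\frac\alpha2}K_g(t,\rho)$, which is unbounded as $t\to0$.
\end{itemize}

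This is not a weakness of the technique. Here is a counterexample to \eqref{picpunt}.
\begin{itemize}
\item Take $m=1$, $\texttt{w}(y)=|y-\bar x|^\alpha$, and $g=\lambda e_1\mathbf 1_{B_{\sqrt t}(\bar x)}$ with $\sqrt t<\bar x_n/2$.
\item Since $n+\alpha p=p$, one has $\dm g\dm_{\texttt{w},p}=c\lambda t^{1/2}$. Hence $K_g(t,\rho)\le C\lambda t^{1/2}$ for every $\rho$.
\item On the other hand $G^*_{i1}\equiv0$, so $\varphi_1(t,\bar x)=\lambda\int_{B_{\sqrt t}(\bar x)}[H(t,\bar x-y)-H(t,\bar x-y^*)]\,dy\to\lambda\int_{|z|<1}H(1,z)\,dz>0$.
\item So the left side of \eqref{picpunt} at $x=\bar x$ is $\simeq\lambda|\bar x|^\alpha t^{\frac n{2p}}$, and its ratio to the right side is $\gtrsim t^{-\frac\alpha2}\to\infty$.
\end{itemize}
The lemma does not assume $\nabla\cdot g=0$, and a solenoidal bump at the same scale gives the same conclusion.

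This is why Corollary~\ref{andamenti_asintotici}, whose comparison you want to borrow, is restricted to $|x|>4R$. The paper's proof does not escape the problem either. It derives \eqref{picpunt} from \eqref{SPSCP} applied to an extension of $g$, and \eqref{SPSCP}, with the factor $|x|^\alpha$ and weight points $\bar x_j\neq0$, fails on the same example.

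What your splitting can deliver is the bound with $\texttt{w}(x)$ in place of $|x|^\alpha$. To get it, use $\texttt{w}(x)\le\prod_j(|x-y|+|y-\bar x_j|)^{\alpha_j}$ instead of $|x|^\alpha\le c(|x-y|^\alpha+|y|^\alpha)$. Since $\texttt{w}(x)\simeq|x|^\alpha$ for $|x|>4R$, this recovers \eqref{picpunt} there. In that argument the far field of the $G^*$ part must keep the factor $e^{-\hat a y_n^2/t}$. Without it, the covering sum diverges logarithmically for the pieces carrying a positive power of $|x-y|$.

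\textbf{Comparison for \eqref{stimapiclinf}.} Your route differs from the paper's. The paper never localizes $G^*$.
\begin{itemize}
\item It treats $H(t,x-y)-H(t,x-y^*)$ through the whole-space heat flow of the odd extension $g^*$.
\item For $G^*$ it uses the semigroup property to write $\int G^*(t,x,y)g(y)\,dy=\int G^*(\frac t2,x,y)\hat\psi(\frac t2,y)\,dy$.
\item The $\rho$-splitting is thus done only on heat flows, via \eqref{SLinfSCP}, and $G^*(\frac t2,x,\cdot)$ enters only through its uniform $L^1$ bound.
\end{itemize}

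In your direct approach, the first option (extracting $e^{-\rho^2/8t}$ from $G^*$) cannot work, since $G^*$ decays only algebraically in $x'-y'$. The covering fallback does work, but not because $np'>n$.
\begin{itemize}
\item To keep $\dm g\dm_{\texttt{w},p,\rho}$ you must sum the norms $\dm\mathcal G(t,x,\cdot)\,\texttt{w}'\dm_{L^{p'}(B_k)}$, not their $p'$-th powers.
\item For $|x-z_k|\ge3\rho$ these norms are $\lesssim|x-z_k|^{-n}\rho^{\frac n{p'}}\prod_j(|z_k-\bar x_j|+\rho)^{-\alpha_j}$.
\item The sum $\sum_k|x-z_k|^{-n}$ alone diverges over an $n$-dimensional family of balls. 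Convergence comes from the decay of $\texttt{w}'$, and the far balls contribute $c\rho^{-1}\dm g\dm_{\texttt{w},p,\rho}$.
\item This is at most $c\,t^{-1/2}\dm g\dm_{\texttt{w},p,\rho}$ only when $t\le\rho^2$. For $t>\rho^2$, use the global weighted H\"older bound together with $e^{-\rho^2/8t}\ge e^{-1/8}$.
\end{itemize}
With this case distinction, the crude bound $|\mathcal G|\le c(|x-y|^2+t)^{-n/2}$ alone proves \eqref{stimapiclinf}, with no Gaussian splitting. This is simpler than the paper's reduction. It also sidesteps the fact that the reflected data used there need not lie in $L^p_{\texttt{w}}(\R^n)$, since $\texttt{w}(z^*)\le c\,\texttt{w}(z)$ fails near $\bar x_j$.
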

\begin{proof}
Proceeding as in \cite{DFP}, we write
\begin{align*}
    \vf(t,x)=& \int_{\R^n_+} [H(t, x-y)- H(t, x-y^*)] g(y) \, dy + \int_{\R^n_+} G^* (t,x,y) g(y)\, dy \\
= & \int_{\R^n_+}[H(t, x-y)- H(t, x-y^*)]g(y) \, dy \\
& +  4 \sum_{i=1}^{n-1} \partial_{x_i} \int_0^{x_n}
\int_{\mathbb{R}^{n-1}}
\nabla \mathcal{E}(x-y) \int_{\R^n_+}\,H(t,y-z^*)g(z)\,dz\, dy \, .
\end{align*}
Denoting by $g^* (y)$ the odd extension of $g$ in the whole space, we can rewrite the first term as follow
$$\int_{\R^n_+}[H(t, x-y)- H(t, x-y^*)]g(y) \, dy = \int_{\R^n}H(t, x-y) g^*(y) \, dy\,.$$
For the second term, we consider $\hat{g}$ the extension of $g$ defined by
$$\hat{g}(x)=\begin{cases}
    0   &\mbox{if }x_n>0\,,\\
    g(x^*) & \mbox{if }x_n \leq 0\,.
\end{cases}$$
Using the well known semigroup property for the solution of the heat equation, we have
\begin{align*}
&\int_{\R^n_+} G^* (t,x,y) g(y)\, dy =  4 \sum_{i=1}^{n-1} \partial_{x_i} \int_0^{x_n}
\int_{\mathbb{R}^{n-1}}
\nabla \mathcal{E}(x-y) \int_{\R^n}\,H(t,y-z)\hat{g}(z)\,dz\, dy\\
 &= 4 \sum_{i=1}^{n-1} \partial_{x_i} \int_0^{x_n}
\int_{\mathbb{R}^{n-1}}
\nabla \mathcal{E}(x-y) \left( \int_{\R^n} H\left(\frac{t}{2}, y,\xi\right) \hat{\psi}\left(\frac{t}{2},\xi\right) \, d\xi \right)  dy = \int_{\R^n_+} G^* \left(\frac{t}{2} x, y\right) \hat{\psi}\left(\frac{t}{2},y\right) \, dy,
\end{align*}
where $\hat{\psi}\left(\frac{t}{2}, y\right)$ is the solution of Stokes system in the whole space with initial datum $\hat{g}(z)$. Using \eqref{SLinfSCP}, we get
\begin{align*}
|\vp(t,x)|\leq & \int_{\R^n}H(t, x-y) |g^*(y) |\, dy +  \int_{\R^n_+} \left| G^* \left( x, y, \frac{t}{2}\right) \right| \left| \hat{\psi}\left(\frac{t}{2},y\right)\right|\, dy\\
& \leq t^{-\frac{1}{2}} \, K_g(t, \rho)+ t^{-\frac{1}{2}}\, K_g(t, \rho) \int_{\R^n_+} \frac{e^{-\frac{\hat{a}y_n^2}{t}}}{\left[ |x-y^*|^2+\frac{t}{2} \right]^{\frac{n}{2}}}\, dy \leq t^{-\frac{1}{2}}\,K_g(t, \rho)\,,
\end{align*}
from which we deduce \eqref{stimapiclinf}.\par 
To obtain \eqref{picpunt}, we use \eqref{SPSCP} and, proceeding as before, we get
\begin{align*}
|\vp(t,x)|\leq & \int_{\R^n}H(t, x-y) |g^*(y) |\, dy +  \int_{\R^n_+} \left| G^* \left( x, y, \frac{t}{2}\right) \right| \left| \hat{\psi}\left(\frac{t}{2},y\right)\right| \frac{t^{\frac{n}{2p}}|y|^\alpha}{t^{\frac{n}{2p}}|y|^\alpha}\, dy\\
& \leq K_g(t, \rho) \int_{\R^n_+} \frac{e^{-\frac{\hat{a}y_n^2}{t}}}{\left[ |x-y^*|^2+\frac{t}{2} \right]^{\frac{n}{2}}} \frac{1}{t^{\frac{n}{2p}}|y|^\alpha}\, dy  \leq t^{-\frac{n}{2p}} |x|^{-\alpha} K_g(t, \rho)\,,
\end{align*}
which completes the proof.
\end{proof}
\begin{lemma}\label{LemmapiccLq}
 \sl{
    Let $q>n$ and let $K_g(t, \rho)$ as in \eqref{krho}. For all $g \in L^p_{\texttt{w}}(\R^n_+)$ and divergence free in a weak sense, $\vp $ defined in \eqref{snellimento} satisfies
    \begin{equation}\label{piccLq}
        t^{\frac n2(\frac 1n - \frac 1q)}\dm\vp(t)\dm_{q}\leq \dm g\dm_{\texttt{w},p}^{1-\gamma}\, K_g(t,\rho)^{\gamma}\,,
    \end{equation}
    and
    \begin{equation}\label{piccgradLq}
        t^{\frac n2(\frac 1n - \frac 1q)+ \frac 12 }\dm \nabla \vp(t)\dm_{q}\leq\dm g\dm_{\texttt{w},p}^{1-\gamma}\, K_g(t,\rho)^{\gamma}\,, 
    \end{equation}
    for all $t>0$, and $\rho>0$, with $\gamma=1-\frac nq$.
    }
\end{lemma}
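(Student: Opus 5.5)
The first estimate \eqref{piccLq} will come from the interpolation inequality of Lemma \ref{interpol}, applied to $\vp(t)$ with $\gamma=1-\frac nq$. Two ingredients are needed. The first is $\dm\vp(t)\dm_{\texttt{w},p}\le c\dm g\dm_{\texttt{w},p}$, which is the semigroup bound \eqref{smgP} of Theorem \ref{Theoremprincipale} with $s=0$. It applies because $g$ is weakly divergence free; one also uses that the solution in Theorem \ref{Theoremprincipale} has the representation \eqref{snellimento}. The second is the sup bound \eqref{stimapiclinf} of Lemma \ref{piclinf}, $\dm\vp(t)\dm_\infty\le t^{-\frac12}K_g(t,\rho)$. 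Since $\vp(t)$ lies in $L^\infty\cap L^p_{\texttt{w}}$ of the half-space, we extend it by zero to $\R^n$ (the weight is defined on all of $\R^n$), so that Lemma \ref{interpol} applies. This gives
\[
\dm\vp(t)\dm_q\le c\dm g\dm_{\texttt{w},p}^{1-\gamma}\,t^{-\frac\gamma2}K_g(t,\rho)^\gamma .
\]
Since $\frac\gamma2=\frac12-\frac n{2q}=\frac n2\left(\frac1n-\frac1q\right)$, this is exactly \eqref{piccLq}. The constant $c$ is absorbed, as usual, into $c_0,c_1$ in the definition \eqref{krho}.

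For the gradient estimate \eqref{piccgradLq} I would use the semigroup property. Write $\vp(t)=\int_{\R^n_+}\mathcal G(t/2,x,y)\vp(t/2,y)\,dy$, which holds by uniqueness in Theorem \ref{Theoremprincipale}, since $\vp(t/2)\in J^p_{\texttt{w}}\cap L^q$. The $L^q$ gradient estimate \eqref{SP} of Lemma \ref{SHE} with $r=q$ then gives
\[
\dm\n\vp(t)\dm_q\le c\,t^{-\frac12}\dm\vp(t/2)\dm_q .
\]
Lemma \ref{SHE} is stated for data in $\mathscr C_0(\R^n_+)$. It extends by density to data in $J^q(\R^n_+)$, and $\vp(t/2)\in J^q$ because it is divergence free, vanishes on the boundary and lies in $L^q$. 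Applying \eqref{piccLq} at time $t/2$ then gives
\[
t^{\frac n2(\frac1n-\frac1q)+\frac12}\dm\n\vp(t)\dm_q\le c\dm g\dm_{\texttt{w},p}^{1-\gamma}K_g(t/2,\rho)^\gamma .
\]

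The main obstacle is this last step, namely replacing $K_g(t/2,\rho)$ by $K_g(t,\rho)$. The term $c_0\dm g\dm_{\texttt{w},p,\rho}$ does not depend on $t$. The exponential term, however, becomes $e^{-\rho^2/(4t)}$ instead of $e^{-\rho^2/(8t)}$. Since $e^{-\rho^2/(4t)}\le e^{-\rho^2/(8t)}$, we get $K_g(t/2,\rho)\le K_g(t,\rho)$ directly, so the time halving is harmless.

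A second, more delicate point is checking that $\vp(t/2)$ is admissible data for Lemma \ref{SHE}. The density extension needs $\vp(t/2)\in J^q(\R^n_+)$, which follows from \eqref{regular_Hs} together with the weighted Helmholtz theory (Theorem \ref{dec_helmotz_teorema}). If that route proved awkward, I would instead estimate $\n\vp(t)=\int\n_x\mathcal G(t/2,x,y)\vp(t/2,y)\,dy$ directly, using Young's inequality with the kernel bound \eqref{StimeyoungKernellx} for $r=1$ and $|h|=1$. This gives $\sup_x\dm\n_x\mathcal G(t/2,x,\cdot)\dm_1\le ct^{-\frac12}$, and the analogous estimate \eqref{StimeyoungKernelly} in $y$ supplies the Schur test.
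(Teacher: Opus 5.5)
Your proposal is correct and follows the paper's proof. You obtain \eqref{piccLq} from Lemma~\ref{interpol} combined with \eqref{smgP} and the $L^\infty$ bound \eqref{stimapiclinf}, and \eqref{piccgradLq} from $\dm\n\vp(t)\dm_q\le c(t-s)^{-\frac12}\dm\vp(s)\dm_q$ with $s=\frac t2$, followed by \eqref{piccLq} at time $\frac t2$; your check that $K_g(t/2,\rho)\le K_g(t,\rho)$, since $e^{-\rho^2/(4t)}\le e^{-\rho^2/(8t)}$, is the detail the paper leaves inside its ``arguing as before''.
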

\begin{proof}
We can consider the solution $v(t,x)$ to problem \eqref{Stokesf} with
$f\equiv0$ and $g$ as the initial datum. By uniqueness established in Theorem$\,$\ref{Theoremprincipale}, $\vp \equiv v$. Then, by estimate \eqref{RP} with $|l|=|k|=0$, estimate \eqref{smgP}, by Lemma$\,$\ref{interpol} and by Lemma$\,$\ref{piclinf}, for all $t>0$, we have
$$t^{\frac n2(\frac 1n - \frac 1q)}\dm \vp(t)\dm_q \leq \dm g\dm_{\texttt{w},p}^{1-\gamma}\, K_g(t,\rho)^{\gamma}$$
which is the estimate \eqref{piccLq}.\par
In order to obtain estimate \eqref{piccgradLq}, it is enough to observe that, for all $t>s\geq 0$, we have
$$ \dm \n \vp(t) \dm_q\leq c\, (t-s)^{-\frac 12 } \dm \vp(s)\dm_q\,.$$
Again, by estimate \eqref{RP}, choosing $s=\frac t2$ and arguing as before, we complete the proof.
\end{proof}

\subsection{The case \texorpdfstring{$f=\n \cdot (a \otimes b)$}{f = n · (a ⊗ b)}}
Let $a$ and $b$ be divergence free vector fields such that\footnote{This restriction on $q$ is due to our  purposes. In general, the theory developed in \cite{KS} and \cite{Solonn} is valid for all $q>1$} $a \otimes b \in L^q((0,T)\times \R^n_+)$, for $q>n$, and assume that  $a\otimes b_{|_{x_n=0}}=0$ .\par In the following, we consider the system \eqref{Stokesf} for $f$ in the form $f= \n \cdot (a \otimes b)$, and $v_0(x)\equiv 0$, i.e.
\begin{equation}\label{Stokesab}
\begin{array}{ll}
\bar{u}_t - \Delta \bar{u}=+  \n \cdot (a \otimes b )- \nabla \pi\,,\;&\text{in}\,\, (0,\infty) \times \mathbb{R}^n_{+}\,, \\
\nabla \cdot \bar{u}=0\,, &\text{in}\,\, (0,\infty) \times \mathbb{R}^n_{+}, \\
\bar{u}_{| x_n=0}\,=0 \,,\\
\bar{u}(0,x)=0\, , & \text{in } \lbrace 0 \rbrace \times \R^n_+.
\end{array}
\end{equation}
In \cite{solo}, Solonnikov proved the existence and uniqueness of a solution to such problem, say $\ov{u}(t,x)$, and furnished the representation formula 
\be \label{ra}
\bar{u}(x,t) = \int_0^t \int_{\R^n_+} \mathcal{G}(t-\tau, x, y) Pf(\tau, y)\, dy d\tau ,
\ee
where, we recall that $Pf$ is the projection of $f$. As recalled above, for $f$ as in our assumptions the formula \eqref{rapfab} becomes
\be\label{rapfabmod}
\begin{split}
&\bar{u}(x,t) =\int_0^t \int_{\mathbb{R}^n_+}
G_{ij}(t-\tau,x,y)[P\n \cdot(a \otimes b)]_j(y,\tau)\,dy\,d\tau\\
&= -\int_0^t \int_{\mathbb{R}^n_+}
D_{y_p}G_{ij}(t-\tau,x,y)a_pb_j(y,\tau)\,dy\,d\tau  +
\int_0^t \int_{\mathbb{R}^n_+}
D_{y_j}G_{ij}(t-\tau,x,y)a_nb_n(y,\tau)\,dy\,d\tau \\
& +
\hat{\delta}_{rm}
\int_0^t \int_{\mathbb{R}^n_+}
D_{y_r}G_{in}(t-\tau,x,y)
(a_mb_n+a_nb_m)(y,\tau)\,dy\,d\tau +
\int_0^t \int_{\mathbb{R}^n_+}
G_{ij}(t-\tau,x,y)h_j(y,\tau)\,dy\,d\tau\,,
\end{split}
\ee
where $h_j$ is defined as linear combination of terms in the form \eqref{hj}.
\begin{lemma}\label{stimaq}
\sl{Let consider $\bar{u}(t,x)$ as in \eqref{rapfabmod} and assume that
$$ \sup_{(0,T)} t^{\frac{n}{2} \left( \frac{1}{n}-\frac{1}{q}\right)} \dm a(t) \dm_{L^q(\R^n_+)} + \sup_{(0,T)} t^{\frac{n}{2} \left( \frac{1}{n}-\frac{1}{q}\right)} \dm b(t) \dm_{L^q(\R^n_+)}< \infty.$$
Then, there exists a constant $c>0$ such that
\begin{equation}\label{stimaqformula}
t^{\frac{n}{2} \left( \frac{1}{n}-\frac{1}{q}\right)} \dm \bar{u}(t) \dm_{L^q(\R^n_+)} \leq c \sup_{(0,T)} t^{\frac{n}{2} \left( \frac{1}{n}-\frac{1}{q}\right)} \dm a(t) \dm_{L^q(\R^n_+)} \dm b(t) \dm_{L^q(\R^n_+)},    
\end{equation}
for all $t \in (0,T).$}
\end{lemma}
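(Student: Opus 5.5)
We will estimate each of the four terms in the representation \eqref{rapfabmod} in $L^q(\R^n_+)$, using Minkowski's integral inequality in time and Young's inequality for integral operators in space. The kernel bounds come from Lemma\,\ref{lemmaStimeyoungKernell} (for $D_yG$) and Lemma\,\ref{stimalrKlemma} (for the second derivatives of $K$ arising from the $h_j$ terms). The key exponent count is the following. The source $a\otimes b$ lies in $L^{q/2}$, since $\dm a\otimes b(\tau)\dm_{q/2}\le \dm a(\tau)\dm_q\dm b(\tau)\dm_q$, and we want the output in $L^q$. Young's inequality therefore requires the kernel in $L^r$ with $1+\frac1q=\frac1r+\frac2q$, i.e. $\frac1r=1-\frac1q$; note $r\ge1$.

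\textbf{First three terms.} Each involves $D_yG(t-\tau,x,y)$. Since \eqref{StimeyoungKernellx} and \eqref{StimeyoungKernelly} bound the $L^r$ norm of the kernel uniformly in both variables, the Schur/Young test gives
\[
\dm \textstyle\int_{\R^n_+} D_yG(t-\tau,\cdot,y)F(y)\,dy\dm_q\le c\,(t-\tau)^{-\frac{n+1}{2}+\frac{n}{2r}}\dm F\dm_{q/2}=c\,(t-\tau)^{-\frac12-\frac{n}{2q}}\dm F\dm_{q/2}.
\]
Since $q>n$, the exponent $\frac12+\frac n{2q}$ is less than $1$, so the singularity is integrable.

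\textbf{Fourth term.} For the $h_j$ term we use the form \eqref{lincomK}, i.e. the kernel $D_{x_sx_m}K_{ijq}$ with $m,s\ne n$. The expected obstacle is that Lemma\,\ref{stimalrKlemma} with $|h|=2$ yields $s^{-\frac{n+1}{2}+\frac n{2r}}$, which is the same scaling as above. We can see this directly from \eqref{SKKS} with $|h'|=2$, which gives the bound $(|x-z|^2+s)^{-\frac{n+1}{2}}$; computing its $L^r$ norm as in Lemma\,\ref{lemmaStimeyoungKernell} gives the same power. So this term is treated identically. The one point to double-check is that the boundary condition $a\otimes b|_{x_n=0}=0$ legitimizes the integrations by parts behind \eqref{rapfabmod} and \eqref{lincomK}; the paper has already set this up.

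\textbf{Time integral.} Writing $M:=\sup_{(0,T)}\tau^{\frac n2(\frac1n-\frac1q)}\dm a\dm_q\dm b\dm_q$ (interpreting the right-hand side of \eqref{stimaqformula} as a product of weighted sup-norms, i.e. with the weight squared, which is what the scaling forces), we have $\dm a\otimes b(\tau)\dm_{q/2}\le M\tau^{-(1-\frac nq)}$. Then
\[
\dm\bar u(t)\dm_q\le cM\int_0^t (t-\tau)^{-\frac12-\frac n{2q}}\tau^{-1+\frac nq}\,d\tau
= cM\,B\!\left(\tfrac12-\tfrac n{2q},\,\tfrac nq\right)t^{-\frac12+\frac n{2q}}.
\]
Both Beta exponents are positive since $n<q$. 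Because $t^{-\frac12+\frac n{2q}}=t^{-\frac n2(\frac1n-\frac1q)}$, multiplying by $t^{\frac n2(\frac1n-\frac1q)}$ gives \eqref{stimaqformula}.

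The main difficulty is justifying the uniform $L^r$ bounds on the $h_j$-kernel, i.e. that the second tangential derivatives of $K$ do not generate the worse normal-derivative factors $(x_n^2+s)^{-h_n/2}$ present in \eqref{SKKCM}. This is guaranteed precisely because $m,s\neq n$.
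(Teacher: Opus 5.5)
Your proposal is correct and follows the paper's argument: Young's inequality with $r=q'$ in the kernel bound \eqref{StimeyoungKernellx} gives the factor $(t-\tau)^{-\frac12-\frac{n}{2q}}$, and the Beta-type time integral against $\tau^{-1+\frac nq}$ finishes the estimate (the paper writes out only the first term of \eqref{rapfabmod} and calls the rest analogous, whereas you also check the $h_j$ term via \eqref{SKKS} with $m,s\neq n$). Your displayed definition of $M$ should carry the squared weight $\tau^{n(\frac1n-\frac1q)}=\tau^{1-\frac nq}$, as your parenthetical intends, and that is exactly the quantity in the last line of the paper's proof.
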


\begin{proof}
We limit ourselves to estimate the first term on the right-hand side of \eqref{rapfabmod}. The others follow in the same way.\par
Taking $r=q'$ in estimate \eqref{StimeyoungKernellx}, with $|h|=1$ and $|l|=k=0$  and applying Young's inequality, we obtain
\begin{equation*}\label{a}
\begin{split}
 \dm \ov u (t)\dm _q  
&\leq c  \int_0^t (t-\tau)^{-\frac{1}{2}-\frac{n}{2q}}\dm |a(\tau)||b(\tau)|\dm _{\frac{q
}{2}}\, d\tau \leq  \sup_{(0,t)}\tau^{n(\frac{1}{n}-\frac{1}{q})}\dm |a(\tau)||b(\tau)|\dm _{\frac{q
}{2}}\int_0^t \tau^{-n(\frac{1}{n}+\frac{1}{q})}(t-\tau)^{-\frac{1}{2}-\frac{n}{2q}}\, d\tau\\
&\leq c\, t^{-\frac{n}{2}(\frac{1}{n}-\frac{1}{q})}\sup_{(0,t)}\tau^{n(\frac{1}{n}-\frac{1}{q})}\dm a(\tau)\dm _q\dm b(\tau)\dm _q.
\end{split}
\end{equation*}
which implies the thesis.
\end{proof}
\begin{lemma}\label{LEmmaPNLGRADLq}
\sl{Let consider \eqref{rapfabmod} and assume that
$$ \sup_{(0,T)} \tau^{\frac{n}{2} \left( \frac{1}{n}-\frac{1}{q}\right)} \dm a(\tau) \dm_{q} + \sup_{(0,T)} \tau^{\frac{n}{2} \left( \frac{1}{n}-\frac{1}{q}\right)} \dm b(\tau) \dm_{q}  + \sup_{(0,T)} \tau^{\frac{n}{2} \left( \frac{1}{n}-\frac{1}{q}\right) +\frac{1}{2}} \dm \n b (\tau) \dm_{q}< \infty.$$
Then, there exists a constant $c>0$ such that
\begin{equation}\label{stimaqformulagrad}
\begin{split}
t^{\frac{n}{2} \left( \frac{1}{n}-\frac{1}{q} \right) +\frac{1}{2}} \dm \n \bar{u}(t) \dm_{L^q(\R^n_+)} \leq & \; c \sup_{(0,T)} \tau^{\frac{n}{2} \left( \frac{1}{n}-\frac{1}{q}\right)} \dm a(\tau) \dm_{q}\,\sup_{(0,T)} \tau^{\frac{n}{2} \left( \frac{1}{n}-\frac{1}{q}\right)} \dm b(\tau) \dm_{q} \\
& + c \sup_{(0,T)} \tau^{\frac{n}{2} \left( \frac{1}{n}-\frac{1}{q}\right)} \dm a(\tau) \dm_{q}\, \sup_{(0,T)} \tau^{\frac 12 +  \frac{n}{2} \left( \frac{1}{n}-\frac{1}{q}\right)} \dm \n b(\tau) \dm_{q},
\end{split}
\end{equation}
for all $t \in (0,T).$}
\end{lemma}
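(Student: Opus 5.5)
The difficulty is that differentiating \eqref{rapfabmod} in $x$ places two derivatives on the kernel. By Lemma~\ref{lemmaStimeyoungKernell} (and Lemma~\ref{stimalrKlemma} for the $h_j$-terms) the resulting singularity is $(t-\tau)^{-1-\frac{n}{2q}}$ after Young's inequality, which is not integrable at $\tau=t$. I would therefore split the time integral into $(0,\frac t2)$ and $(\frac t2,t)$, writing $\bar u=\bar u_1+\bar u_2$, and treat the two pieces with different representations.

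\emph{Piece on $(0,\frac t2)$.} Here I keep the divergence form \eqref{rapfabmod} and differentiate under the integral. For the first three terms the kernel is $D_xD_yG$. Taking $|h|=|l|=1$ in \eqref{StimeyoungKernellx} with $r=q'$, Young's inequality (from $L^{q/2}$ to $L^q$) gives
$$\|\nabla\bar u_1(t)\|_q\le c\int_0^{t/2}(t-\tau)^{-1-\frac n{2q}}\|a(\tau)\|_q\|b(\tau)\|_q\,d\tau .$$
For the terms containing $h_j$, I use \eqref{lincomK} and Lemma~\ref{stimalrKlemma} with $D_x$ applied to $D_{x_sx_m}K$, so $|h|=3$ and $|l'|=0$. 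This gives the same power $(t-\tau)^{-\frac{n+2}2+\frac n{2r}}$ with $\frac1r=1-\frac1q$. Since $t-\tau\ge \frac t2$, the weight satisfies $(t-\tau)^{-1-\frac n{2q}}\le ct^{-1-\frac n{2q}}$. Inserting $\|a\|_q\|b\|_q\le \tau^{-(1-\frac nq)}\sup\cdots\sup\cdots$, we have $\int_0^{t/2}\tau^{-1+\frac nq}d\tau=ct^{\frac nq}$, which is finite because $q>n$. The total is $ct^{-1+\frac n{2q}}=ct^{-\frac n2(\frac1n-\frac1q)-\frac12}$. This yields the first term on the right of \eqref{stimaqformulagrad}.

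\emph{Piece on $(\frac t2,t)$.} Here I avoid putting a derivative on $b$ through the kernel. Since $\nabla\cdot a=0$, we have $\nabla\cdot(a\otimes b)=a\cdot\nabla b$, so I use \eqref{ra} directly:
$$\bar u_2=\int_{t/2}^t\int_{\R^n_+}\mathcal G(t-\tau,x,y)\,P(a\cdot\nabla b)(\tau,y)\,dy\,d\tau .$$
Now only one $x$-derivative falls on $\mathcal G$. Since $q/2>n/2>1$, the projection $P$ is bounded on $L^{q/2}(\R^n_+)$ by the (unweighted) Helmholtz decomposition. Estimate \eqref{StimeyoungKernellx} with $|h|=1$ then gives
$$\|\nabla\bar u_2(t)\|_q\le c\int_{t/2}^t(t-\tau)^{-\frac12-\frac n{2q}}\|a(\tau)\|_q\|\nabla b(\tau)\|_q\,d\tau .$$
The exponent $\frac12+\frac n{2q}<1$, so the singularity is integrable. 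For $\tau\ge\frac t2$ we have $\|a\|_q\|\nabla b\|_q\le ct^{-\frac32+\frac nq}\sup\cdots\sup\cdots$, and integrating produces $t^{\frac12-\frac n{2q}}$. The product is again $t^{-1+\frac n{2q}}$, which gives the second term of \eqref{stimaqformulagrad}.

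The main obstacle is justifying this second representation rigorously. Two points need checking: that the two representations of $\bar u$ (divergence form versus $Pf$ form) coincide, which follows from the integration by parts leading to \eqref{rapfab} given $a\otimes b=0$ on $x_n=0$; and that $a\cdot\nabla b\in L^{q/2}$ is admissible data for Solonnikov's formula. A second point is that the $h_j$-terms need the full $D_x$ bound from \eqref{SKKCM} including $x_n$-derivatives. If the $|h_n|\ge1$ part of \eqref{stimeLrKx} is delicate, I would move $x'$-derivatives to $z'$ via the tangential symmetry, as is done for $G$.
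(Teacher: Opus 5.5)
Your argument is correct. Its skeleton is the paper's: split the time integral at $t/2$, put two derivatives on the kernel for $\tau<t/2$, and for $\tau>t/2$ put one derivative on the kernel and one on $b$. The difference lies in how you treat the late-time piece, and your version is sounder than the paper's.

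\textbf{What the paper does.} It splits only the first term of \eqref{rapfabmodgrad} and integrates by parts back term by term. It estimates $\nabla_x T$ (the $h_j$-terms) on the whole interval $(0,t)$ with the $|h|=3$ bound \eqref{stimeLrKx}.

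\textbf{What you do.} You write the entire late-time contribution through \eqref{ra}, as $\mathcal G$ acting on $P(a\cdot\nabla b)$. You then use only the boundedness of $P$ on $L^{q/2}(\R^n_+)$, which holds since $q/2>1$.

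\textbf{What this buys you.}
\begin{itemize}
\item The paper's estimate of $\nabla_x T$ leads to $\int_0^t\tau^{-1+\frac nq}(t-\tau)^{-1-\frac n{2q}}\,d\tau$, which diverges at $\tau=t$. So the $h_j$-terms also need the $t/2$-splitting, and your global use of \eqref{ra} supplies it without any new kernel estimate.
\item Integrating back term by term produces $D_{y_j}(a_nb_n)$ and $D_{y_r}(a_mb_n+a_nb_m)$. These contain $\nabla a$, which is not among the hypotheses; this is harmless in the application, where $a=b$.
\item Your identity $\nabla\cdot(a\otimes b)=a\cdot\nabla b$ inside $P$ uses only $\nabla b$, so you prove the lemma exactly as stated.
\item Your late-time singularity $(t-\tau)^{-\frac12-\frac n{2q}}$ is the correct one coming from $|h|=1$, $r=q'$. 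The exponent $1+\frac n{2q}$ printed in \eqref{seconda} is a misprint, and with it the integral would diverge.
\end{itemize}

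\textbf{Your flagged concerns.} The consistency checks you mention are routine. The integration by parts leading to the divergence form is a spatial identity at each fixed $\tau$, so the two representations agree on every time subinterval. Your worry about $x_n$-derivatives in \eqref{stimeLrKx} on $(0,t/2)$ is unnecessary. The $h_j$-terms always carry the two tangential derivatives $D_{x_sx_m}$, so $|h'|\ge 2$, and the bound \eqref{SKKCM} is $L^{q'}$-integrable in $z$. On that interval $t-\tau\ge t/2$, so there is no time singularity either.
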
  
\begin{proof}
    First, as done at beginning of this section, integrating twice and taking in account that $D_{x'}G(s,x,y)=-D_{y'}G(s,x,y)$,  we can rewrite the last term in \eqref{rapfabmod} as a linear combination of terms of the form \eqref{lincomK} with $F_{pr}=a_pb_r$. Denoting by $T(t,x)$ the linear combination thus obtained, we can rewrite \eqref{rapfabmod} as follows
\be \label{rapfabmodmod}
\begin{split}
\bar{u}(x,t) 
&= -\int_0^t \int_{\mathbb{R}^n_+}
D_{y_p}G_{ij}(t-\tau,x,y)a_pb_j(y,\tau)\,dy\,d\tau \,+
\int_0^t \int_{\mathbb{R}^n_+}
D_{y_j}G_{ij}(t-\tau,x,y)a_nb_n(y,\tau)\,dy\,d\tau \\
&  \; \; \; \;+
\hat{\delta}_{rm}
\int_0^t \int_{\mathbb{R}^n_+}
D_{y_r}G_{in}(t-\tau,x,y)
(a_mb_n+a_nb_m)(y,\tau)\,dy\,d\tau +
T(t,x)\,. 
\end{split}
\ee
Differentiating the last formula, we obtain
\be\label{rapfabmodgrad}
\begin{split}
\n_x\ov{u}(t,x)&=
-\int_0^t \!\!\int_{\mathbb{R}^n_+}\!\!\!\!\!
\n_xG_{ij}(t-\tau,x,y)D_{y_p}a_pb_j(y,\tau)\,dy\,d\tau+
\int_0^t \!\!\int_{\mathbb{R}^n_+}
\!\!\!\!\!\n_xG_{ij}(t-\tau,x,y)D_{y_j}a_nb_n(y,\tau)\,dy\,d\tau \\
&+
\hat{\delta}_{rm}
\int_0^t \!\! \int_{\mathbb{R}^n_+}
\!\!\n_xG_{in}(t-\tau,x,y)
D_{y_r}(a_mb_n+a_nb_m)(y,\tau)\,dy\,d\tau +
\n_x T(t,x)\,.
\end{split}
\ee
 It suffices to estimate the first and the last terms, as the remaining ones follow by similar arguments. \par 
Integrating by parts, we can rewrite the first integral as follows
\begin{align*}
   -\int_0^t \int_{\mathbb{R}^n_+}
\n_xG_{ij}(t-\tau,x,y)D_{y_p}a_pb_j&(y,\tau)\,dy\,d\tau= \int_0^{\frac t2} \int_{\mathbb{R}^n_+}
\n_x D_{y_p}G_{ij}(t-\tau,x,y)a_pb_j(y,\tau)\,dy\,d\tau\\
&
-\int_{\frac t2}^t \int_{\mathbb{R}^n_+}
\n_xG_{ij}(t-\tau,x,y)a_pD_{y_p}b_j(y,\tau)\,dy\,d\tau=: I_1+I_2.
\end{align*}
Considering the $L^q$-norm of $I_1$, taking $r=q'$ in estimate \eqref{StimeyoungKernellx}, with $|h|=|l|=1$ and $k=0$, applying the Young inequality and using hypothesis on $a$ and $b$, we obtain
\begin{equation}\label{prima}
\begin{split}
\vspace{5pt}
    \dm I_1\dm_q \leq  c\!\!\int_0^{\frac t2} \frac{\dm a(\tau) \dm_q \dm b(\tau)\dm_q}{(t-\tau)^{1+\frac{n}{2q}}}\,d\tau &\leq c \sup_{(0,T)} \tau^{\frac{n}{2} \left( \frac{1}{n}-\frac{1}{q}\right)} \dm a(\tau) \dm_{q} \,\sup_{(0,T)} \tau^{\frac{n}{2} \left( \frac{1}{n}-\frac{1}{q}\right)} \dm b(\tau) \dm_{q} \!\int_0^{\frac t2}\!\! \frac{1}{\tau^{1-\frac{n}{q}}(t-\tau)^{1+\frac{n}{2q}}}\, d\tau \\
    \vspace{5pt}
    &\leq c\, t^{-\frac 12 -\frac n2(\frac 1n -\frac 1q)}\sup_{(0,T)} \tau^{\frac{n}{2} \left( \frac{1}{n}-\frac{1}{q}\right)} \dm a(\tau) \dm_{q} \,\sup_{(0,T)} \tau^{\frac{n}{2} \left( \frac{1}{n}-\frac{1}{q}\right)} \dm b(\tau) \dm_{q}\,.    
\end{split}
\ee
Similarly, for $I_2$ thanks to estimate \eqref{StimeyoungKernellx}, with $|h|=1$ and $|l|=k=0$, we obtain
\begin{equation}\label{seconda}
\begin{split}
    \dm I_2\dm_q &\leq c \int_{\frac t2}^t
\frac{\dm a(\tau)\dm_q\,\dm \n b(\tau)\dm_q}{(t-\tau)^{1+\frac{n}{2q}}}\,d\tau\\
& \leq  c \sup_{(0,T)} \tau^{\frac{n}{2} \left( \frac{1}{n}-\frac{1}{q}\right)} \dm a(\tau) \dm_{q} \,\sup_{(0,T)} \tau^{\frac 12+\frac{n}{2} \left( \frac{1}{n}-\frac{1}{q}\right)} \dm \n b(\tau) \dm_{q} \int_{\frac t2}^t \frac{1}{\tau^{\frac{3}{2}-\frac{n}{q}}(t-\tau)^{1+\frac{n}{2q}}}\, d\tau \\
    &\leq c\, t^{-\frac 12 -\frac n2(\frac 1n -\frac 1q)}\sup_{(0,T)} \tau^{\frac{n}{2} \left( \frac{1}{n}-\frac{1}{q}\right)} \dm a(\tau) \dm_{q} \,\sup_{(0,T)} \tau^{\frac 12 + \frac{n}{2} \left( \frac{1}{n}-\frac{1}{q}\right)} \dm \n b(\tau) \dm_{q}\,.
    \end{split}
\ee
Concerning the last term in \eqref{rapfabmodgrad}, considering, by estimate \eqref{stimeLrKx} with $|h|=3$ and $|l|=0$, for a general term of the linear combination, we obtain
\small{\begin{align*}
    &\left | \!\left| \int_0^t  \!\!\!\int_{R^n_+}\!\!\!\!\!\! D_{x_n x_s x_m} K_{ijq} (t\! -\!\tau,x,z) a_p(\tau,z)b_r(\tau,z) \, dz 
   \right | \!\right|_q \\
   &\leq c \sup_{(0,T)} \tau^{\frac{n}{2} \left( \frac{1}{n}-\frac{1}{q}\right)} \dm a(\tau) \dm_{q} \,\sup_{(0,T)} \tau^{\frac{n}{2} \left( \frac{1}{n}-\frac{1}{q}\right)} \dm b(\tau) \dm_{q} \int_0^{t} \ssfrac{1}{\tau^{1-\frac{n}{q}}(t-\tau)^{1+\frac{n}{2q}}}\, d\tau \\
   &\leq c\, t^{-\frac 12 -\frac n2(\frac 1n -\frac 1q)}\sup_{(0,T)} \tau^{\frac{n}{2} \left( \frac{1}{n}-\frac{1}{q}\right)} \dm a(\tau) \dm_{q} \,\sup_{(0,T)} \tau^{\frac{n}{2} \left( \frac{1}{n}-\frac{1}{q}\right)} \dm b(\tau) \dm_{q}\,. 
\end{align*}}
Finally, summing the relations just obtained and estimate \eqref{prima}, \eqref{seconda}, we deduce the thesis.
\end{proof}
\begin{lemma}\label{manca}
    \sl{
    Assume that
\[\sup_{(0,t)}\tau^{\frac{1}{2}}\dm a(\tau)\dm_{\infty} +  \sup_{(0,T)} \tau^{\frac{n}{2} \left( \frac{1}{n}-\frac{1}{q}\right)} \dm b(\tau) \dm_{L^q(\R^n_+)}\,.\]
Then, there exists a $c$ independent of $a(t,x)$ and $b(t,x)$ such that
\be\label{Stimelinfpnl}
t^{\frac{1}{2}}\dm \ov{u}(t)\dm_{\infty}\leq c \sup_{(0,t)}\tau^{\frac{1}{2}}\dm a(\tau)\dm_{\infty}\,\sup_{(0,T)} \tau^{\frac{n}{2} \left( \frac{1}{n}-\frac{1}{q}\right)} \dm b(\tau) \dm_{L^q(\R^n_+)}\,,
\ee
for all $t\in (0,T)$.
    }
\end{lemma}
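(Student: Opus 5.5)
The plan is to start from the representation \eqref{rapfabmodmod}. There $\ov u$ is a sum of three terms of the type $\int_0^t\int_{\R^n_+} D_yG(t-\tau,x,y)\,(a\otimes b)(\tau,y)\,dy\,d\tau$ and of the term $T(t,x)$, a linear combination of the integrals \eqref{lincomK} with $F_{pr}=a_pb_r$. I will estimate each piece in $L^\infty$ separately. In every case Hölder's inequality in $y$ puts $a$ in $L^\infty$ and $b$ in $L^q$, and the kernel goes in $L^{q'}$.

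For the first three terms I apply \eqref{StimeyoungKernelly}, or more precisely its $x$-uniform version \eqref{StimeyoungKernellx}, with $|l|=1$, $|h|=k=0$ and $r=q'$. This gives
$$\sup_x\dm D_yG(t-\tau,x,\cdot)\dm_{q'}\leq c\,(t-\tau)^{-\frac12-\frac n{2q}}.$$
Hence
$$|\ov u_1(t,x)|\leq c\int_0^t (t-\tau)^{-\frac12-\frac n{2q}}\dm a(\tau)\dm_\infty\dm b(\tau)\dm_q\,d\tau .$$
I then insert $\dm a(\tau)\dm_\infty\leq \tau^{-\frac12}\sup\tau^{\frac12}\dm a\dm_\infty$ and $\dm b(\tau)\dm_q\leq \tau^{-\frac12+\frac n{2q}}\sup \tau^{\frac n2(\frac1n-\frac1q)}\dm b\dm_q$. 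What remains is the Beta integral
$$\int_0^t \tau^{-1+\frac n{2q}}(t-\tau)^{-\frac12-\frac n{2q}}\,d\tau=c\,t^{-\frac12}.$$
Both exponents are integrable: $-1+\frac n{2q}>-1$, and $\frac12+\frac n{2q}<1$ because $q>n$. Multiplying by $t^{\frac12}$ gives \eqref{Stimelinfpnl} for these terms.

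For $T(t,x)$ I must be careful. The representation \eqref{lincomK} carries two tangential $x$-derivatives $D_{x_sx_m}$ on $K$, so the kernel $D_{x_sx_m}K(t-\tau,x,z)$ has, by \eqref{stimeLrKx} with $|h|=2$ and $l=0$, $L^{q'}$-norm of order
$$(t-\tau)^{-\frac{n+1}{2}+\frac n{2q'}}=(t-\tau)^{-\frac12-\frac n{2q}}.$$
This is exactly the same singularity as before, so the same Beta-function computation closes the estimate for $T$ as well. If one prefers to avoid relying on the $L^r$ bound for two derivatives, the pointwise bound \eqref{SKKS} with $|h'|=2$ gives $|D^2_{x'}K|\leq c(|x-z|^2+s)^{-\frac{n+1}2}$. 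The $L^{q'}$ norm of this expression is directly computable as in Lemma \ref{lemmaStimeyoungKernell} and gives the same power.

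The main obstacle is precisely this kernel-integrability check for the $K$-terms. The point to verify is that no normal derivative falls on $K$, so that the worse terms in \eqref{SKKCM} containing $(x_n^2+s)^{-h_n/2}$ do not appear. Because $m,s\neq n$, only tangential derivatives occur, and \eqref{SKKS} applies. Summing the four contributions yields \eqref{Stimelinfpnl} with $c$ independent of $a$ and $b$. The only requirement is $q>n$, which makes the time integral converge at $\tau=t$.
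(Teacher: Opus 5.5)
Your proof is correct and essentially matches the paper's intended argument. The paper omits this proof, calling it a trivial adaptation of Lemma \ref{PPNLF}, which applies the pointwise kernel bounds \eqref{SKIC} and \eqref{SKKS} to \eqref{rapfabmod}. Replacing the weighted pointwise bound on $b$ by H\"older's inequality with the $L^{q'}$-norms of those same kernels, and then using the Beta integral that converges precisely because $q>n$, is exactly that adaptation (it also mirrors the proof of Lemma \ref{stimaq}).
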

\begin{proof}
    For the sake of brevity, we omit the proof, as it is analogous to that of the following lemma, with a trivial adaptation.
\end{proof}
\begin{lemma}\label{PPNLF}
{\sl
Assume that
\[\sup_{(0,t)}\tau^{\frac{1}{2}}\dm a(\tau)\dm_{\infty}+\sup_{(0,t)}\tau^{\frac{n}{2p}}\sup_{\R^n_+}|y|^{\alpha}|b(\tau,y)|< \infty.\]
Then, there exists a constant $c$, independent of $a$ and $b$, such that
\begin{equation}\label{sppnl}
\begin{array}{cc}
     &|x|^{\alpha}\,t^{\frac{n}{2p}}|\ov{u}(t,x)|\leq c\, \dy \sup_{(0,t)}\tau^{\frac{1}{2}}\dm a(\tau)\dm_{\infty}\, \dy\sup_{(0,t)}\tau^{\frac{n}{2p}} \dy \sup_{\R^n_+}|y|^{\alpha}|b(\tau,y)|\,,  \\
     & |x|^{2\alpha}\,t^{\frac 12 -\frac{n}{p}}|\ov{u}(t,x)|\leq c\,\big[\dy\sup_{(0,t)}\tau^{\frac{n}{2p}} \dy\sup_{\R^n_+}|y|^{\alpha}|b(\tau,y)|\big]^2\,.
\end{array}
\end{equation}
Moreover, assume that $a\equiv b$ and
\begin{equation*}
    \sup_{(0,t)}\tau^{\frac{n}{2p}}\sup_{\R^n_+}|y|^{\alpha}|b(\tau,y)|= o(1)\quad \mbox{in } t=0\,,
\end{equation*}
then one also gets
\begin{equation}\label{infinitesimopnl}
    |x|^{2\alpha}\,t^{\frac 12 -\frac{n}{p}}|\ov{u}(t,x)|=o(1) \quad \mbox{in } t=0\,, \mbox{ for all }x\in \R^n_+\,.
\end{equation}
}
\end{lemma}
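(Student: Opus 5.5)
We work directly from the representation \eqref{rapfabmod}, written in the form \eqref{rapfabmodmod}: $\bar u$ is the sum of three terms of the type $\int_0^t\int_{\R^n_+}D_yG(t-\tau,x,y)\,(a\otimes b)(\tau,y)\,dy\,d\tau$ and of $T(t,x)$, a linear combination of the terms \eqref{lincomK} with $F_{pr}=a_pb_r$. For each term we use the pointwise kernel bounds \eqref{SKIC} (with $|l|=1$) and \eqref{SKKCM} (with $|h|=2$, $k=|l'|=0$). After discarding the exponential factor, these give
$$|D_yG(s,x,y)|+|D^2_xK(s,x,y)|\le c\,(|x-y|^2+s)^{-\frac{n+1}{2}}+c\,s^{-\frac12}(|x-y^*|^2+s)^{-\frac n2}e^{-\hat a y_n^2/s}+\dots$$
Since $|x-y^*|\ge|x-y|$ for $x,y\in\R^n_+$, the reflected part is dominated by a kernel of the same homogeneity. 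In the third term of \eqref{SKKCM} the factor $[(x_n-z_n)^2+s]^{-h_n/2}$ only appears when normal derivatives are taken; it can be handled with the same splitting, integrating first in $y'$ and then in $y_n$. Every piece is therefore controlled by $\int_0^t\int |\mathcal K(t-\tau,x,y)|\,|a||b|\,dy\,d\tau$, where $\mathcal K$ is homogeneous of degree $-(n+1)$ in the parabolic scaling.

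\emph{First estimate.} Insert $|a(\tau,y)|\le \tau^{-1/2}A$ and $|b(\tau,y)|\le \tau^{-\frac n{2p}}|y|^{-\alpha}B$, where $A,B$ are the two suprema. The key step is a weighted Riesz-type bound:
$$\int_{\R^n_+}\frac{|y|^{-\alpha}}{(|x-y|^2+s)^{\frac{n+1}{2}}}\,dy\le c\,s^{-\frac12}\min\{s^{-\alpha/2},|x|^{-\alpha}\}.$$
To prove it, split $\R^n_+$ into $|y|<|x|/2$, $|x|/2\le |y|\le 2|x|$ and $|y|>2|x|$, and use $\alpha<1$ together with $0\le\alpha<1<n$. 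Keeping the factor $|x|^{-\alpha}$, one is left with
$$|x|^\alpha|\bar u|\le cAB\int_0^t\tau^{-\frac12-\frac n{2p}}(t-\tau)^{-\frac12}\,d\tau=cABt^{-\frac n{2p}}\,B(\ldots),$$
which is finite because $\frac12+\frac n{2p}<1$. This is exactly the first inequality in \eqref{sppnl}.

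\emph{Second estimate.} Both factors now carry the weight, giving $\tau^{-\frac np}|y|^{-2\alpha}B^2$. The analogous bound is
$$\int\frac{|y|^{-2\alpha}}{(|x-y|^2+s)^{\frac{n+1}2}}\,dy\le c\,s^{-\frac12}|x|^{-2\alpha}\quad\text{when }2\alpha<n.$$
The time integral is then $\int_0^t\tau^{-\frac np}(t-\tau)^{-\frac12}d\tau=ct^{\frac12-\frac np}$, which is convergent because $n/p<1$. It yields $|x|^{2\alpha}|\bar u|\le cB^2t^{\frac12-\frac np}$. The exponent written in \eqref{sppnl} is $t^{\frac12-\frac np}$ on the left-hand side. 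I would reconcile this with the above by checking the scaling: $2\alpha+2\cdot\frac n{2p}\cdot$(weights) gives homogeneity $-(1-2\alpha)$ overall, so the correct power is consistent with the region where $|x|\lesssim\sqrt t$. If the direct computation gives $t^{\frac np-\frac12}$ on the right, one uses $\min$ with $s^{-\alpha}$ on the region $|x|^2<t$ to get the stated form. This bookkeeping is where I expect the main obstacle, together with showing that the $T$-term (nonlocal kernel $K$, with its third-type singularity) obeys the same weighted bound.

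\emph{Smallness statement.} For $a\equiv b$, repeat the second estimate but keep $B(t):=\sup_{(0,t)}\tau^{\frac n{2p}}\sup_y|y|^\alpha|b|$ inside: the bound becomes $c\,B(t)^2$ times the same constant. Since $B(t)=o(1)$ as $t\to0$, this gives \eqref{infinitesimopnl} for every fixed $x$.
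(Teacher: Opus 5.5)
Your skeleton is the paper's: pointwise kernel bounds, the insertion of $|a|\le A\tau^{-1/2}$ and $|b|\le B\tau^{-\frac n{2p}}|y|^{-\alpha}$, the splitting $|y|\lessgtr|x|/2$, and Beta integrals in time. For the interior kernel $(|x-y|^2+s)^{-\frac{n+1}2}$, your first estimate and your smallness argument are exactly what the paper does. There is one genuine gap and one misdiagnosed obstacle.

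\textbf{The reflected kernel needs its own estimate.} The boundary part of $D_{y_n}G$ is $s^{-1/2}(|x-y^*|^2+s)^{-n/2}e^{-\hat a y_n^2/s}$. It is \emph{not} bounded by $c(|x-y|^2+s)^{-\frac{n+1}2}$: for $x_n,y_n\lesssim\sqrt s$ and $|x'-y'|=R\gg\sqrt s$, the ratio of the two is of order $R/\sqrt s$. So your weighted Riesz bound does not cover it. Equal parabolic homogeneity does not transfer that bound either. The kernel $s^{-1/2}(|x-y|^2+s)^{-n/2}$ has the same homogeneity, yet its integral against $|y|^{-\alpha}$ over $\{|x-y|<|x|/2\}\cap\R^n_+$ is of order $s^{-1/2}|x|^{-\alpha}\log(|x|^2/s)$, which destroys the first estimate for $|x|^2\gg t$.

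The Gaussian factor must therefore be kept and used, as in the paper's treatment of $J_2$. On $|y|<|x|/2$ one has $|x-y^*|\ge|x|/2$, which gives $c\,s^{-1/2}|x|^{-n}\int_{|y|<|x|/2}|y|^{-\alpha}dy\le c\,s^{-1/2}|x|^{-\alpha}$. On $|y|>|x|/2$ one bounds $|y|^{-\alpha}\le c|x|^{-\alpha}$ and integrates first in $y'$, obtaining at most $c\,s^{-1/2}((x_n+y_n)^2+s)^{-1/2}\le c\,s^{-1}$. Then one integrates in $y_n$, where $\int_0^\infty e^{-\hat a y_n^2/s}dy_n=c\,s^{1/2}$.

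By contrast, the $T$-term is no obstacle. In \eqref{lincomK} only tangential derivatives $D_{x_sx_m}$ with $m,s\neq n$ occur, so $h_n=0$. Then \eqref{SKKS} with $|h'|=2$ gives exactly $c(|x-z|^2+s)^{-\frac{n+1}2}$, and this is what the paper uses.

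\textbf{The exponent in \eqref{sppnl}$_2$ is a misprint.} Your direct computation $|x|^{2\alpha}|\ov u(t,x)|\le c\,B^2\,t^{\frac12-\frac np}$ is correct. It requires $a$ to satisfy the same weighted bound as $b$, which the statement tacitly assumes. This is precisely what the paper proves: its computation for \eqref{infinitesimopnl} ends with $c\,\varepsilon^2|x|^{-2\alpha}t^{\frac12-\frac np}$, and Lemma~\ref{andamento_a_0_parte_non_lineare} uses the bound in this form.

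The factor $t^{\frac12-\frac np}$ on the left of \eqref{sppnl}$_2$ should read $t^{\frac np-\frac12}$. To see this, take $a_\lambda=\lambda a(\lambda^2\cdot,\lambda\cdot)$ and $b_\lambda=\lambda b(\lambda^2\cdot,\lambda\cdot)$. Then $\ov u_\lambda=\lambda\,\ov u(\lambda^2\cdot,\lambda\cdot)$, and $B$ is invariant because $\frac np+\alpha=1$. The literal left-hand side, however, acquires the factor $\lambda^{\frac{4n}p-2}$, so for $p\neq2n$ it cannot hold with a uniform constant.

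Drop the ``$\min$ with $s^{-\alpha}$'' reconciliation: it can only reproduce the scale-invariant form. With the corrected exponent, your smallness argument, $|x|^{2\alpha}t^{\frac np-\frac12}|\ov u(t,x)|\le c\,B(t)^2\to0$, coincides with the paper's.
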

\begin{proof}
Let us consider the representation \eqref{rapfabmod}. From pointwise estimates \eqref{SKIC} and \eqref{SKKS} on the integral kernels, we have
  \begin{align}
      |\bar{u}(x,t)| & \leq c\int_0^t \int_{\R^n_+}\frac{|a(\tau,y)| |b(\tau, y)|}{(|x-y|^2 + (t-\tau))^{\frac{n+1}{2}}} \, dy d\tau + c\int_0^t \int_{\R^n_+}\frac{|a(\tau,y)| |b(\tau, y)| e^{-\frac{\hat{a}y_n^2}{(t-\tau)}}}{(|x-y^*|^2 + (t-\tau))^{\frac{n}{2}}(t-\tau)^{\frac{1}{2}}} \, dy d\tau \nonumber\\
      & := J_1 + J_2, \label{d}
  \end{align} 
and we set
\[D:=\sup_{(0,t)}\tau^{\frac{1}{2}}\dm a(\tau)\dm_{\infty}\,\sup_{(0,t)}\tau^{\frac{n}{2p}}\sup_{\R^n_+}|y|^{\alpha}|b(\tau,y)|\,.\]
Concerning $J_1$, we have
\begin{align*}
 J_1 & \leq \!\int_0^t\!\int_{|y|<\frac{|x|}{2}}\!\!\sfrac{|a(\tau,y)||b(\tau,y)|}{(|x-y|^2+(t-\tau))^{\frac{n+1}{2}}}\,dy\,d\tau
+  \!\int_0^t\!\int_{|y|>\frac{|x|}{2}}\!\!\sfrac{|a(\tau,y)||b(\tau,y)|}{(|x-y|^2+(t-\tau))^{\frac{n+1}{2}}}\,dy\,d\tau\\&=:J^1_1+J^2_1\,.
\end{align*}
Since $|x-y|\geq |x| -|y|\geq \frac{|x|}{2}$ for $|y|<\frac{|x|}{2}$, then  for $J^1_1$ we get
\begin{align*}
J^1_1\leq {D}{|x|^{-n}}\int_0^t \tau^{-\frac{1}{2}-\frac{n}{2p}}(t-\tau)^{-\frac{1}{2}}\int_{|y|<\frac{|x|}{2}}{|y|^{-\alpha}}\,dy\,d\tau\leq {c\,D}{|x|^{-\alpha}\, t^{-\frac{n}{2p}}}.
\end{align*}
For $J^2_1$ we have
\begin{align*}
J^2_1 &\leq {c\, D}{|x|^{-\alpha}}\int_0^t {\tau^{-\frac{1}{2}-\frac{n}{2p}}}\bigg(\int_{|y|>\frac{|x|}{2}}{(|x-y|+(t-\tau)^{\frac{1}{2}})^{-n-1}}\,dy\bigg)\,d\tau\VSE\leq {c\, D}{|x|^{-\alpha}}\int_0^t {\tau^{-\frac{1}{2}-\frac{n}{2p}}(t-\tau)^{-\frac{1}{2}}}\,d\tau\leq \frac{c\, D}{|x|^{\alpha}\, t^{\frac{n}{2p}}}\,.
\end{align*}
So we arrive at
\begin{equation} \label{stimaJ_1}
J_1 \leq \frac{cD}{|x|^\alpha t^{\frac{n}{2p}}}.
\end{equation}
Analogously, for $J_2$ we get
\begin{align}
    J_2 & \leq cD \int_0^t \frac{1}{\tau^{\frac{1}{2} + \frac{n}{2p}}} \int_{\R^n_+} \frac{ e^{-\frac{\hat{a}y_n^2}{(t-\tau)}}}{(|x-y^*|^2 + (t-\tau))^{\frac{n}{2}}(t-\tau)^{\frac{1}{2}}} \frac{1}{|y|^\alpha} \, dyd\tau \nonumber\\
    &\leq cD \int_0^t \frac{1}{\tau^{\frac{1}{2} + \frac{n}{2p}}} \int_{|y|<\frac{|x|}{2}} \frac{ e^{-\frac{\hat{a}y_n^2}{(t-\tau)}}}{(|x-y^*|^2 + (t-\tau))^{\frac{n}{2}}(t-\tau)^{\frac{1}{2}}} \frac{1}{|y|^\alpha} dyd\tau \nonumber\\
    & \hspace{0.45cm} + cD \int_0^t \frac{1}{\tau^{\frac{1}{2} + \frac{n}{2p}}}\int_{|y|>\frac{|x|}{2}} \frac{ e^{-\frac{\hat{a}y_n^2}{(t-\tau)}}}{(|x-y^*|^2 + (t-\tau))^{\frac{n}{2}}(t-\tau)^{\frac{1}{2}}} \frac{1}{|y|^\alpha} dyd\tau \nonumber\\
    &=:\int_0^t \frac{1}{\tau^{\frac{1}{2} + \frac{n}{2p}}} \left( J_2^1 + J_2^2\right) \, d\tau. \label{e}
\end{align}
For $J_2^1$, since $|y|=|y^*|$, we have $|x-y^*|>\frac{|x|}{2}$, we observe
\begin{equation}\label{stimaJ_2^1}
    J_2^1 \leq \frac{1}{|x|^n} \int_{|y|<\frac{|x|}{2}} \frac{1}{(t-\tau)^{\frac{1}{2}}} \frac{1}{|y|^\alpha} \, dy \leq \frac{c}{|x|^\alpha (t-\tau)^{\frac{1}{2}}}.
\end{equation}
Moreover for $J_2^2$ by simple computations, we have
\begin{align}
  J_2^2 & \leq \frac{1}{|x|^\alpha}\int_0^{+\infty} e^{-\frac{\hat{a}y_n^2}{(t-\tau)}} \int_{\R^{n-1}}\frac{1 }{(|x'-y'|^2 +x_n^2 + y_n^2 + (t-\tau))^{\frac{n}{2}}(t-\tau)^{\frac{1}{2}}} \, dy' dy_n \nonumber \\
  & \leq \frac{c}{|x|^\alpha (t-\tau)^\frac{1}{2}}\int_0^{+\infty}  \frac{e^{-\frac{\hat{a}y_n^2}{(t-\tau)}} }{(x_n^2 + y_n^2 + (t-\tau))^{\frac{1}{2}}}\, dy_n \leq \frac{c}{|x|^\alpha (t-\tau)^\frac{1}{2}}. \label{stimaJ_2^2}.
\end{align}
Now, using \eqref{stimaJ_2^1} and \eqref{stimaJ_2^2} in \eqref{e}, we get
\begin{equation}
    J_2 \leq \frac{cD}{|x|^\alpha} \int_0^t \frac{1}{\tau^{\frac{1}{2} + \frac{n}{2p}}(t-\tau)^\frac{1}{2}} \, d\tau \leq \frac{cD}{t^{\frac{n}{2p}}|x|^\alpha}. \label{stimaJ_2}.
\end{equation}
Estimates \eqref{stimaJ_1} and \eqref{stimaJ_2} used in \eqref{d}, recalling the definition of $D$, ensure the estimate \eqref{sppnl}$_1$. Following the same arguments line, one easly get  \eqref{sppnl}$_2$. Then, we limit ourselves to prove \eqref{infinitesimopnl}.\par
Assume that $a \equiv b$. As before, we have
  \begin{align}
      |\bar{u}(x,t)| & \leq c\int_0^t \int_{\R^n_+}\ssfrac{|b(\tau, y)|^2}{(|x-y|^2 + (t-\tau))^{\frac{n+1}{2}}} \, dy d\tau + c\int_0^t \int_{\R^n_+}\ssfrac{|b(\tau, y)|^2 \,e^{-\frac{\hat{a}y_n^2}{(t-\tau)}}}{(|x-y^*|^2 + (t-\tau))^{\frac{n}{2}}(t-\tau)^{\frac{1}{2}}} \, dy d\tau \nonumber := I_1 + I_2\,.\label{f}
  \end{align} 
By hypothesis, for all $\varepsilon >0$, there exists an instant $\ov{t}$ such that, for all $t\in (0, \ov{t})$
\begin{align*}
 I_1 & \leq \!\int_0^t\!\int_{|y|<\frac{|x|}{2}}\!\!\ssfrac{|b(\tau,y)|^2}{(|x-y|^2+(t-\tau))^{\frac{n+1}{2}}}\,dy\,d\tau
+  \!\int_0^t\!\int_{|y|>\frac{|x|}{2}}\!\!\ssfrac{|b(\tau,y)|^2}{(|x-y|^2+(t-\tau))^{\frac{n+1}{2}}}\,dy\,d\tau \\
&\leq 
 c\, \varepsilon^2 \bigg( 
{|x|^{-n}}\int_0^t \tau^{\frac{n}{p}}(t-\tau)^{-\frac{1}{2}}\int_{|y|<\frac{|x|}{2}}{|y|^{-2\alpha}}\,dy\,d\tau + 
{|x|^{-2\alpha}}\int_0^t {\tau^{\frac{n}{p}}(t-\tau)^{-\frac{1}{2}}}\,d\tau\bigg)\\
&\leq c\, \varepsilon ^2 {|x|^{-2\alpha}\, t^{\frac 12 -\frac{n}{p}}}\,.
\end{align*}
To estimate $I_2$, it is enough to argue as above and proceed as for $J_2$. Thus, the estimate \eqref{infinitesimopnl} follows. 
\end{proof}
\section{Proof of the main results}\label{sect. 5}
We are in a position to establish the main results of our work, namely the existence and uniqueness theorems for the problem under consideration.\\ As in \cite{MP}, the strategy relies on the construction of approximate solutions through an iterative scheme based on the integral formulation of the equation.\par
We set 
\be
    \mathcal{G}[h] := \int_{\R^n_+} \mathcal{G}(t,x,y)h(y)\,dy; \label{G}
\ee
\be
\mathcal{S}[g \cdot \n g] := \int_0^t \int_{\mathbb{R}^n_+}
\mathcal{G}(t-\tau,x,y)\,P [g(\tau,y) \cdot \n g(\tau,y)]\,dy\,d\tau \label{S}.
\ee

For $m\in \N$, we consider the following approximation scheme:
\begin{equation}\label{definizione_schema_iterativo}
u^m(t,x)= \mathcal{G}[u_0] - \mathcal{S}[u^{m-1} \cdot \n u^{m-1}]\,.
\end{equation}
where $u^{m-1}\equiv 0$ for $m=0$, and $u^0=\mathcal{G}[u_0]$ is the solution given in Theorem$\,$\ref{Theoremprincipale}.\par
We set
\begin{equation}\label{definizione_funzionale}
\widetilde{\tm} u(t)\tm\!\!:= \!\sup_{(0,t)} \tau^{\frac{1}{2}}\dm u(\tau)\dm _{\infty}\! + \sup_{(0,t)}\tau^{\frac{n}{2}(\frac{1}{n}-\frac{1}{q})}\dm u(\tau)\dm _q\!+\sup_{(0,t)}\sup_{\R^n}\tau^{\frac{n}{2p}}|y|^{\alpha}|u(\tau,y)|\! + \dy \sup_{(0,t)}\tau^{\frac{n}{2} \left( \frac{1}{n}-\frac{1}{q} \right) +\frac{1}{2}} \dm \n u(\tau) \dm_{q},
\end{equation}
and in the following, for the sake of readability, we just write $K(t,\rho)$ in place of $K_{u_0}(t,\rho)$ defined in \eqref{krho}.
\begin{lemma}\label{lemmaiterativo}{\sl 
Assume that $u_0 \in L^p_{\texttt{w}}(\R^n_+)$. Then there exist constants $c,c_2>0$, independent of $u_0$, such that for any element of the sequence $\lbrace u^m \rbrace_{m\in \N}$ defined as in \eqref{definizione_schema_iterativo}, the following estimate holds for every $T>0$:
\begin{equation}\label{stimaiter}
\widetilde{\tm} u^m(t)\tm\leq K(t,\rho) + c\,\dm u_0\dm _{\texttt{w},p}^{1-\gamma}K(t,\rho)^{\gamma}+ c_2\,\widetilde{\tm} u^{m-1}(t)\tm^{2}\,, \quad \text{for all } t\in(0,T)\, \text{and }m \in \N\,,
\end{equation}
where $\gamma=1-\frac{n}{q}$.
}
\end{lemma}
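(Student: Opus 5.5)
The estimate will follow by splitting $u^m$ according to \eqref{definizione_schema_iterativo} and bounding each of the four terms in $\widetilde{\tm}\cdot\tm$ separately, once for the linear part $\mathcal G[u_0]$ and once for the nonlinear part $\mathcal S[u^{m-1}\cdot\n u^{m-1}]$. By the triangle inequality it suffices to show two things. First,
\[\widetilde{\tm}\mathcal G[u_0](t)\tm\le K(t,\rho)+c\dm u_0\dm_{\texttt{w},p}^{1-\gamma}K(t,\rho)^\gamma .\]
Second,
\[\widetilde{\tm}\mathcal S[u^{m-1}\cdot\n u^{m-1}](t)\tm\le c_2\,\widetilde{\tm}u^{m-1}(t)\tm^2 .\]
For $m=1$ the second bound is empty, since $u^{0-1}\equiv0$; in general it holds with the right-hand side possibly infinite. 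When the right-hand side is infinite there is nothing to prove.

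\textbf{Linear part.} Take $\vp=\mathcal G[u_0]$ as in \eqref{snellimento} with $g=u_0$. Lemma \ref{piclinf} gives the $\infty$-term via \eqref{stimapiclinf} and the weighted pointwise term via \eqref{picpunt}, each bounded by $K(t,\rho)$. Lemma \ref{LemmapiccLq} gives the $L^q$-term via \eqref{piccLq} and the gradient term via \eqref{piccgradLq}, each bounded by $c\dm u_0\dm_{\texttt{w},p}^{1-\gamma}K(t,\rho)^\gamma$. Taking $\sup_{(0,t)}$ needs monotonicity in $\tau$. The quantity $\dm u_0\dm_{\texttt{w},p,\rho}$ does not depend on $\tau$, and $e^{-\rho^2/8\tau}$ is increasing in $\tau$, so $K(\tau,\rho)\le K(t,\rho)$ for $\tau\le t$. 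Hence all suprema are controlled by the values at $t$. Strictly, Lemma \ref{piclinf} produces $cK$, and the constant is absorbed into $c_0,c_1$ in \eqref{krho}.

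\textbf{Nonlinear part.} Since $\n\cdot u^{m-1}=0$, we have $u^{m-1}\cdot\n u^{m-1}=\n\cdot(u^{m-1}\otimes u^{m-1})$, with the trace vanishing on $x_n=0$. Thus $\mathcal S[\cdot]$ is, up to sign, exactly $\bar u$ in \eqref{rapfabmod} with $a=b=u^{m-1}$. Each piece is then handled by an earlier lemma:
\begin{itemize}
\item The $L^q$-term comes from Lemma \ref{stimaq}. It gives the bound $c\,(\sup\tau^{\frac n2(\frac1n-\frac1q)}\dm u^{m-1}\dm_q)^2\le c\,\widetilde{\tm}u^{m-1}\tm^2$, provided the supremum there is read as the product of the two suprema of $\dm a\dm_q$ and $\dm b\dm_q$; the time powers in its proof add up correctly.
\item The gradient term comes from Lemma \ref{LEmmaPNLGRADLq}. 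Both products on its right-hand side are at most $\widetilde{\tm}u^{m-1}\tm^2$.
\item The $L^\infty$ term comes from Lemma \ref{manca}, with $\sup\tau^{1/2}\dm a\dm_\infty$ times the $L^q$ quantity of $b$.
\item The weighted pointwise term comes from \eqref{sppnl}$_1$ of Lemma \ref{PPNLF}, with $a=b=u^{m-1}$.
\end{itemize}
All the right-hand sides are sums of products of two summands of $\widetilde{\tm}u^{m-1}\tm$, so they are bounded by $c_2\widetilde{\tm}u^{m-1}(t)\tm^2$. Finally, each bound is stated at time $t$ with suprema over $(0,t)$ or $(0,T)$. Applying the bound at every $\tau\le t$ and using that the suprema over $(0,\tau)$ increase with $\tau$ shows the left-hand suprema are controlled as well.

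\textbf{Main obstacle.} The delicate step is checking that the hypotheses of the nonlinear lemmas are really met. First, $u^{m-1}\otimes u^{m-1}$ must be regular enough for the representation \eqref{rapfabmod} and its integrations by parts; this includes $u^{m-1}|_{x_n=0}=0$ and the gradient bound needed in Lemma \ref{LEmmaPNLGRADLq}. Second, the suprema in those lemmas are taken over $(0,T)$ rather than $(0,t)$. I would settle both by induction on $m$. Each $u^{m-1}$ is smooth for $t>0$ and vanishes on the boundary, being a combination of Stokes solutions with the kernel $\mathcal G$. The proofs of the lemmas only use the supremum up to the current time $t$, because the time integrals run over $(0,t)$, so they hold with $(0,T)$ replaced by $(0,t)$.
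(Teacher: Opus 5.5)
Your proposal is correct and follows essentially the same route as the paper: split $u^m$ via \eqref{definizione_schema_iterativo} and bound the linear part by Lemmas \ref{piclinf} and \ref{LemmapiccLq}. (At the corresponding places the paper invokes Lemma \ref{interpol} and, for the weighted pointwise term, Corollary \ref{andamenti_asintotici}; your \eqref{picpunt} is the better citation there, since it gives $K(t,\rho)$ for all $x$.) Then bound the nonlinear part by Lemmas \ref{stimaq}--\ref{PPNLF} with $a=b=u^{m-1}$, and pass to suprema over $(0,t)$ using that $K(\cdot,\rho)$ is nondecreasing. One small slip: the nonlinear term vanishes only for $m=0$, whereas for $m=1$ it is $\mathcal S[u^0\cdot\nabla u^0]$ with $u^0=\mathcal G[u_0]\neq 0$; your general argument with $a=b=u^{m-1}$ already covers that case, so nothing is lost.
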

\begin{proof}
Let $m=1$. From definition \eqref{definizione_schema_iterativo}, by virtue of Lemma\,\ref{piclinf} and Lemma\,\ref{manca}, for all $s>0$ and $\rho>0$, we get
\begin{align*}
s^{\frac{1}{2}}\dm u^1(s)\dm _{\infty}& \leq s^\frac{1}{2} \dm \mathcal{G}[u_0](s)\dm_\infty + s^\frac{1}{2} \dm \mathcal{S}[u_0 \cdot \n u_0 ](s)\dm_\infty \\
&\leq s^{\frac{1}{2}}\dm u^0 (s)\dm _{\infty} + c\,\Big(\sup_{(0,s)} \tau^{\frac{1}{2}}\dm u^0(\tau)\dm _{\infty}+\sup_{(0,s)}\tau^{\frac{n}{2}(\frac{1}{n}-\frac{1}{q})}\dm u^0(\tau)\dm _q + \sup_{(0,s)}\tau^{\frac 12+\frac{n}{2}(\frac{1}{n}-\frac{1}{q})}\dm \n u^0(\tau)\dm _q\Big)^2 \\
&\leq K(s,\rho) + c\widetilde{\tm} u^0(s)\tm^2\,.
\end{align*}
Hence, being the right-hand side   an increasing function of $s$, on any interval $(0,t)$, we get 
\be\sup_{(0,t)} s^\frac12\dm u^1(s)\dm_\infty\leq K(t,\rho)+c\widetilde{\tm}u^0(t)\tm^2\,. \label{infi}\ee
We claim that the following estimate hold
$$s^{\frac{n}{2}(\frac{1}{n}-\frac{1}{q})}\dm u^1(s)\dm_q \leq s^{\frac{n}{2}(\frac{1}{n}-\frac{1}{q})} \dm \mathcal{G}[u_0](s)\dm_q + s^{\frac{n}{2}(\frac{1}{n}-\frac{1}{q})} \dm \mathcal{S}[u_0 \cdot \n u_0 ](s)\dm_q. $$
Actually, applying Lemma \ref{interpol}, together with Lemma \ref{piclinf}, we have
\be s^{\frac{n}{2}(\frac{1}{n}-\frac{1}{q})} \dm \mathcal{G}[u_0]\dm_q \leq c s^{\frac{n}{2}(\frac{1}{n}-\frac{1}{q})}  \dm \mathcal{G}[u_0]\dm^{1-\gamma}_{\texttt{w},p} \dm \mathcal{G}[u_0]\dm_\infty^{\gamma} \leq c \,\dm u_0\dm^{1-\gamma}_{\texttt{w},p} K (s, \rho)^{\gamma}\label{ql}\,. \ee
Now, Lemma \ref{stimaq} ensures
\be s^{\frac{n}{2}(\frac{1}{n}-\frac{1}{q})} \dm \mathcal{S}[u_0 \cdot \n u_0 ]\dm_q  \leq c \widetilde{\tm} u_0 (s) \tm^2\,, \label{qnl}\ee
and summing this estimate to \eqref{ql}, we obtain
\begin{align*}
s^{\frac{n}{2}(\frac{1}{n}-\frac{1}{q})}\dm u^1(s)\dm _q  \leq c\dm u_0\dm _{\texttt{w},p}^{1-\gamma}K(s,\rho)^{\gamma} + c\,\widetilde{\tm} u^0(s)\tm^2\,.
\end{align*}
The same argument lines employed for $L^\infty$-estimate lead to the following:
\be \sup_{ (0,t)}s^{\frac{n}{2}(\frac{1}{n}-\frac{1}{q})}\dm u^1(s)\dm _q  \leq c\dm u_0\dm _{\texttt{w},p}^{1-\gamma}K(t,\rho)^{\gamma} + c\,\widetilde{\tm} u^0(t)\tm^2\,.
\label{normq}
\ee 
Moreover, taking in account Corollary \ref{andamenti_asintotici} for the linear part and the estimate \eqref{sppnl}$_1$ for the non linear term, we get
\begin{align*}
s^{\frac{n}{2p}}|x|^{\alpha}|u^1(s,x)|\leq s^{\frac{n}{2p}}|x|^{\alpha}|u^0(s,x)| + c\sup_{(0,s)}\tau^{\frac{1}{2}}\dm u^0(\tau)\dm_{\infty}\sup_{(0,s)}\tau^{\frac{n}{2p}}\dm|y|^{\alpha}u^0(\tau)\dm_\infty\leq K(s,\rho) + c \,\widetilde{\tm} u^0(s) \tm^{2}\,.
\end{align*}
Hence, we get
\be \sup_{(0,t)}s^{\frac{n}{2p}}|x|^{\alpha}|u^1(s,x)|\leq K(t,\rho) + c \,\widetilde{\tm} u^0(t) \tm^{2}\,. \label{punt} \ee
Finally, for all $s \in (0,t)$, we have
$$\dy s^{\frac{n}{2} \left( \frac{1}{n}-\frac{1}{q} \right) +\frac{1}{2}} \dm \n u^1(s) \dm_{L^q(\R^n_+)} \leq s^{\frac{n}{2} \left( \frac{1}{n}-\frac{1}{q} \right) +\frac{1}{2}} \dm \n \mathcal{G}[u_0](s) \dm_{L^q(\R^n_+)} + s^{\frac{n}{2} \left( \frac{1}{n}-\frac{1}{q} \right) +\frac{1}{2}} \dm \n  \mathcal{S}[u_0](s) \dm_{L^q(\R^n_+)}\,. $$
Now, Lemma \ref{LemmapiccLq} implies 
\be  \label{GLQLN}
s^{\frac{n}{2} \left( \frac{1}{n}-\frac{1}{q} \right) +\frac{1}{2}} \dm \n \mathcal{G}[u_0](s) \dm_{L^q(\R^n_+)} \leq \dm u_0 \dm_{\texttt{w},p}^{1-\gamma} K(s, \rho)^\gamma,
\ee
and Lemma \ref{LEmmaPNLGRADLq}
\be  \label{GLQNL}
s^{\frac{n}{2} \left( \frac{1}{n}-\frac{1}{q} \right) +\frac{1}{2}} \dm \n \mathcal{S}[u_0 \cdot \n u_0](s) \dm_{L^q(\R^n_+)} \leq \dm u_0 \dm_{\texttt{w},p}^{1-\gamma} K(s, \rho)^\gamma +  c \tm u_0 \tm^2.
\ee
Thus, we have
\be  \label{GLQ}
\dy \sup_{(0,t)}s^{\frac{n}{2} \left( \frac{1}{n}-\frac{1}{q} \right) +\frac{1}{2}} \dm \n u^1(s) \dm_{L^q(\R^n_+)} \leq c \dm u_0 \dm_{\texttt{w},p}^{1-\gamma} K(s, \rho)^\gamma+ c \tm u_0 \tm^2.
\ee
Summing \eqref{infi}, \eqref{normq}, \eqref{punt}, \eqref{GLQ} we obtain
\begin{equation}\label{approssimazione_funzionale}
\widetilde{\tm} u^1(s)\tm\leq K(s,\rho)+  c\,\dm u_0\dm _{\texttt{w},p}^{1-\gamma}K(s,\rho)^{\gamma} + 3c\, \widetilde{\tm} u^0(s)\tm^{2},
\end{equation}
with constant $c$ independent of the datum $u_0$.\\
Now, let $m>1$ and let assume that $u^m$ satisfies the property \eqref{stimaiter}. Following the same line as above, since $u^m$ satisfies the hypothesis of Lemma\,\ref{piclinf} -\,\ref{PPNLF}, property \eqref{stimaiter} holds for all $m\in \N$.
\end{proof}
\begin{lemma}\label{lemmanumerico}{\sl 
Let $\xi_0 >0$ and $c>0$. Let $\{\xi_m\}$ be a nonnegative sequence of real numbers such that:
\begin{equation*}
\xi_m\leq \xi_0+c\,\xi_{m-1}^2.
\end{equation*}
Assume $1-4\xi_0>0$ and  $\xi_0\leq \xi$, where $\xi$ is the minimum solution of algebraic equation $c\xi^2-\xi+\xi_0=0$. Then $\xi_{m-1}\leq \xi$ for all $m\in \N$.}
\end{lemma}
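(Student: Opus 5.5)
The plan is a straightforward induction on $m$, driven by the fact that the map $\Phi(s):=\xi_0+c\,s^2$ is nondecreasing on $[0,\infty)$ and has $\xi$ as a fixed point. First I make sure $\xi$ is well defined. The equation $c\xi^2-\xi+\xi_0=0$ has discriminant $1-4c\xi_0$, so it has real roots exactly when $1-4c\xi_0>0$. I read the hypothesis ``$1-4\xi_0>0$'' as this condition; with $c$ normalized to $1$ it coincides with the literal one. The smaller root is
$$\xi=\frac{1-\sqrt{1-4c\xi_0}}{2c}>0,$$
and by construction $\xi_0+c\,\xi^2=\xi$, i.e. $\Phi(\xi)=\xi$. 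In particular $\xi\ge\xi_0$ automatically, so the assumption $\xi_0\le\xi$ is consistent with (indeed implied by) the definition of $\xi$.

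Next I run the induction on $m\in\N$ with claim $\xi_{m-1}\le\xi$.

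\emph{Base case} ($m=1$). The claim reads $\xi_0\le\xi$, which is the hypothesis.

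\emph{Inductive step.} Assume $0\le\xi_{m-1}\le\xi$. The sequence is nonnegative and $s\mapsto s^2$ is increasing on $[0,\infty)$, so $\xi_{m-1}^2\le\xi^2$. Hence
$$\xi_m\le\xi_0+c\,\xi_{m-1}^2\le\xi_0+c\,\xi^2=\xi,$$
which is the claim for $m+1$.

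There is no real obstacle here. The only points needing care are the use of nonnegativity of $\xi_{m-1}$, which is needed to pass from $\xi_{m-1}\le\xi$ to $\xi_{m-1}^2\le\xi^2$, and the interpretation of the discriminant condition so that $\xi$ is a real, positive fixed point of $\Phi$. In the application I expect the lemma to be used with $\xi_m=\widetilde{\tm} u^m(t)\tm$ and $\xi_0=K(t,\rho)+c\dm u_0\dm_{\texttt{w},p}^{1-\gamma}K(t,\rho)^\gamma$, via \eqref{stimaiter}. The smallness $4c_2\xi_0<1$ would then be secured for $t$ small, using the absolute continuity built into $K(t,\rho)$: choose $\rho$ small, then $t$ small. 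For small $\dm u_0\dm_{\texttt{w},p}$ it holds for all $t$.
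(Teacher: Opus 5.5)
Your proof is correct. The paper gives no argument of its own and simply refers to Solonnikov's Lemma~10.2, whose content is exactly this induction: the map $s\mapsto \xi_0+c\,s^2$ is nondecreasing on $[0,\infty)$, and its smaller fixed point is $\xi=\xi_0+c\,\xi^2$. You are also right to read the hypothesis as $1-4c\,\xi_0>0$, since the literal $1-4\xi_0>0$ is a typo; this is precisely the condition $1-4c_2(\cdots)>0$ that the paper imposes when it applies the lemma in Lemma~\ref{convergenza}.
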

\begin{proof}
For the proof we remind to \cite{solo}, Lemma 10.2.
\end{proof}
\begin{lemma}\label{convergenza}
  \sl{ Let $\{u^m\}$ be the sequence defined in \eqref{definizione_schema_iterativo} corrisponding to $u_0\in L^p_{\texttt{w}}(\R^n_+)$. Then, there exists $T(u_0)>0$ such that, for all $\eta>0$, the sequence strongly converges in $C((\eta,T(u_0))\times\R^n_+)$ to a solution $u$ of \eqref{Soluzione}. In particular, for a suitable $\rho$ and for all $t\in [0,T(u_0))$, we get
\begin{equation}\label{A(ro,t)_enunciato}
\widetilde{\tm} u(t)\tm  \leq \sfrac{2\left[K(t,\rho)+  c\dm u_0\dm _{\texttt{w},p}^{1-\gamma}K(t,\rho)^{\gamma}\right]}{1+\left[1-4c_2\,\left(K(t,\rho)+  c\dm u_0\dm _{\texttt{w},p}^{1-\gamma}K(t,\rho)^{\gamma}\right)\right]^{\frac{
1}{2}}}\leq \frac{1}{4c}
\end{equation}
and, in particular
\begin{gather}\label{andamento_lemma}
t^\frac n{2p}\dm u(t)|x|^\alpha\dm_\infty+t^{\frac{1}{2}}\dm u(t)\dm _{\infty}+t^{\frac{n}{2}(\frac{1}{n}-\frac{1}{q})}\dm u(t)\dm _q + t^{\frac{n}{2}(\frac{1}{n}-\frac{1}{q})+\frac{1}{2}}\dm \n u(t)\dm _q \leq K(t,\rho)+  c\dm u_0\dm _{\alpha,p}^{1-\gamma}K(t,\rho)^{\gamma},
\end{gather}
where $\gamma=1-\frac{n}{q}$. \par
Moreover, the following limit property holds
\begin{equation}\label{limit property_lemma}
    \lim_{t \to 0^+} \tm u(t)\tm=0\,.
\end{equation}
}
\end{lemma}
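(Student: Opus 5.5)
We follow the scheme of \cite{MP}: first obtain uniform bounds on the sequence from Lemma\,\ref{lemmaiterativo} and Lemma\,\ref{lemmanumerico}, then prove convergence via a contraction-type estimate on the differences, and finally pass to the limit.

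\emph{Step 1: choice of $T(u_0)$.} Set $\xi_0(t):=K(t,\rho)+c\dm u_0\dm_{\texttt{w},p}^{1-\gamma}K(t,\rho)^{\gamma}$. The quantity $\dm u_0\dm_{\texttt{w},p,\rho}$ tends to zero as $\rho\to0$, by the absolute continuity of the integral applied to $|u_0|^p\texttt{w}^p\in L^1$; uniformity in $x$ follows from a standard approximation by compactly supported functions. We therefore first fix $\rho$ so small that $c_0\dm u_0\dm_{\texttt{w},p,\rho}$ is below a threshold. Then, since $e^{-\rho^2/8t}\to0$ as $t\to0^+$, we choose $T(u_0)$ so that $c_1e^{-\rho^2/8t}\dm u_0\dm_{\texttt{w},p}$ is also small for $t<T$. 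With these choices $4c_2\xi_0(t)<1$ on $(0,T)$, and $\xi_0(t)$ is nondecreasing in $t$. Lemma\,\ref{lemmaiterativo} gives $\xi_m\le\xi_0+c_2\xi_{m-1}^2$ with $\xi_m:=\widetilde{\tm}u^m(t)\tm$. Lemma\,\ref{lemmanumerico} then yields $\xi_m\le\xi(t)=\frac{2\xi_0}{1+(1-4c_2\xi_0)^{1/2}}\le\frac1{4c}$ for all $m$, uniformly in $m$ for each $t<T$. If $\dm u_0\dm_{\texttt{w},p}$ is small, $K\le c\dm u_0\dm_{\texttt{w},p}$ and no restriction on $T$ is needed.

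\emph{Step 2: convergence.} Set $w^m:=u^{m+1}-u^m=-\mathcal S[u^m\cdot\n u^m-u^{m-1}\cdot\n u^{m-1}]$. Writing $u^m\cdot\n u^m-u^{m-1}\cdot\n u^{m-1}=\n\cdot(w^{m-1}\otimes u^m+u^{m-1}\otimes w^{m-1})$, we may use divergence-freeness and the vanishing trace of the $u^m$ to express $w^m$ through \eqref{rapfabmod} with $(a,b)=(w^{m-1},u^m)$ and $(u^{m-1},w^{m-1})$. Lemmas\,\ref{stimaq}, \ref{LEmmaPNLGRADLq}, \ref{manca} and \ref{PPNLF} are bilinear, so
\[
\widetilde{\tm}w^m(t)\tm\le c\big(\widetilde{\tm}u^m(t)\tm+\widetilde{\tm}u^{m-1}(t)\tm\big)\widetilde{\tm}w^{m-1}(t)\tm\le 2c\,\xi(t)\,\widetilde{\tm}w^{m-1}(t)\tm .
\]
Shrinking the threshold in Step 1 if needed gives $2c\xi<1$, so the series $\sum_m w^m$ converges in the $\widetilde{\tm}\cdot\tm$ metric. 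The main obstacle is that this metric is weighted by powers of $t$ near $0$ and lacks control of $\n u$ in $L^\infty$. Consequently uniform convergence is only available in $C((\eta,T)\times\R^n_+)$, via the $t^{1/2}\dm\cdot\dm_\infty$ term, for each $\eta>0$. We must also check that each term involving $\n w$ is handled by the $L^q$ gradient component; the bilinear lemmas do this, since they only require $\n b$ in $L^q$.

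\emph{Step 3: limit.} The bounds pass to the limit $u$ by lower semicontinuity of each component of $\widetilde{\tm}\cdot\tm$, which gives \eqref{A(ro,t)_enunciato} and hence \eqref{andamento_lemma}. Passing to the limit in \eqref{definizione_schema_iterativo} is justified by the convergence in the metric together with the bilinear estimates, so $u$ solves \eqref{Soluzione}. For \eqref{limit property_lemma}, note that $\tm u(t)\tm\le\widetilde{\tm}u(t)\tm\le 2\xi_0(t)$. For any $\rho>0$ we have $\limsup_{t\to0}\xi_0(t)\le c_0\dm u_0\dm_{\texttt{w},p,\rho}+c(c_0\dm u_0\dm_{\texttt{w},p,\rho})^\gamma\dm u_0\dm_{\texttt{w},p}^{1-\gamma}$, and this bound tends to zero as $\rho\to0$.
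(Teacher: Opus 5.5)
Your proposal is correct and follows essentially the same route as the paper: you choose $\rho$ by absolute continuity of the integral, get uniform bounds from Lemma~\ref{lemmaiterativo} and Lemma~\ref{lemmanumerico}, obtain geometric decay of the differences $u^{m+1}-u^m$ from the bilinear Lemmas~\ref{stimaq}--\ref{PPNLF}, and prove the limit property by letting $\rho\to0$. Your $\limsup$ argument in Step~3 is exactly the property (P) the paper imports from \cite{CM-L3}, but you should say explicitly that for each smaller $\rho'$ the bound $\widetilde{\tm}u(t)\tm\le 2\xi_0(t,\rho')$ is re-derived on its own shorter time interval, since Step~1 only gives it for the fixed $\rho$ (this works because Lemma~\ref{lemmaiterativo} holds for every $\rho$).
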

\begin{proof}
Let $u_0 \in L^p_{\texttt{w}}(\R^n)$ and fix $\varepsilon \in \bigl(0,\frac{1}{4c_2 c_0}\bigr)$. By definition of $\dm \cdot \dm_{\texttt{w},p,\rho}$ in \eqref{normarho}, there exists $\rho=\rho(u_0,\varepsilon)$ such that $\dm u_0\dm_{\texttt{w},p,\rho}<\varepsilon$. \\
For such a choice of $\rho$, we consider
\begin{equation}\label{istante_sup_esistenza}
t(\rho):= \displaystyle \sup\Big\{t>0:\ 1-4c_2\Big(K(t,\rho)+c\,\dm u_0\dm_{\texttt{w},p}^{1-\gamma}K(t,\rho)^{\gamma}\Big)>0\Big\},
\end{equation}
We define $ T(u_0):= \dy \sup_{\rho>0}t(\rho)$.

We first derive uniform bounds for the approximating sequence. Combining \eqref{stimaiter} with Lemma~\ref{lemmanumerico}, it follows that for any fixed $\rho$ and all $t\in[0,T(u_0))$,
\begin{equation}\label{A(ro,t)}
\widetilde{\tm} u^m(t)\tm  \leq \sfrac{2\left[K(t,\rho)+  c\dm u_0\dm _{\texttt{w},p}^{1-\gamma}K(t,\rho)^{\gamma}\right]}{1+\left[1-4c_2\,\left(K(t,\rho)+  c\dm u_0\dm _{\texttt{w},p}^{1-\gamma}K(t,\rho)^{\gamma}\right)\right]^{\frac{
1}{2}}}=:A(\rho,t).
\end{equation}
In particular, $\{\tm u^m\tm\}$ is uniformly bounded on $[0,T(u_0))$.\par
To prove convergence, it is convenient to work with the sequence $w^m:=u^m-u^{m-1}$. A direct computation based on \eqref{definizione_schema_iterativo} shows that
\begin{equation*}
w^{m+1}= \int_0^t \int_{\mathbb{R}^n_+}
\mathcal{G}(t-\tau,x,y)\,P [w^m \cdot \n u^m((y,\tau)]\,dy\,d\tau - \int_0^t \int_{\mathbb{R}^n_+}
\mathcal{G}(t-\tau,x,y)\,P [u^{m-1} \cdot \n w^m((y,\tau)]\,dy\,d\tau.
\end{equation*}
Using Lemmas~\ref{stimaq}--\ref{PPNLF} together with the bound \eqref{A(ro,t)}, we obtain 
\begin{equation}\label{assoluta_convergenza}
\widetilde{\tm} w^m(t)\tm \leq C\,\big(cA(\rho,t)\big)^m A(\rho,t),
\end{equation}
for a suitable constant $C>0$. By definition of $T(u_0)$, we have $A(\rho,t)<\frac{1}{2c}$ for $t\in(0,T(u_0))$, hence the series is convergent. Therefore, writing
\begin{equation}\label{series}
u^m=u^0+\sum_{i=1}^m w^i,
\end{equation}
we deduce that $\{u^m\}$ is a Cauchy sequence with respect to $\widetilde{\tm}\cdot\tm$ and thus converges to some limit function $u$ that, in particular, enjoys of estimates \eqref{A(ro,t)_enunciato} and \eqref{andamento_lemma}. In particular, these estimates ensure that the nonlinear term $\int_0^t \int_{\mathbb{R}^n_+}
\mathcal{G}(t-\tau,x,y)\,P [u \cdot \n u((y,\tau)]\,dy\,d\tau$ is well defined. Passing to the limit in the integral formulation (using dominated convergence, justified by the uniform bound \eqref{A(ro,t)}), we obtain
\begin{equation}\label{rappresentazione_integrale_soluzione}
u(t,x)=\mathcal{G}[u_0](t,x)-\mathcal{S}[(u\cdot \n u)](t,x).
\end{equation}
Moreover, since the convergence is uniform on compact subsets of $(\eta,T)\times\R^n$ and each $u^m$ is continuous, the limit $u$ is continuous as well.
We now analyze the property \eqref{limit property_lemma}. From \cite{CM-L3}, the following property holds:
\begin{itemize}
\item[(P)] for every sequence $t_p\to 0^+$, one can find $\rho_p\to 0^+$ such that
\[
1-4c_2\Big(K(t_p,\rho_p)+c\,\dm u_0\dm_{\texttt{w},p}\Big)>0,
\quad \text{and} \quad A(\rho_p,t_p)\to 0.
\]
\end{itemize}
Applying Lemma~\ref{lemmanumerico} along these sequences, we deduce
\begin{equation}\label{stima(P)}
\widetilde{\tm} u^m(t_p)\tm \leq A(\rho_p,t_p).
\end{equation}
Passing to the limit and recalling the definition of the functional $\tm\cdot\tm$, we obtain \eqref{limit property_lemma}.
\end{proof}
\begin{lemma}\label{andamento_a_0_parte_non_lineare}{\sl Let $u$ a vector field enjoying \eqref{andamento_sol_q} and $\ov u$ defined by formula \eqref{ra}.  \begin{itemize}\item
If $p \in (n,2n)$, then, for all $q\in (n,2n)$, we get 
\begin{equation}\label{prop_L_parte_non_lineare}
\dm \overline{u}(t)\dm _{\frac{q}{2}}\leq c(u_0)\, t^{-\frac12 +\frac{n}{q}}\qquad \mbox{for all }t\in(0,T)\,;
\end{equation}
\item If $p=2n$, then we get
\begin{equation}\label{proprieta_L_weak_parta_non_lineare1}
\dm u(t)\dm_{(n,\infty)}<\infty\,,\;t\in(0,T), \; \; \mbox{ and } \; \; \lim_{t\to 0^+}\dm\overline{u}(t)\dm _{(n,\infty)}=0\,;
\end{equation}
\item Finally, if $p>2n$, then we get
\begin{equation}\label{proprieta_L_weak_parta_non_lineare}
\dm \overline{u}(t)\dm _{(\frac{np}{2(p-n)},\infty)}\leq {c(u_0)}\,t^{\frac{1}{2}-\frac{n}{p}}\qquad \mbox{for all }t\in(0,T).
\end{equation}
\end{itemize} }
\end{lemma}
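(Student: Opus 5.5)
The plan is to estimate $\overline u$ directly from the representation \eqref{rapfabmod}, with $a=b=u$ and $F_{pr}=u_pu_r$. As in the proof of Lemma\,\ref{LEmmaPNLGRADLq}, the last term $\int_0^t\!\int G_{ij}h_j$ is rewritten as the linear combination \eqref{lincomK} of integrals against $D_{x_sx_m}K_{ijq}$. In every case the bound takes the form
$$\dm\overline u(t)\dm_X\le c\int_0^t \mathcal K(t-\tau)\,\dm |u(\tau)|^2\dm_{X}\,d\tau,$$
where $X$ is the target (Lorentz) space and $\mathcal K(s)$ is an operator-norm bound on $X$ for the integral operators with kernels $D_yG(s,x,y)$ and $D^2_{x'}K(s,x,z)$.

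\textbf{Step 1: operator bounds for the kernels.} Take $r=1$ in \eqref{StimeyoungKernellx}--\eqref{StimeyoungKernelly} with $|l|=1$, and in \eqref{stimeLrKx}--\eqref{stimeLrKz} with two tangential $x$-derivatives. This gives
$$\sup_x\dm D_yG(s,x,\cdot)\dm_1+\sup_y\dm D_yG(s,\cdot,y)\dm_1\le c\,s^{-\frac12},$$
and the same bound for $D_{x_sx_m}K$. By the Schur test these operators are bounded on every $L^r(\R^n_+)$, $r\in[1,\infty]$, with norm $\le c\,s^{-1/2}$. By real interpolation they are then also bounded on every $L^{r,\infty}(\R^n_+)$, $r\in(1,\infty)$, with the same bound. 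This is the step I expect to need the most care, since the kernels are not of convolution type; the two-sided $L^1$ bounds of Lemmas\,\ref{lemmaStimeyoungKernell} and \ref{stimalrKlemma} are exactly what make the Schur argument work. So in all cases $\mathcal K(s)\le c\,s^{-1/2}$.

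\textbf{Step 2: the case $p\in(n,2n)$.} Choose $X=L^{q/2}$ with $q\in(n,2n)$, so that $q/2>1$. By Hölder's inequality, $\dm |u|^2\dm_{q/2}\le\dm u\dm_q^2$. By \eqref{andamento_sol_q}, $\dm u(\tau)\dm_q\le C(u_0)\,\tau^{-\frac12+\frac n{2q}}$ on $(0,T)$, and $K(t,\rho)$ is bounded there. Hence
$$\dm\overline u(t)\dm_{q/2}\le c(u_0)\int_0^t(t-\tau)^{-\frac12}\tau^{-1+\frac nq}\,d\tau=c(u_0)\,t^{-\frac12+\frac nq},$$
which is \eqref{prop_L_parte_non_lineare}.

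\textbf{Step 3: the cases $p\ge 2n$.} Use the pointwise part of \eqref{andamento_sol_q}:
$$|u(\tau,x)|^2\le \Big[\sup_{(0,\tau)}s^{\frac n{2p}}\dm u(s)|x|^\alpha\dm_\infty\Big]^2\tau^{-\frac np}|x|^{-2\alpha}.$$
Since $|x|^{-2\alpha}\in L^{\frac n{2\alpha},\infty}$ and $\frac n{2\alpha}=\frac{np}{2(p-n)}$, Step 1 gives
$$\dm\overline u(t)\dm_{(\frac{np}{2(p-n)},\infty)}\le c\Big[\sup_{(0,t)}s^{\frac n{2p}}\dm u(s)|x|^\alpha\dm_\infty\Big]^2\int_0^t(t-\tau)^{-\frac12}\tau^{-\frac np}\,d\tau.$$
The time integral equals $c\,t^{\frac12-\frac np}$.
\begin{itemize}
\item For $p>2n$ this yields \eqref{proprieta_L_weak_parta_non_lineare}.
\item For $p=2n$ the exponent $\frac12-\frac np$ vanishes and the target space is $L^{n,\infty}$. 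The bracket tends to $0$ as $t\to0^+$ by \eqref{limite_linfinito_soluzione}, which gives the limit in \eqref{proprieta_L_weak_parta_non_lineare1}.
\end{itemize}

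\textbf{Step 4: $u(t)\in L^{n,\infty}$ when $p=2n$.} Write $u=\mathcal G[u_0]-\overline u$; Step 3 already controls $\overline u$. For the linear part, note first that $u_0=(u_0\texttt w)\,\texttt w^{-1}$. Here $u_0\texttt w\in L^{2n}$, and $\texttt w^{-1}\in L^{2n,\infty}$: near each $\bar x_j$ the singularity is $|x-\bar x_j|^{-\alpha_j}$ with $\alpha_j\le\frac12$, and at infinity $\texttt w^{-1}\sim|x|^{-1/2}$. By Hölder's inequality in Lorentz spaces, $u_0\in L^{n,2n}\subset L^{n,\infty}$. The estimate \eqref{SPLS}, extended by density and by the Helmholtz decomposition in Lorentz spaces of \cite{yamazaki}, then gives $\dm\mathcal G[u_0](t)\dm_{(n,\infty)}\le c\dm u_0\dm_{(n,\infty)}$. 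This completes the plan for \eqref{proprieta_L_weak_parta_non_lineare1}.
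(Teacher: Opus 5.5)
Your argument is correct in substance. For $p\in(n,2n)$ it is the paper's argument: Minkowski plus Young with $\|\nabla G(t-\tau)\|_1$, then the Beta integral. For $p\ge 2n$ you take a different route. The paper stays pointwise. It inserts $|u(\tau,y)|^2\le C\tau^{-\frac np}|y|^{-2\alpha}$ into the kernel bounds and, splitting $|y|<|x|/2$ and $|y|>|x|/2$ as in Lemma~\ref{PPNLF}, gets $|\overline u(t,x)|\le c(u_0)\,|x|^{-2\alpha}t^{\frac12-\frac np}$ (for $p=2n$, $|\overline u(t,x)|\le c(u_0)|x|^{-1}$). It then reads off the weak norm from $|x|^{-2\alpha}\in L^{\frac{np}{2(p-n)},\infty}$. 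You instead push $|x|^{-2\alpha}$ through the solution operator, using an $L^{r,\infty}$ operator bound obtained by real interpolation. The paper's route gives the stronger pointwise decay of $\overline u$ and needs only pointwise kernel estimates. Yours isolates one estimate, $\|T_s\|_{L^{r,\infty}\to L^{r,\infty}}\le c\,s^{-1/2}$, that treats all three cases uniformly and works for any source in a weak Lebesgue space. The price is that you need $L^r$-boundedness of non-convolution kernels. Your Step 4 goes beyond the paper: its proof only bounds $\|\overline u(t)\|_{(n,\infty)}$. The $u$ in the statement is presumably $\overline u$, and only $\overline u$ is used in Lemma~\ref{Dv} and Theorem~\ref{unicita}.

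Two points need repair, neither fatal.

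\emph{Step 1 at $r=1$.} The two-sided Schur test cannot be run at $r=1$ for the $D_{y_n}G^*$ component. Here \eqref{SPNI} only gives $c\,s^{-1/2}(|x-y^*|^2+s)^{-n/2}e^{-\hat a y_n^2/s}$, whose integral in $x$ over $\R^n_+$ diverges logarithmically. So the $x$-side $L^1$ bound, i.e.\ \eqref{StimeyoungKernelly} with $r=1$, does not follow from the pointwise estimates. The Gaussian parts, $D_{y'}G^*$, and $D^2_{x'}K$ via \eqref{SKKS} are fine. You never need $r=1$, though:
\begin{itemize}
\item[(i)] Apply Young's inequality in the tangential variables; the kernel has $L^1(\R^{n-1})$-norm $\le c(x_n^2+s)^{-1/2}$.
\item[(ii)] Apply H\"older in $y_n$ against the Gaussian, which contributes a factor $c\,s^{\frac1{2r'}}$.
\item[(iii)] Use $\int_0^\infty(x_n^2+s)^{-\frac r2}dx_n=c_r\,s^{\frac{1-r}2}$, valid for $r>1$.
\end{itemize}
Together these give $\|T_sf\|_r\le c_r\,s^{-1/2}\|f\|_r$ for every $r\in(1,\infty]$. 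Interpolating between two such exponents gives the $L^{r,\infty}$ bound for $r>1$. That is all you use, since your target exponents exceed $n/2>1$. The same computation repairs the use of $\|\nabla G\|_1$ in the case $p\in(n,2n)$, because $q/2>1$.

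\emph{Step 4.} You cannot extend \eqref{SPLS} to $L^{n,\infty}$ by density: $\mathscr C_0(\R^n_+)$ is not dense there, and \eqref{SPLS} only covers second index $r<\infty$. Use the space you already reached instead. Since $L^{2n}_{\texttt{w}}\hookrightarrow L^{n,2n}$ continuously, the $\mathscr C_0$-approximants of $u_0$ converge in $L^{n,2n}$. Apply \eqref{SPLS} in $L^{n,2n}$, then use $L^{n,2n}\subset L^{n,\infty}$.
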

\begin{proof}
Let $p\in (n,2n)$ and fix $q\in(n,2n)$. By estimate \eqref{andamento_sol_q}, the solution satisfies
\[
\dm u(t)\dm_q \leq c\, t^{-\frac{1}{2}+\frac{n}{2q}} \dm u_0\dm_{\texttt{w},p}.
\]
We now turn to the nonlinear contribution. By combining Minkowski's inequality with Young inequality, we have
\[
\dm \overline{u}(t)\dm_{\frac{q}{2}}
\leq c \int_0^t \dm \nabla G(t-\tau)\dm_1 \, \dm u(\tau)\dm_q^2 \, d\tau.
\]
Substituting the previous bound on $\dm u(\tau)\dm_q$ into the integral, we obtain
\[
\dm \overline{u}(t)\dm_{\frac{q}{2}}
\leq c\, \dm u_0\dm_{\texttt{w},p}^2 \int_0^t \tau^{-1+\frac{n}{q}} (t-\tau)^{-\frac{1}{2}}\, d\tau.
\]
A direct evaluation of the time integral yields
\[
\dm \overline{u}(t)\dm_{\frac{q}{2}} \leq c(u_0)\, t^{-\frac{1}{2}+\frac{n}{q}}.
\]
Consider now the case $p=2n$. Exploiting the pointwise bound given by \eqref{andamento_sol_q}, we estimate the nonlinear term as follows:
\[
|\overline{u}(t,x)|
\leq \frac{c(\dm u_0\dm_{\texttt{w},p})}{|x|}
\int_0^t \tau^{-\frac{1}{2}}(t-\tau)^{-\frac{1}{2}}\, d\tau
\leq c(u_0)\,|x|^{-1},
\]
for all $t\in(0,T)$. This immediately yields the uniform bound
\[
\dm \ov{u}(t)\dm_{(n,\infty)} \leq c(u_0), \qquad t\in(0,T).
\]
We next investigate the behavior as $t\to 0^+$. Fix $\varepsilon>0$ and let $\{t_m\}$ be a sequence with $t_m\to 0^+$. By \eqref{limite_linfinito_soluzione}, there exists $\overline{m}$ such that for all $m>\overline{m}$,
\[
|u(t_m,x)|
\leq \frac{c(\dm u_0\dm_{\texttt{w},p})\varepsilon}{t_m^{\frac{1}{4}}|x|^{\frac{1}{2}}}
.
\]
Inserting this estimate into the representation of $\overline{u}$ and arguing as above, we obtain
\[
|\overline{u}(t_m,x)|
\leq \frac{c(\dm u_0\dm_{\texttt{w},p})\varepsilon}{t_m^{\frac{1}{4}}|x|^{\frac{1}{2}}} \int_0^{t_m} \tau^{-\frac{1}{2}}(t_m-\tau)^{-\frac{1}{2}}\, d\tau
\leq c(u_0)\,\varepsilon\,|x|^{-1},
\]
for all $m>\overline{m}$. Consequently,
\[
\dm u(t_m)\dm_{(n,\infty)} \leq c(u_0)\,\varepsilon, \qquad m>\overline{m}.
\]
Passing to the limit as $m\to\infty$, we conclude \eqref{proprieta_L_weak_parta_non_lineare1}.\\
Finally, if $p>2n$, employing again the pointwise estimate,  we arrive at
\[|\overline{u}(t,x)|\leq \sfrac{c(\dm u_0\dm_{\texttt{w},p})}{|x|^{2-\frac{2n}{p}}}\int_0^t\!\!{\tau^{-\frac{n}{p}}(t-\tau)^{-\frac{1}{2}}}\,d\tau\leq {c(u_0)}{|x|^{-2+\frac{2n}{p}}}\,t^{\frac{1}{2}-\frac{n}{p}}\]
that leads to estimate \eqref{proprieta_L_weak_parta_non_lineare}.
\end{proof}

 \begin{proof}[Proof of Theorem\, \ref{esistenza}]
In the hypotheses of Theorem \ref{esistenza}, by virtue of Lemma \ref{lemmaiterativo} and Lemma \ref{convergenza}, we establish a divergence free solution $u(t,x)$ to the integral equation \eqref{Soluzione} such that \eqref{andamento_sol_q} and \eqref{limite_linfinito_soluzione} hold. To the solution $u$, one can associate a pressure term $\pi_u$ given by the representation formula \eqref{Soluzionepres}.
\par
In order to prove the estimate \eqref{stime_pressione_thm_principale} for the pressure term, we argue as follows. \par
Let us fix \( \varepsilon > 0 \) and consider the following system:
\begin{equation}\label{pres1}
\begin{array}{ll}
v_t - \Delta v = -  \nabla \pi_v \,, & \text{in } (\varepsilon,\infty) \times \mathbb{R}^n_{+}, \\
\nabla \cdot v = 0\,, & \text{in } (\varepsilon,\infty) \times \mathbb{R}^n_{+}, \\
v_{| x_n=0} = 0\,, & \\
v(\varepsilon,x) = u(\varepsilon,x)\,, & \text{in } \{\varepsilon\} \times \mathbb{R}^n_{+},
\end{array}
\end{equation}
where $u$ is the solution constructed in Lemma \ref{convergenza}. \par
 Since $u(\varepsilon)\in L^q(\R^n_+)$ for all $q>n$, the well known $L^q$-theory for system \eqref{pres1} ensures the existence of a unique regular solution $(v,\pi_v)$. In particular, the following estimate hold
\begin{equation}\label{stimapresswellknow}
    \dm \n \pi(t)\dm_q \leq (t-\varepsilon)^{-1}\dm u(\varepsilon)\dm_q\,.
\end{equation}
Using \eqref{andamento_sol_q} and \eqref{stimapresswellknow}, for all $\eta>\varepsilon$ and $r>1$,  we have
\begin{equation}\label{stimapres1}
  \int_{\eta}^T\dm \n \pi_v(\tau) \dm_q^r \,d\tau \leq \int_{\eta}^T (\tau-\varepsilon)^{-r}\dm u(\varepsilon)\dm_q^r\,d\tau \leq c_1(r,\varepsilon,\eta, T)\left(   K(\varepsilon,\rho)+  c\dm u_0\dm _{\texttt{w},p}^{1-\gamma}K(\varepsilon,\rho)^{\gamma}\right)^r\,.
\end{equation}
where $K$ is defined in \eqref{krho}.\par
Moreover, since, in particular, for all $\varepsilon >0$, $r>1$ and $q>n$, we have $u\cdot \n u \in L^r(\varepsilon,T;L^q (\R^n_+))$, by the theory developed in \cite{MSolani}, there exists a unique smooth solution $(\bar{w},\pi_{\bar{w}})$ to the problem 
\begin{equation}\label{pres2}
\begin{array}{ll}
\ov{w}_t + \nabla \pi_{\ov{w}} = \Delta \ov{w} -u\cdot \n u\,, & \text{in } (\varepsilon,\infty) \times \mathbb{R}^n_{+}, \\
\nabla \cdot \ov{w} = 0\,, & \text{in } (\varepsilon,\infty) \times \mathbb{R}^n_{+}, \\
\ov{w}_{| x_n=0} = 0\,, & \\
\ov{w}(\varepsilon,x) = 0\,, & \text{in } \{\varepsilon\} \times \mathbb{R}^n_{+},
\end{array}
\end{equation}
such that
\begin{equation*}
 \int_{\eta}^T\dm \n \pi_{\bar{w}}(\tau) \dm_q^r \,d\tau \leq \int_{\eta}^T \dm u\cdot \n u(\tau)\dm_q^r\, d\tau\,,
\end{equation*}
for all $\eta>0$. Applying Holder inequality and using \eqref{andamento_sol_q}, from the last relation we deduce that
\begin{equation}\label{stimapres2}
 \int_{\eta}^T\dm \n \pi_{\bar{w}}(\tau) \dm_q^r \,d\tau \leq c_2(r,\eta, T)\left(   K(\varepsilon,\rho)+  c\dm u_0\dm _{\texttt{w},p}^{1-\gamma}K(\varepsilon,\rho)^{\gamma}\right)^{2r}\,,
\end{equation}
for all $\eta>0$.\par
Now, setting $w:=v+\ov{w}$, and $\pi_w:=\pi_v+\pi_{\ov{w}}$, the couple $(w, \pi_w)$ solves the system 
$$\begin{array}{ll}
w_t  - \Delta w  = - \nabla \pi_{w} -u\cdot \n u\,, & \text{in } (\varepsilon,\infty) \times \mathbb{R}^n_{+}, \\
\nabla \cdot w = 0\,, & \text{in } (\varepsilon,\infty) \times \mathbb{R}^n_{+}, \\
w_{| x_n=0} = 0\,, & \\
w(\varepsilon,x) = u(\varepsilon, x)\,, & \text{in } \{\varepsilon\} \times \mathbb{R}^n_{+}\,.
\end{array}$$
Multiplying the first equation of the last system by $\varphi\in C([0,T);J^{q'}(\R^n_+))\cap L^{q'}(0,T;W^{2,q'}(\R^n_+))$ with $\varphi_t\in L^{q'}(0,T;L^{q'}(\R^n_+))$, since $\displaystyle\lim_{s\to \varepsilon^+}(w(s)-u(\varepsilon), \varphi(s)) =0$, after an integration by parts, we have
\begin{align}
 (w(t),\varphi(t))-(u(\varepsilon),\varphi(\varepsilon))+\int_\varepsilon^t ( u\cdot \n u(\tau), \varphi(\tau)) \,d\tau=0  \label{weeq}
\end{align}
By same arguments, since the solution $u$ to the integral equation \eqref{Soluzione} is also a classical solution (see Remark\,\ref{R-I}), we have
\begin{align}
(u(t),\varphi(t))-(u(\varepsilon),\varphi(\varepsilon))+\int_\varepsilon^t (u\cdot \n u(\tau), \varphi(\tau)) \,d\tau=0 \,.\label{weequ}
\end{align}
Subtracting \eqref{weequ} from \eqref{weeq}, we obtain 
\be \label{diff}
( w(t)-u(t), \varphi(t))=0\,, 
\ee
Now, for $\tau \in (\varepsilon,t)$, let $\varphi(\tau,x)\equiv\Phi(t-\tau+\varepsilon,x)$ be solution backward in time to the Stokes problem \eqref{Stokesf} with $f=0$ corrisponding to the initial datum $\Phi(\varepsilon)\in\mathscr{C}_0(\mathbb{R}^n_+)$. Considering this function and substituting it into formula \eqref{diff}, we have
\[( w(t)-u(t), \Phi(\varepsilon))=0 \,,\]
from which, one deduces that $u\equiv w$. In particular $\pi_u\equiv \pi_w$. Then, by adding estimates
\eqref{stimapres1} and \eqref{stimapres2}, we obtain estimate \eqref{stime_pressione_thm_principale}. The result is completely proved.
\end{proof}
In the following Lemma we prove that any solution, say $v(t,x)$, in our class of existence admits the  decomposition $v(t,x)=\mathcal{G} [v_0] + \mathcal{S} [v\cdot \n v]$. This result allows us to establish uniqueness by focusing only on the coincidence of the nonlinear part, since the linear components already coincide.
\begin{lemma}\label{Dv}
Let $v(t,x)$ a solution to \eqref{problem} corresponding to $v_0$. Suppose that $v(t,x)$ satisfies estimate \eqref{andamento_sol_q} and property \eqref{weak_convergence1}, then $v$ admits the decomposition:
\begin{equation}
v=v^0+\ov v\,,
\end{equation}
where $v^0=\mathcal{G} [v_0]$ and $\ov v=\mathcal{S} [v\cdot \n v] $. 
\end{lemma}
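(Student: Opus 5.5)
Throughout, $v$ is regarded as a classical solution on $(\eta,T)\times\R^n_+$ for every $\eta>0$, as in Remark\,\ref{R-I}. Set $v^0:=\mathcal G[v_0]$, which is well defined by Theorem\,\ref{Theoremprincipale}, and $\ov v:=\mathcal S[v\cdot\n v]$, which is well defined because $v$ satisfies \eqref{andamento_sol_q}, so Lemmas\,\ref{stimaq}--\ref{PPNLF} apply with $a=b=v$. The aim is to show that $z:=v-v^0-\ov v$ vanishes. Both $v$ and $v^0+\ov v$ solve, on $(s,T)$, the Stokes system \eqref{Stokesf} with body force $f=-v\cdot\n v$ and zero boundary values. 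Hence $z$ solves the homogeneous Stokes IBVP on $(\eta,T)$ for every $\eta>0$. The only information lost is the behaviour at $t=0$, which is controlled through \eqref{weak_convergence1} and the limits of $v^0$ and $\ov v$.

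The argument is a duality argument. Fix $t\in(0,T)$ and $\Phi_0\in\mathscr C_0(\R^n_+)$, and let $\varphi(\tau)=\Phi(t-\tau)$ be the backward solution given by Lemma\,\ref{SHE}. Testing the equation of $z$ with $\varphi$ on $(s,t)$ and integrating by parts gives
\[(z(t),\Phi_0)=(z(s),\Phi(t-s)),\qquad 0<s<t.\]
The same integral identity used in the proof of Theorem\,\ref{esistenza} (see \eqref{weeq}, \eqref{weequ}) justifies this, since for $s>0$ all terms are smooth and in $L^q$, $q>n$. It then remains to let $s\to0^+$ and show that the right-hand side tends to $0$, which gives $z(t)=0$ by density of $\mathscr C_0$.

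To pass to the limit we write $(z(s),\Phi(t-s))=(v(s)-v_0,\Phi(t-s))-(v^0(s)-v_0,\Phi(t-s))-(\ov v(s),\Phi(t-s))$ and treat the three pieces separately.
\begin{itemize}
\item The second term tends to $0$ by \eqref{LP}, because $\Phi(t-s)$ stays bounded in $L^{p'}_{\texttt{w}'}$. This can be obtained from the duality of \eqref{smgP}.
\item For the first term, we split $\Phi(t-s)=\Phi(t)+(\Phi(t-s)-\Phi(t))$. The fixed part is handled by \eqref{weak_convergence1}, since $\Phi(t)\in X(p)$ by Lemma\,\ref{SHE} and \eqref{SPLS}. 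The difference is controlled by the uniform bound $\sup_{s}\dm v(s)\dm_{\texttt{w},p}<\infty$, or alternatively by the $L^q$ and Lorentz bounds of $v$, together with the continuity of $\Phi$ in $X(p)$.
\item The third term tends to $0$ by Lemma\,\ref{andamento_a_0_parte_non_lineare}. For $p\in(n,2n)$ one pairs $L^{q/2}$ with $L^{(q/2)'}$, and $\dm\ov v(s)\dm_{q/2}\le cs^{-1/2+n/q}\to0$ because $q<2n$. For $p=2n$ one pairs $L^{n,\infty}$ with $L^{\frac n{n-1},1}$ and uses \eqref{proprieta_L_weak_parta_non_lineare1}. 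For $p>2n$ one pairs the weak space $L^{\frac{np}{2(p-n)},\infty}$ with $L^{\frac{np}{p(n-2)+2n},1}$ and uses \eqref{proprieta_L_weak_parta_non_lineare}; the exponents are conjugate.
\end{itemize}
This is exactly why $X(p)$ is defined as it is.

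The main obstacle is the first term. Hypothesis \eqref{weak_convergence1} only gives convergence against a fixed test function, while $\Phi(t-s)$ moves with $s$. I would try first to get the uniform bound $\sup_{s<t}\dm v(s)\dm_{\texttt{w},p}<\infty$. This follows from $v=v^0+\ov v$ for $s>0$ if one first runs the argument on $(\eta,t)$, or, avoiding circularity, from the $L^q$ and Lorentz bounds in the $X(p)$-dual norms. If this fails, I would instead take $\varphi$ to be the backward Stokes solution from time $t$ evaluated at time $s$, and use the strong continuity of $\Phi$ in $L^{p'}_{\texttt{w}'}\cap L^{r,1}$ from Lemma\,\ref{SHE} together with the Banach–Steinhaus uniform bound implied by weak convergence.
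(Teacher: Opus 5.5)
Your overall architecture matches the paper's, but the argument closes only through the fallback you mention at the end. The route you put first for the key step does not work. Your $z$ is the paper's $\ov w$, your duality identity is \eqref{CdR}, and you dispose of the nonlinear piece exactly as the paper does, via Lemma~\ref{andamento_a_0_parte_non_lineare} and the conjugate Lebesgue/Lorentz exponents that define $X(p)$.

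The problem is the step you flag yourself: the moving test function in $(v(s)-v_0,\Phi(t-s))$. Nothing in \eqref{andamento_sol_q} controls $\dm v(s)\dm_{\texttt{w},p}$.
\begin{itemize}
\item Since $\texttt{w}\sim|x|^{\alpha}$ at infinity, the weighted pointwise bound only gives $|v(s,x)|^p\texttt{w}^p(x)\lesssim s^{-n/2}$ there, which is not integrable.
\item $\dm v(s)\dm_q\le Cs^{-\frac12+\frac n{2q}}$ blows up as $s\to0$, so the $L^q$ bounds give no uniform bound in the dual of $X(p)$ either.
\item Lorentz bounds are known only for $\ov v$, not for $v$.
\item Going through $v=v^0+\ov v$ is circular, and would not help anyway. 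Estimate \eqref{sppnl} only gives $|\ov v(s,x)|\lesssim|x|^{-2\alpha}$, and $|x|^{-2\alpha p}\texttt{w}^p\sim|x|^{-(p-n)}$ is not integrable at infinity when $p\le 2n$. This is exactly why the paper controls $\ov v$ in $L^{q/2}$ or weak Lorentz norms rather than in $L^p_{\texttt{w}}$.
\end{itemize}

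Your Banach--Steinhaus fallback does close the step, provided you run it on a Banach space on which every $v(s)$ is a bounded functional. Take $Y:=X(p)\cap L^{q'}(\R^n_+)$ with $q>n$.
\begin{itemize}
\item $v(s)-v_0\in Y^*$ because $v(s)\in L^q$. For $p\ge 2n$ this intersection matters: the bounds \eqref{andamento_sol_q} alone do not place $v(s)$ in the dual of $X(p)$.
\item \eqref{weak_convergence1} along every sequence $s_k\to0$ then yields $\sup_{0<s<s_0}\dm v(s)-v_0\dm_{Y^*}<\infty$.
\item For fixed $t>0$, $\Phi(t-s)\to\Phi(t)$ in $Y$, which finishes the step.
\end{itemize}

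The paper uses a quantitative device that needs no uniform bound at all. It keeps $w=v-v^0$ together and splits $(\ov w(s),\psi(t-s))=(\ov w(s),\psi(t-s)-\psi(t))+(\ov w(s),\psi(t))$. It then estimates
$|(w(s),\psi(t-s)-\psi(t))|\le\dm w(s)\dm_q\dm\psi(t-s)-\psi(t)\dm_{q'}\le c\,s^{-\frac12+\frac n{2q}}\cdot s=c\,s^{\frac12+\frac n{2q}}$,
using \eqref{andamento_sol_q}, \eqref{RP}, and the boundedness of $\partial_\tau\psi$ in $L^{q'}$ near $\tau=t>0$. The piece $(\ov v(s),\psi(t-s)-\psi(t))$ tends to zero by Lemma~\ref{andamento_a_0_parte_non_lineare}. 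Only the fixed test function $\psi(t)$ is then paired with \eqref{weak_convergence1}, \eqref{LP} and the same lemma. This is more elementary than Banach--Steinhaus. It also spares you the uniform boundedness of $\Phi(t-s)$ in $L^{p'}_{\texttt{w}'}$ that your separate treatment of $(v^0(s)-v_0,\Phi(t-s))$ requires.
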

\begin{proof}
Let $v^0:=\mathcal{G} [v_0]$ and set $w:=v-v^0$. We also introduce $\overline{v}$ as the solution to
\begin{equation}\label{Svs}
\begin{cases}
\overline{v}_t-\Delta \overline{v}+\nabla \pi_{\overline{v}}=-v\cdot \nabla v & \text{in } (0,\infty)\times \R^n_+,\\
\nabla\cdot \overline{v}=0 & \text{in } (0,\infty)\times \R^n_+,\\
\overline{v}(0,x)=0 & \text{on } \{0\}\times \R^n_+.
\end{cases}
\end{equation}
It is well known that $\overline{v}$ admits the representation
\[
\overline{v}(t,x)=\int_0^t \nabla G(t-\tau,x-y)\, v\otimes v(\tau,y)\, d\tau.
\]
A direct verification shows that $w$ solves \eqref{Svs} as well. Hence, defining $\overline{w}:=w-\overline{v}$, we obtain
\[
\overline{w}_t-\Delta \overline{w}+\nabla \pi_{\overline{w}}=0,
\]
with $\pi_{\overline{w}}=\pi_v-\pi_{\overline{v}}$.\\
Let $\psi$ be the solution to \rf{Stokesab} with initial datum $\psi_0\in \mathscr C_0(\R^n_+)$ and vanishing coefficients. For fixed $t>0$, define $\widehat{\psi}(\tau,x):=\psi(t-\tau,x)$ for $(\tau,x)\in(0,t)\times\R^n_+$. Then $\widehat{\psi}$ is a backward solution enjoying properties \rf{RHE}--\rf{SP}. In particular, for all $0<s<t$, we have
\begin{equation}\label{CdR}
(\overline{w}(t),\psi_0)=(\overline{w}(s),\psi(t-s)).
\end{equation}
We now estimate the right-hand side of \eqref{CdR}. Writing
\[
(\overline{w}(s),\psi(t-s))=(\overline{w}(s),\psi(t-s)-\psi(t))+(\overline{w}(s),\psi(t)),
\]
we treat the two terms separately.
\\
\noindent\textit{Case $p\in(n,2n)$.}
Using H\"older inequality together with the bounds on $w$ and $\overline{v}$, we obtain
\begin{align*}
|(\overline{w}(t),\psi_0)|
&\leq \dm w(s)\dm_q \dm \psi(t-s)-\psi(t)\dm_{q'}
+ \dm \overline{v}(s)\dm_{\frac{q}{2}} \dm \psi(t-s)-\psi(t)\dm_{\frac{q}{q-2}} \\
&\quad + |(\overline{w}(s),\psi(t))| \\
&\leq c(v_0)c(\psi_0)\, s^{\frac{1}{2}+\frac{n}{2q}}
+ c(\psi_0)\, \dm \overline{v}(s)\dm_{\frac{q}{2}}
+ |(\overline{w}(s),\psi(t))|.
\end{align*}
Passing to the limit as $s\to 0$ and using \eqref{andamento_sol_q} for $w$ together with Lemma~\ref{andamento_a_0_parte_non_lineare} for $\overline{v}$, we deduce
\[
(\overline{w}(t),\psi_0)=0 \quad \text{for all } \psi_0\in \mathscr C_0(\R^n_+).
\]

\medskip
\noindent\textit{Case $p=2n$.}
Proceeding analogously, but using weak Lebesgue norms, we obtain
\begin{align*}
|(\overline{w}(t),\psi_0)|
&\leq \dm w(s)\dm_q \dm \psi(t-s)-\psi(t)\dm_{q'}
+ \dm \overline{v}(s)\dm_{(n,\infty)} \dm \psi(t-s)-\psi(t)\dm_{(n',1)} \\
&\quad + |(\overline{w}(s),\psi(t))| \\
&\leq c(v_0)c(\psi_0)\, s^{\frac{1}{2}+\frac{n}{2q}}
+ \dm \psi_0\dm_{(n',1)}\, \dm \overline{v}(s)\dm_{(n,\infty)}
+ |(\overline{w}(s),\psi(t))|.
\end{align*}
Letting $s\to 0$ and using again the convergence properties of $w$ and $\overline{v}$, we conclude as before.

\medskip
\noindent\textit{Case $p>2n$.}
In this case, we employ the corresponding weak estimates and obtain
\begin{align*}
|(\overline{w}(t),\psi_0)|
&\leq \dm w(s)\dm_q \dm \psi(t-s)-\psi(t)\dm_{q'}
+ \dm \overline{v}(s)\dm_{(\frac{np}{2(p-n)},\infty)}
\dm \psi(t-s)-\psi(t)\dm_{(\frac{np}{p(n-2)+2n},1)} \\
&\quad + |(\overline{w}(s),\psi(t))| \\
&\leq c(v_0)c(\psi_0)\, s^{\frac{1}{2}+\frac{n}{2q}}
+ c\, \dm \overline{v}(s)\dm_{(\frac{np}{2(p-n)},\infty)}
\dm \psi_0\dm_{(\frac{np}{p(n-2)+2n},1)}
+ |(\overline{w}(s),\psi(t))|.
\end{align*}
Passing to the limit as $s\to 0$ yields again $(\overline{w}(t),\psi_0)=0$.
\\
Since $\psi_0$ is arbitrary, we conclude that $\overline{w}\equiv 0$, hence $w=\overline{v}$ and therefore
\[
v=v^0+\overline{v}.
\]
\end{proof}
\begin{proof}[Proof of Theorem \ref{unicita}] The proof of uniqueness relies on a comparison between two solutions \(u(t,x)\) and \(v(t,x)\). Here, \(u\) denotes the solution provided by Theorem~\ref{esistenza}, while \(v\) is assumed to enjoy the same regularity properties. 

According to Lemma~\ref{Dv}, the function \(v\) admits the decomposition \(v = u^0 + \overline{v}\). Depending on the value of \(p\), the remainder term \(\overline{v}\) fulfills the properties stated in \rf{prop_L_parte_non_lineare}, \rf{proprieta_L_weak_parta_non_lineare1}, and \rf{proprieta_L_weak_parta_non_lineare}. 

The uniqueness argument is then carried out via a duality approach, inspired by \cite{F}, with adjustments to fit the present framework.\par
Define \(w(t,x) := v(t,x) - u(t,x)\), which can also be expressed as 
$w(t,x) = \overline{v}(t,x) - \overline{u}(t,x),$
and set \(\pi_w := \pi_v - \pi_u\). 
Taking into account the regularity properties of \(w\) for \(t>0\), it follows that for any \(t > s > 0\), the pair \((w,\pi_w)\) satisfies the following integral formulation:
\begin{equation}\label{WFFW}\ba{ll}
(w(t),\phi(t))\hskip-0.2cm&\displ= (w(s),\phi(s))+ \int_s^t \big[(w,\phi_\tau+\Delta\phi)\VSE\hskip3.2cm+(w\cdot \nabla \phi  ,\overline{v})+(\overline{u} \cdot \nabla \phi,w)+(u^0\cdot \nabla \phi,w)+w\cdot\nabla\phi,u^0\big]d\tau\,,\ea
\end{equation} for all $\phi\in C([0,T);J^{q'}(\R^n_+))\cap L^{q'}(0,T;W^{2,q'}(\R^n_+))$ with $\phi_t\in L^{q'}(0,T;L^{q'}(\R^n_+))$, and $t>s>0$\,. Let \(\psi\) denote the solution to problem \rf{Stokesf} corresponding to the initial datum \(\psi_0 \in \mathscr C_0(\R^n_+)\) and vanishing forcing term \(f \equiv 0\). For any fixed \(t>0\), define the function
\[
\widehat{\psi}(\tau,x) := \psi(t-\tau,x), \qquad (\tau,x) \in (0,t)\times \R^n_+.
\]
With this definition, \(\widehat{\psi}\) solves the associated problem backward in time over \((0,t)\times \R^n_+\), and satisfies the condition \(\widehat{\psi}(t,x) = \psi_0(x)\). Moreover, it fulfills properties \rf{RHE}--\rf{SP}. 

As a consequence, for every \(t > s > 0\), the function \(w\) verifies the following relation:
\begin{equation}\label{ultima}
(w(t),\psi_0)= (w(s),\widehat{\psi}(s))+ \int_s^t [(w\cdot \nabla \widehat{\psi}  ,\overline{v})+(w\cdot\n \widehat \psi, u^0)+(\overline{u} \cdot \nabla \widehat{\psi},w)+(u^0\cdot \nabla \widehat{\psi},w) ]\,d\tau.
\end{equation}
In order to apply Lemma~\ref{andamento_a_0_parte_non_lineare}, we proceed by considering three distinct cases:
\begin{itemize}
\item[1)]Let \(u_0 \in L^p_{\texttt{w}}(\R^n_+)\) with \(p \in (n,2n)\). We select an exponent \(\overline{q} \in (n,2n)\) so as to apply \rf{prop_L_parte_non_lineare} with exponent \(\frac{\overline{q}}{2}\) to both solutions. As a consequence, we obtain in particular
\[
\lim_{s \to 0} \dm  w(s) \dm _{\frac{\overline{q}}{2}} = 0.
\]
Combining this fact with \eqref{ultima} and the estimate for 
\(\dm  \nabla \widehat{\psi} \dm _{\frac{\overline{q}}{\overline{q} - 2}}\), we deduce that
\be\label{ultima2}\ba{ll}
|(w(t),\psi_0)|\hskip-0.2cm&\displ\leq |(w(s),{\psi}(t-s)| + c\sup_{(s,t)}[\tau^{\frac{1}{2}}(\dm \overline{u}(\tau)\dm _{\infty}+ \dm \overline{v}(\tau)\dm _{\infty}+\dm u^0(\tau)\dm _{\infty})] \notag\\
&\displ\hskip2cm\times \sup_{(s,t)}\dm w(\tau)\dm _{\frac{\overline{q}}{2}}\int_s^t \tau^{-\frac{1}{2}}\dm \nabla \widehat{\psi}\dm _{\frac{\overline{q}}{\overline{q}-2}}\,d\tau \notag\\
&\displ\leq \dm w(s)\dm _{\frac{\overline{q}}{2}}\dm {\psi_0}\dm _{\frac{\overline{q}}{\overline{q}-2}} + c\sup_{(s,t)}[\tau^{\frac{1}{2}}(\dm \overline{u}(\tau)\dm _{\infty}+ \dm \overline{v}(\tau)\dm _{\infty}+\dm u^0(\tau)\dm _{\infty})] \notag\\
&\displ\hskip2cm\times\dm \psi_0\dm _{\frac{\overline{q}}{\overline{q}-2}}\sup_{(s,t)}\dm w(\tau)\dm _{\frac{\overline{q}}{2}}\int_s^t \tau^{-\frac{1}{2}}(t-\tau)^{-\frac{1}{2}}\,d\tau\,,
\ea\ee
for all $t \in [0,T(u)) \cap [0,T(v))$. Since $\psi_0$ is arbitrary, letting $s \to 0$, we obtain
\[ \dm w(t)\dm _{\frac{\overline{q}}{2}}\leq  c\sup_{(0,t)}[\tau^{\frac{1}{2}}(\dm \overline{u}(\tau)\dm _{\infty}+ \dm \overline{v}(\tau)\dm _{\infty}+\dm u^0(\tau)\dm _{\infty})]\sup_{(0,t)}\dm w(\tau)\dm _{\frac{\overline{q}}{2}}\]
The first limit condition in \eqref{limite_linfinito_soluzione} immediately implies uniqueness on a time interval of the form \((0,\delta]\). It remains to address the extension of uniqueness to the range \(t \geq \delta\).\par
Since for $t>0$ we have $w$ regular,  we are going to consider  formula  \eqref{ultima} with $s=\delta$. Since $\psi$ enjoys \rf{RHE}, and $\dm w(\delta)\dm _{\frac{\overline{q}}{2}}=0$, for all $r\in[\frac{\ov q}2,\infty)$, we deduce 
\be\label{WB} \dm w(t)\dm _{r}\leq  c\, \delta^{-\frac{1}{2}}\sup_{(\delta,t)}[\tau^{\frac{1}{2}}(\dm \overline{u}(\tau)\dm _{\infty}+ \dm \overline{v}(\tau)\dm _{\infty}+\dm u^0(\tau)\dm _{\infty})]\int_{\delta}^t \dm w(\tau
)\dm _{r}(t-\tau)^{-\frac{1}{2}}\,d\tau\,.\ee
Applying Lemma\,\ref{L-GWSI} with $h(\tau)=\dm w(\tau)\dm _{r}$, we obtain $\dm w(t)\dm _{r}=0$  for all   $t\in[\delta,T)$.

\item[2)] Let \(u_0 \in L^{2n}_{\texttt{w}}(\R^n_+)\). By \rf{proprieta_L_weak_parta_non_lineare1}, we study uniqueness with respect to the quasi-norm \(\dm  \cdot \dm _{(n,\infty)}\). 
Combining \eqref{ultima} with the estimate for 
\(\dm  \nabla \widehat{\psi} \dm _{(n',1)}\) given in \eqref{SPLS}, we obtain:
\be\label{ultima3}\ba{ll}
|(w(t),\psi_0)|\hskip-0.2cm&\displ\leq |(w(s),{\psi}(t-s)| + c\sup_{(s,t)}[\tau^{\frac{1}{2}}(\dm \overline{u}(\tau)\dm _{\infty}+ \dm \overline{v}(\tau)\dm _{\infty}+\dm u^0(\tau)\dm _{\infty})] \notag\\
&\displ\hskip2cm\times \sup_{(s,t)}\dm w(\tau)\dm _{(n,\infty)}\int_s^t \tau^{-\frac{1}{2}}\dm \nabla \widehat{\psi}\dm _{(n',1)}\,d\tau \notag\\
&\displ\leq \dm w(s)\dm _{(n,\infty)}\dm {\psi}_0\dm _{(n',1)} + c\sup_{(s,t)}[\tau^{\frac{1}{2}}(\dm \overline{u}(\tau)\dm _{\infty}+ \dm \overline{v}(\tau)\dm _{\infty}+\dm u^0(\tau)\dm _{\infty})] \notag\\
&\displ\hskip2cm\times\dm \psi_0\dm _{(n',1)}\sup_{(s,t)}\dm w(\tau)\dm _{(n,\infty)}\int_s^t \tau^{-\frac{1}{2}}(t-\tau)^{-\frac{1}{2}}\,d\tau\,,
\ea\ee
for all $t \in [0,T(u)) \cap [0,T(v))$. The arbitrariness of $\psi_0$ and Lemma\,\ref{HDLS} allow us to conclude that
\begin{align*}
\dm w(t)\dm _{(n,\infty)}\leq c\sup_{(0,t)}[\tau^{\frac{1}{2}}(\dm \overline{u}(\tau)\dm _{\infty}+ \dm \overline{v}(\tau)\dm _{\infty}+\dm u^0(\tau)\dm _{\infty})]\sup_{(0,t)}\dm w(\tau)\dm _{(n,\infty)}.
\end{align*}
The validity of the limit condition in \eqref{limite_linfinito_soluzione} guarantees uniqueness on an interval of the form \((0,\delta]\). Furthermore, for \(t>\delta\) we proceed as in the argument already used in \rf{WB}. In this situation, we observe that at time \(s=\delta\), and without loss of generality, one has \(\dm  w(\delta) \dm _{r} = 0\). Consequently, for any \(r \in (p,\infty)\), we define
\be \dm  w(t)\dm _{r} \leq  c\, \delta^{-\frac{1}{2}}\sup_{(\delta,t)}[\tau^{\frac{1}{2}}(\dm \overline{u}(\tau)\dm _{\infty}+ \dm \overline{v}(\tau)\dm _{\infty}+\dm u^0(\tau)\dm _{\infty})]\int_{\delta}^t \dm  w(\tau)\dm _{r} (t-\tau)^{-\frac{1}{2}}\,d\tau.\ee
Applying Lemma\,\ref{L-GWSI} with $h(\tau)=\dm w(\tau)\dm _{r}$, we obtain $\dm  w(t)\dm _{r}=0$\, for all  $t\in[\delta,T)$.
\item[3)]Let $u_0 \in L^p_{\texttt{w}}(\R^n_+)$ with $p>2n$. We consider the norm $\dm \cdot\dm _{(\frac{np}{2(p-n)},\infty)}$.
From \eqref{ultima},  we get:
\begin{align*}
|(w(t),\psi_0)|&\leq |(w(s),{\psi}(t-s)| + c\sup_{(s,t)}[\tau^{\frac{1}{2}}(\dm \overline{u}(\tau)\dm _{\infty}+ \dm \overline{v}(\tau)\dm _{\infty}+\dm u^0(\tau)\dm _{\infty})]\times \notag\\
&\hskip2cm\times \sup_{(s,t)}\dm w(\tau)\dm _{(\frac{np}{2(p-n)},\infty)}\int_s^t \tau^{-\frac{1}{2}}\dm \nabla \widehat{\psi}\dm _{(\frac{np}{np-2(p-n)},1)}d\tau \notag\\
&\leq \dm w(s)\dm _{(\frac{np}{2(p-n)},\infty)}\dm \widehat{\psi}(s)\dm _{(\frac{np}{np-2(p-n)},1)} + c\sup_{(s,t)}[\tau^{\frac{1}{2}}(\dm \overline{u}(\tau)\dm _{\infty}+ \dm \overline{v}(\tau)\dm _{\infty}+\dm u^0(\tau)\dm _{\infty})]\times \notag\\
&\hskip2cm\times\dm \psi_0\dm _{(\frac{np}{np-2(p-n)},1)}\sup_{(s,t)}\dm w(\tau)\dm _{(\frac{np}{2(p-n)},\infty)}\int_s^t \tau^{-\frac{1}{2}}(t-\tau)^{-\frac{1}{2}}\,d\tau\,.
\end{align*}
Taking into account property \eqref{proprieta_L_weak_parta_non_lineare} and repeating the same line of reasoning as in cases 1)--2), we conclude that
\[
\dm  w(t) \dm _{\left(\frac{np}{2(p-n)},\infty\right)} = 0 \qquad \text{for all } t \in (0,T).
\]
\end{itemize}

This completes the proof.
\end{proof}

 {\bf Acknowledgment} -
We would like to thank Professor P. Maremonti for his valuable supervision. \par
The paper is performed under the auspices of GNFM-INdAM.
\vskip0.1cm\noindent
 {\bf Declarations}
\vskip0.1cm\noindent
 {\bf Conflict of interest} - The authors have no conflicts of interest to declare that are relevant to the content of this article.


\end{document}